\newcommand{\C}{{\mathbb C}}
\newcommand{\W}{\mathcal{W}}
\newcommand{\I}{{\mathbb I}}
\newcommand{\D}{{\mathbb D}}
\renewcommand{\H}{{\mathbb H}}
\newcommand{\Hurw}{{\mathcal H}}
\newcommand{\Par}{\mathrm{Par}}
\newcommand{\Z}{{\mathbb Z}}
\newcommand{\R}{{\mathbb R}}
\newcommand{\N}{{\mathbb N}}
\renewcommand{\D}{{\mathbb D}}
\newcommand{\M}{\mathcal{M}}
\renewcommand{\L}{\mathcal{L}}
\newcommand{\Homeo}{\operatorname{Homeo}}
\newcommand{\id}{\operatorname{id}}
\renewcommand{\Re}{\operatorname{Re}}
\renewcommand{\Im}{\operatorname{Im}}
\renewcommand{\mod}{\mathrm{mod}}
\newcommand{\dto}{\dashrightarrow}
\newcommand{\rto}{\righttoleftarrow}
\newcommand{\T}{\mathcal{T}}
\newcommand{\hide}[1]{}
\newcounter{main}
\theoremstyle{plain}
        \newtheorem{theorem}{Theorem}[section]
        \newtheorem*{theorem*}{Theorem}
        \newtheorem*{conj*}{Conjecture}
        \newtheorem{lemma}[theorem]{Lemma}
        \newtheorem{corollary}[theorem]{Corollary}
        \newtheorem{proposition}[theorem]{Proposition}
        \newtheorem{maintheorem}[main]{Main Theorem}        
\theoremstyle{definition}
        \newtheorem{definition}[theorem]{Definition}
        \newtheorem*{definition*}{Definition}
\theoremstyle{remark}
        \newtheorem{remark}[theorem]{Remark}
        \newtheorem{example}[theorem]{Example}
        \newtheorem*{example*}{Example}
        \newtheorem*{examples*}{Examples}        
        \newtheorem*{claim}{Claim}
        \newtheorem*{claim1}{Claim 1}
        \newtheorem*{claim2}{Claim 2}
        \newtheorem*{claim3}{Claim 3}
        \newtheorem*{claim4}{Claim 4}
\newenvironment{subproof}[1][\proofname]{%
  \begin{proof}[#1]%
}{%
  \end{proof}%
}
\title[Finite and infinite degree Thurston maps with a small postsingular set]{Finite and infinite degree Thurston maps with a small postsingular set}
\author{Nikolai Prochorov}
\address {Aix-Marseille Université, CNRS, Institut de Mathématiques de Marseille, 13003 Marseille, France}
\email{nikolai.prochorov@etu.univ-amu.fr, prochorov41@gmail.com}
\date{\today}
\keywords{Thurston maps, postsingularly finite holomorphic maps, Teichm\"uller spaces, moduli spaces, pullback maps, Levy cycles.}
\subjclass[2020]{Primary 37F20; Secondary 37F10, 37F15, 37F34.}
\begin{document}

\begin{abstract}
    We develop the theory of Thurston maps that are defined everywhere on the topological sphere $S^2$ with a possible exception of a single essential singularity. We establish an analog of the celebrated W.~Thurston's characterization theorem for a broad class of such Thurston maps having four postsingular values. To achieve this, we analyze the corresponding pullback maps defined on the one-complex dimensional Teichm\"uller space. This analysis also allows us to derive various properties of Hurwitz classes of the corresponding Thurston~maps.
    

\end{abstract}

\maketitle

\tableofcontents

\newpage

\section{Introduction}

\subsection{Thurston theory for finite and infinite degree maps}\label{subsec: inro thurston theory}
In the one-dimensional rational dynamics, the crucial role is played by the family of \textit{postcritically finite} (or \textit{pcf} in short) rational maps, i.e., maps with all critical points being periodic or pre-periodic. In this context, one of the most influential ideas has been to abstract from the rigid underlying complex structure and consider the more general setup of postcritically finite \emph{branched self-coverings} of the topological $2$-sphere $S^2$. Nowadays, orientation-preserving pcf branched covering maps $f\colon S^2 \to S^2$ of topological degree $\deg(f)\geq 2$ are called \emph{Thurston maps} (\textit{of finite degree}), in honor of William Thurston, who introduced them to deepen the understanding of the dynamics of postcritically finite rational maps~on~$\widehat{\C}$.

These ideas can be extended to the transcendental setting to explore the dynamics of \textit{postsingularly finite} (\textit{psf} in short) meromorphic maps. A meromorphic map is called \textit{postsingularly finite} if it has finitely many \textit{singular values}, and each of them eventually becomes periodic or lands on the essential singularity under the iteration. We can generalize the notion of a Thurston map to include postsingularly finite \textit{topologically holomorphic} non-injective maps $f \colon X \to S^2$, where $X$ is a punctured topological sphere, $f$ does not extend continuously to the entire $S^2$ and meets a technical condition of being a \textit{parabolic type} map; see Sections~\ref{subsec: topologically holomorphic maps} and~\ref{subsec: thurston maps}. Note that in this case the map $f$ must be \textit{transcendental}, meaning that it has infinite topological degree. For simplicity, we will use the notation $f \colon S^2 \dto S^2$ to indicate that the Thurston map $f$, whether finite or infinite degree, might not be defined at a single~point~of~$S^2$.

For a Thurston map $f \colon S^2 \dto S^2$, the \textit{postsingular set} $P_f$ is defined as the union of all orbits of the singular values of the map $f$.  It is important to note that some of these orbits might terminate after several iterations, if a singular value reaches the point where the map~$f$ is not defined. The elements of the postsingular set $P_f$ are called the \textit{postsingular values} of the Thurston map $f$. If the map $f$ is defined on the entire sphere $S^2$, we simply refer to its \textit{postcritical set} and \textit{postcritical values}, as the set of singular values of $f$ coincides with the set of its \textit{critical values}.  Two Thurston maps are called \emph{combinatorially} (or \emph{Thurston}) \emph{equivalent} if they are conjugate up to isotopy relative to their postsingular~sets; see Definition \ref{def: comb equiv}.

A fundamental question in this context is whether a given Thurston map $f$ can be \emph{realized} by a psf meromorphic map with the same combinatorics, that is, if $f$ is combinatorially equivalent to a psf meromorphic map. If the Thurston map $f$ is not realized, then we say that $f$ is \textit{obstructed}. William Thurston answered this question for Thurston maps of finite degree in his celebrated \emph{characterization of rational maps}: 
if a finite degree Thurston map $f \colon S^2 \to S^2$ has a \emph{hyperbolic orbifold} (this is always true, except for some well-understood special maps), then $f$ is realized by a pcf rational map if and only if $f$ has no \emph{Thurston obstruction} \cite{DH_Th_char}. Such an obstruction is given by a finite collection of disjoint simple closed curves in $S^2 - P_f$ with certain invariance properties under the map $f$. In many instances, it suffices to restrict to simpler types of Thurston obstructions provided by \textit{Levy cycles} or even \textit{Levy fixed curves}; see Definition \ref{def: levy cycle}, and \cite[Theorem 10.3.8]{Hubbard_Book_2}, \cite[Corollary~1.5]{critically_fixed}, or \cite[Theorems 7.6 and 8.6]{park} for examples of such cases.

The same characterization question can be also asked in the transcendental setting. The first breakthrough in this area was obtained in \cite{HSS}, where it was shown that an \textit{exponential} Thurston map is realized if and only if it has no Levy cycle. In this context, the exponential Thurston map is defined as a Thurston map with two singular values, both of which are \textit{omitted}, and one of which is the only essential singularity of the considered Thurston map. Furthermore, the results of \cite{Shemyakov_Thesis} and \cite{our_approx} suggest that a Thurston-like criterion for realizability may hold in a greater generality. However, the characterization question in the transcendental setting remains largely open, as many of the techniques used in Thurston theory for finite degree maps do not extend to this context.

Thurston theory lays out the relationship between the topological properties of a map, its dynamics, and its geometry in terms of the existence of a holomorphic realization. Further- more, it is strongly connected with the combinatorial and algebraic aspects of the dynamics of pcf rational and psf meromorphic maps. The results mentioned above have substantial applications for both rational and transcendental dynamics. For instance, Thurston's characterization result has allowed to classify various families of postcritically finite rational maps or finite degree Thurston maps in terms of combinatorial models \cite{Poirier_Thesis, Poirier, LifitingTrees}, \cite{Newton, RussellDierk_Class}, \cite{H_Tischler, critically_fixed}.  Building on the result of \cite{HSS}, similar classifications were obtained in \cite{class_of_exp} in terms of \textit{kneading sequences} and in \cite{hubbard_trees_for_exp} in terms of \textit{homotopic Hubbard treed} for the family of postsingularly finite exponential maps. Moreover, the concept of a homotopic Hubbard tree was extended to general postsingularly finite entire maps in \cite{Pfrang_thesis} (see also \cite{dreadlock}). It is plausible that a Thurston-like criterion is the final missing ingredient for the complete classification of the family of all psf entire maps. 

\subsection{Pullback maps}\label{subsec: intro pullback maps}
The key method in determining whether a given finite or infinite degree Thurston map $f \colon S^2 \dto S^2$ with the postsingular set $P_f = A$ is realized by a psf meromorphic map is the analysis of the dynamics of a holomorphic operator~$\sigma_f$, known as the \textit{pullback map}, defined on a complex manifold called the \textit{Teichm\"uller space}~$\T_A$; see Sections~\ref{subsec: teichmuller spaces} and~\ref{subsec: pullback maps}.  Crucially, the Thurston map $f$ is realized if and only if the pullback map $\sigma_f$ has a fixed point in $\T_A$; see \ref{prop: fixed point of sigma}. Moreover, the dynamics of the pullback map encodes many other properties of the corresponding Thurston map; see, for instance, \cite{pullback_map}, \cite{Pullback_invariants}, and \cite{correpondences}.

Instead of working directly in the Teichm\"uller space $\T_A$, it is often more convenient to work in a simpler complex manifold $\M_A$, known as a \textit{moduli space}. This space, roughly speaking, encodes all possible complex structures biholomorhic to a punctured Riemann sphere that can be put on the punctured topological sphere $S^2 - A$ (see Section \ref{subsec: teichmuller spaces} for the precise definition). There is a natural projection map $\pi \colon \T_A \to \M_A$ that is a holomorphic universal covering. However, the map $\sigma_f$ rarely descends to a map on the moduli space~$\M_A$. Nevertheless, for a finite degree Thurston map $f \colon S^2 \to S^2$, there exists a complex manifold $\W_f$, known as the \textit{Hurwitz space} of the Thurston map $f$, along with the holomorphic \textit{$X$-map} $X_f \colon \W_f \to \M_A$, a holomorphic covering map $Y_f \colon \W_f \to \M_A$, called the \textit{$Y$-map}, and a holomorphic covering $\omega_f \colon \T_A \to \W_f$, such that the following~diagram~commutes~\cite[Section 2]{critically_finite_endomorphisms}:
\begin{figure}[h]
        \begin{tikzcd}
            \T_A \arrow[rr, "\sigma_f"] \arrow[dd, "\pi"] \arrow[dr, "\omega_f"] & & \T_A \arrow [dd, "\pi"]\\
            & \W_f \arrow[dl, "Y_f"] \arrow[dr, "X_f"] & \\
            \M_A  & & \M_A            \end{tikzcd} 
        \caption{Fundamental diagram.}\label{fig: fund diag}
        \end{figure}
        
In other words, the pullback map $\sigma_f$ is semi-conjugate to a self-correspondence $X_f \circ Y_f^{-1}$ of the moduli space $\M_A$. If $f \colon S^2 \to S^2$ with $P_f = A$ is a finite degree Thurston map, then the $Y$-map $Y_f \colon \W_f \to \M_A$ has a finite topological degree; see \cite[Theorem 2.6]{critically_finite_endomorphisms}. This observation plays a crucial role in the proof of Thurston's characterization of rational maps. In fact, it allows to conclude that for a Thurston map $f$ with a hyperbolic orbifold, the $\sigma_f$-orbit $(\sigma_f^{\circ n}(\tau))$ of $\tau \in \T_A$ converges (indicating that $f$ is realized), if the projection $(\pi(\sigma_f^{\circ n}(\tau)))$ of this orbit visits some compact set of the moduli space $\M_A$ infinitely many times; see \cite[Section~10.9 and Lemma 10.11.9]{Hubbard_Book_2} and \cite[Proof of Theorem~2.3,~p.~20]{Nikita}.

The objects introduced above, along with commutative diagram \eqref{fig: fund diag} and the fact that $Y_f \colon \W_f \to \M_A$ has a finite degree, have broad applications beyond the proof of Thurston's characterization of rational maps; see, for example, \cite{BarNekr_Twist, pullback_map, Nikita, Nikita_canonical, critically_finite_endomorphisms, Lodge_Boundary, Pullback_invariants, Origami, smith_thesis, smith, correpondences}. These tools, for instance, allow to simultaneously study the entire \textit{Hurwitz} (\textit{equivalence}) \textit{class} $\Hurw_f$ of the finite degree Thurston map $f$. Here, two Thurston maps $f_1\colon S^2 \dto S^2$ and $f_2\colon S^2 \dto S^2$ with $P_{f_1} = P_{f_2}$ are said to be \textit{Hurwitz equivalent} if there exist orientation-preserving homeomorphisms $\phi_1, \phi_2 \colon S^2 \to S^2$ such that $\phi_1 | P_{f_1} = \phi_2 | P_{f_1}$ and $\phi_1 \circ f_1 = f_2 \circ \phi_2$; see \cite{BarNekr_Twist, Lodge_Boundary, Pullback_invariants} for examples of results on Hurwitz~classes. For instance, one can pose a question whether a given Thurston map is \textit{totally unobstructed}, i.e., $\Hurw_f$ consists of only realized Thurston maps.

Commutative diagram \eqref{fig: fund diag} is particularly powerful in two specific cases. The first is when the Thurston map $f \colon S^2 \to S^2$ has the postcritical set $P_f = A$ consisting of exactly four points. In this situation, the spaces $\T_A$, $\M_A$, and $\W_f$ are simply Riemann surfaces. In fact, the Teichm\"uller space is biholomorphic to the unit disk $\D$ and the moduli space $\M_A$ is biholomorphic to the three punctured Riemann sphere $\Sigma = \widehat{\C} - \{0, 1, \infty\}$. This allows the use of powerful machinery of one-dimensional holomorphic dynamics to study pullback maps. For example, this approach was utilized in \cite{smith_thesis} to derive an alternative proof of Thurston's characterization of rational maps in the case of four postcritical values, as well as in \cite{smith_thesis, smith} to investigate 
the \textit{global curve attractor conjecture}, which was ultimately resolved in \cite{correpondences} for all pcf rational maps with four postcritical values. Secondly, when the $X$-map $X_f$ is injective, the ``inverse'' of $\sigma_f$ descends to the so-called \textit{$g$-map} $g_f := Y_f \circ X_f^{-1}$, which is defined on the subset $X_f(\W_f)$ of the moduli space $\M_A$; see \cite[Section 5]{critically_finite_endomorphisms} for examples of finite degree Thurston maps that satisfy this condition.

The theory of moduli maps, as outlined above, is developed for finite degree Thurston maps but has not yet been established in the context of transcendental Thurston maps. In this paper, we consider a certain family of Thurston maps, that includes maps of both finite and infinite degree, with four postsingular values and show that the corresponding pullback maps admit an analogue of commutative diagram \eqref{fig: fund diag}, where the $X$-map is always injective, but the $Y$-map has infinite degree if the initial Thurston map $f \colon S^2 \dto S^2$ is transcendental. Using tools of one-dimensional holomorphic dynamics and hyperbolic geometry, we establish a Thurston-like realizability criterion for this family of maps. In particular, we demonstrate that the obstacle of the $Y$-map having infinite degree can be finessed. Afterward, we illustrate how the developed machinery can be used to investigate the properties of the corresponding Hurwitz classes.

\subsection{Main results}\label{subsec: main results}
In this paper, we study the family of Thurston maps $f \colon S^2 \dto S^2$ satisfying the following conditions:
\begin{enumerate}[label=(\Alph*)]
    \item \label{it: small s set} the map $f$ has at most three singular values;

    \item \label{it: small ps set} the postsingular set $P_f$ consists of exactly four points;

    \item \label{it: extra property} there exists a set $B \subset P_f$ such that $|B| = 3$, $S_f \subset B$, and $|\overline{f^{-1}(B)} \cap P_f| = 3$.
\end{enumerate}

Here, $\overline{f^{-1}(B)}$ is the closure of the set $f^{-1}(B)$ in the topology of $S^2$. In particular, it coincides with $f^{-1}(B)$ if the Thurston map $f$ has a finite degree; otherwise, it also includes the point where $f$ is not defined.

Clearly, conditions \ref{it: small ps set} and \ref{it: extra property} are independent from the function-theoretical properties of the map $f$. More specifically, if the map $f$ has at most three singular values, then these conditions can be verified by analyzing the dynamics of the map $f$ on the finite set~$P_f$. For instance, in the case of \textit{entire} Thurston maps (those that can be restricted to self-maps of $\R^2$; see Section \ref{subsec: topologically holomorphic maps}) with three singular and four postsingular values, three out of seven possible \textit{postsingular portraits} satisfy condition \ref{it: extra property}; see Example \ref{ex: entire}. More examples of families of Thurston maps that meet these conditions can be found in Section \ref{subsec: examples}. 

Although conditions \ref{it: small s set}--\ref{it: extra property} are quite restrictive, there are still uncountably many pairwise combinatorially inequivalent both realized and obstructed Thurston maps that meet them; see Remark~\ref{rem: uncountably many}. Notably, these conditions are preserved under Hurwitz equivalence of Thurston~maps. Finally, it is worth mentioning that all of our further results work in a slightly more
general setting of \textit{marked} Thurston maps satisfying analogous properties to \ref{it: small s set}--\ref{it: extra property}; see~Sections~\ref{subsec: thurston maps}~and~\ref{sec: thurston theory}.

\subsubsection{Characterization problem} \label{subsubsec: char} We establish an analog of Thurston's characterization result for the family of Thurston maps that satisfy conditions \ref{it: small s set}--\ref{it: extra property}. In fact, we show that it is sufficient to consider one of the simplest types of Thurston obstructions --~Levy fixed curves~-- to determine whether such a Thurston map is realized. For a Thurston map $f \colon S^2 \dto S^2$, a Levy fixed curve is a simple closed curve $\gamma\subset S^2 - P_f$ such that $\gamma$ is \textit{essential}, i.e., it cannot be shrinked to a point by a homotopy in $S^2 - P_f$, and there exists another simple closed curve  $\widetilde{\gamma} \subset f^{-1}(\gamma)$ such that $\gamma$ and $\widetilde{\gamma}$ are homotopic in $S^2 - P_f$ and $f|\widetilde{\gamma}\colon \widetilde{\gamma}\to \gamma$ is a homeomorphism. If the map $f$ is injective on one of the connected components of $S^2 - \widetilde{\gamma}$, then we say that the Levy fixed curve $\gamma$ is \textit{weakly degenerate}.

\begin{maintheorem}\label{mainthm A}
    Let $f \colon S^2 \dto S^2$ be a Thurston map of finite or infinite degree that satisfies conditions \ref{it: small s set}--\ref{it: extra property}. Then $f$ is realized if and only if it has no weakly degenerate Levy fixed curve. Moreover, if $f$ is obstructed, then it has a unique Levy fixed curve up to homotopy in $S^2 - P_f$; otherwise, it is realized by a psf meromorphic map that is unique up to M\"obuis conjugation.
\end{maintheorem}

To prove Main Theorem \ref{mainthm A}, we start by showing that the corresponding pullback map $\sigma_f$ defined on the Teichm\"uller space $\T_A \sim \D$, where $A = P_f$, admits an analog of commutative diagram \eqref{fig: fund diag}, where the $X$-map is injective, the $Y$-map is a covering, potentially of infinite degree, and the analog of the Hurwitz space $\W_f$ is a finitely or countably punctured Riemann sphere (see Proposition~\ref{prop: 1-dim pullback maps} and Remark \ref{rem: comparision with finite degree case}). Further analysis reveals a crucial observation similar to that in the proof of Thurston's characterization result: if the sequence $(\pi(\sigma_f^{\circ n}(\tau)))$ with $\tau \in \T_A$ visits a certain compact set of the moduli space $\M_A$ infinitely many times, then the $\sigma_f$-orbit $(\sigma_f^{\circ n}(\tau))$ of $\tau$ converges to the unique fixed point of $\sigma_f$ (see Claim 1 of the proof~of~Theorem~\ref{thm: iteration on unit disk}). 

Moreover, we establish a more refined result: if the pullback map $\sigma_f$ does not have a fixed point (indicating that the Thurston map $f$ is obstructed), then the sequence $(\pi(\sigma_f^{\circ n}(\tau)))$ converges to the same ``cusp'' $x \in \partial \M_A \sim \partial \Sigma = \{0, 1, \infty\}$ of the moduli space $\M_A$, regardless of the choice of $\tau \in \T_A$. Furthermore, the map $g_f = Y_f \circ X_f^{-1}$ can be holomorphically extended to a neighborhood of this cusp and $x$ becomes a repelling fixed point of $g_f$. It is worth noting that these results hold not only for pullback maps, but also in a broader class of holomorphic self-maps of the unit disk; see Theorem \ref{thm: iteration on unit disk}.
Finally, this analysis provides a sufficient control over the dynamics of $\sigma_f$ to derive the existence of a Levy fixed curve for the obstructed Thurston map $f\colon S^2 \dto S^2$.

Main Theorem \ref{mainthm A} is the first result addressing the characterization problem in the transcendental setting that is established with minimal reliance on the function-theoretical properties of the considered Thurston maps. This differs from \cite{HSS}, which focuses on exponential Thurston maps, and \cite{Shemyakov_Thesis}, which is primarily devoted to \textit{structurally finite} Thurston maps. Note that the geometric and analytic properties of entire or meromorphic maps, even those with few singular values, can be highly varied and subtle; see~\cite{qc_foldings, order_conjecture, models_for_class_S}.

Main Theorem \ref{mainthm A} also offers an alternative proof for the characterization of postsingularly finite exponential maps \cite[Theorem~2.4]{HSS} for the case of four postsingular values; see Example \ref{ex: exponential}. While the proof in \cite{HSS} relies on the intricate machinery of \textit{integrable quadratic differentials} and their \textit{thick-thin decompositions}, our approach uses more explicit techniques that shed the light on the geometry of the pullback dynamics on Teichm\"uller and moduli spaces. At the same time, Main Theorem \ref{mainthm A} provides a novel proof of Thurston's characterization of rational maps within a broad class of examples, and this proof does not rely on the fact that $Y$-map has a finite~degree.

\subsubsection{Hurwitz classes} \label{subsubsec: hurwitz} Techniques explained in Section \ref{subsubsec: char} also allow us to derive several properties of Hurwitz classes:

\begin{maintheorem}\label{mainthm B}
    Let $f \colon S^2 \dto S^2$ be a Thurston map of finite or infinite degree that satisfies conditions \ref{it: small s set}--\ref{it: extra property}. Then 
    \begin{enumerate}
        \item \label{it: tot unobstr} $f$ is totally unobstructed if and only if there are no two points $a, b \in P_f$ such that $\deg(f, a) = \deg(f, b) = 1$ and $f(\{a, b\})$ equals $\{a, b\}$ or $P_f - \{a, b\}$;

        \item if $f$ is not totally unobstructed, then its Hurwitz class $\Hurw_f$ contains infinitely many pairwise combinatorially inequivalent obstructed Thurston maps;

        \item \label{it: realiz} if $f$ has infinite degree, then its Hurwitz class $\Hurw_f$ contains infinitely many pairwise combinatorially inequivalent realized Thurston maps.
    \end{enumerate}
\end{maintheorem}
The main tool for proving Main Theorem \ref{mainthm B} is the relationship between fixed points of the map $g_f = Y_f \circ X_f^{-1}$ and the elements of the Hurwitz class $\Hurw_f$. Let $f \colon S^2 \dto S^2$ be a Thurston map with $P_f = A$ satisfying assumptions \ref{it: small s set}--\ref{it: extra property}. We show that if the corresponding map~$g_f$ can be holomorphically extended to a neighborhood of $x \in \partial \M_A \sim \partial \Sigma = \{0, 1, \infty\}$ and $x$ becomes a repelling fixed point of $g_f$, then the Hurwitz class $\Hurw_f$ must contain an obstructed Thurston map. Moreover, $f$ is totally unobstructed if and only if none of the ``cusps'' of the moduli space $\M_A$ exhibit this behavior; see Proposition \ref{prop: fixed points and obstructed maps}. Using additional properties of the map $g_f$, we establish a simple criterion, as in item \eqref{it: tot unobstr} of Main Theorem \ref{mainthm B}, for determining whether a given Thurston map with properties \ref{it: small s set}--\ref{it: extra property} is totally unobstructed. Furthermore, with the understanding of possible obstructions provided by Main Theorem \ref{mainthm A}, we can construct infinitely many pairwise combinatorially inequivalent obstructed Thurston maps within the Hurwitz class $\Hurw_f$ starting from just one of them.

To prove item \eqref{it: realiz} of Main Theorem \ref{mainthm B}, we show that a fixed point $x \in \M_A$ of the map~$g_f$ corresponds to a realized Thurston map within the Hurwitz class $\Hurw_f$; see Proposition \ref{prop: fixed points and realized maps}. Furthermore, only finitely many fixed points of $g_f$ can correspond to the same Thurston map up to combinatorial equivalence (see the proof of Theorem \ref{thm: B}). The desired result then follows because the map $g_f$ has infinitely many fixed points when $f$ is transcendental;~see~Proposition~\ref{prop: 1-dim pullback maps}~and~Lemma~\ref{lemm: inf many repelling fixed points}.

Similar connections between the fixed points of the map $g_f$ and Thurston maps within the Hurwitz class $\Hurw_f$ are already established in the context of finite degree Thurston maps (cf. \cite[Propositions 4.3 and 4.4]{critically_finite_endomorphisms} and \cite[Theorem 9.1]{Pullback_invariants}). However, their extensions to the setting of transcendental Thurston maps are novel contributions. Additionally, as an application of Main Theorem \ref{mainthm B}, we can obtain the following result regarding the structure of parameter spaces of finite-type meromorphic maps (see Definition \ref{def: parameter space}):

\begin{corollary}\label{corr: intro}
    Let $g \colon \C \to \widehat{\C}$ be a transcendental meromorphic map such that $|S_g|~\leq~3$. Then its parameter space $\Par(g)$ contains infinitely many pairwise (topologically or conformally) non-conjugate psf maps with four postsingular values.
\end{corollary}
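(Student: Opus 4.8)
The plan is to reduce Corollary~\ref{corr: intro} to Main Theorem~\ref{mainthm B}, specifically to item~\eqref{it: realiz}, by locating inside the parameter space $\Par(g)$ a suitable transcendental Thurston map satisfying conditions~\ref{it: small s set}--\ref{it: extra property}. First I would recall (from the relevant section on parameter spaces, which I am assuming is set up earlier) that $\Par(g)$ consists, up to the appropriate conjugacy, of all finite-type meromorphic maps sharing the ``same'' singular structure as $g$ — in particular the same number of singular values and essentially the same combinatorial/topological type away from the singular values. Since $g \colon \C \to \widehat\C$ is transcendental, every map in $\Par(g)$ is transcendental, hence of infinite degree, and has at most three singular values, so condition~\ref{it: small s set} holds automatically for any psf member of $\Par(g)$.

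Next I would produce at least one psf map $h \in \Par(g)$ whose postsingular set $P_h$ has exactly four points (condition~\ref{it: small ps set}) and which additionally satisfies the combinatorial condition~\ref{it: extra property}. The key point is that condition~\ref{it: extra property} depends only on the dynamics of $h$ on the finite set $P_h$ together with the local degrees along it, and — as the authors emphasize in Section~\ref{subsec: main results}, e.g.\ in Example~\ref{ex: entire} — is an open/combinatorial condition that is routinely satisfiable. I would therefore choose $h$ so that its postsingular portrait is one of the admissible ones; for instance, arrange that one singular value is preperiodic landing in a two-cycle (or eventually mapping onto the essential singularity, whichever the class $S$ structure of $g$ permits), so that $|P_h| = 4$ and a three-point subset $B \supset S_h$ with $|\overline{h^{-1}(B)} \cap P_h| = 3$ exists. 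That such an $h$ exists in $\Par(g)$ is the one place where I must invoke the specific (minimal) hypotheses on $g$ — that it is transcendental with $|S_g| \le 3$ — together with the fact that one can prescribe the asymptotic/branching combinatorics of a class-$S$ map within its parameter space. This existence step, i.e.\ exhibiting a single concrete psf representative with the right postsingular portrait, is what I expect to be the main obstacle, since it requires knowing that $\Par(g)$ is ``rich enough''; I would handle it by an explicit model construction (a quasiconformal surgery or a direct description in the exponential/structurally-finite subcases) reducing to the examples already catalogued in Section~\ref{subsec: examples}.

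Once such an $h$ is in hand, I would apply Main Theorem~\ref{mainthm B}\eqref{it: realiz}: since $h$ has infinite degree and satisfies~\ref{it: small s set}--\ref{it: extra property}, its Hurwitz class $\Hurw_h$ contains infinitely many pairwise combinatorially inequivalent realized Thurston maps $f_1, f_2, \dots$. Each realized $f_i$ is combinatorially equivalent to a psf meromorphic map $h_i$; by definition of Hurwitz equivalence all the $f_i$ share the same singular portrait as $h$, hence each $h_i$ lies in $\Par(g)$ (here I again use that membership in a parameter space is a property of the singular structure, which is a Hurwitz-invariant). Moreover each $h_i$ has four postsingular values, inherited from $|P_{f_i}| = 4$. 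Finally, combinatorially inequivalent Thurston maps yield meromorphic maps that are not conjugate (topologically or conformally) — this is essentially the rigidity direction of Thurston-type theory, i.e.\ if $h_i$ and $h_j$ were conjugate by a homeomorphism respecting their postsingular sets they would be combinatorially equivalent, contradicting the choice of $f_i$. At worst finitely many of the $f_i$ could give conjugate $h_i$'s, but since there are infinitely many combinatorial classes among the $f_i$ we still extract infinitely many pairwise non-conjugate psf maps with four postsingular values in $\Par(g)$, which is exactly the claim. I would close by remarking that no hyperbolic-orbifold caveat is needed because maps satisfying~\ref{it: small ps set} with the given singular data fall outside the finitely many orbifold-exceptional cases, or alternatively that Main Theorem~\ref{mainthm A} already handles the realizability uniformly.
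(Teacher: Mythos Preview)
Your overall strategy---reduce to item~\eqref{it: realiz} of Main Theorem~\ref{mainthm B} by exhibiting a single transcendental Thurston map with the correct combinatorics whose Hurwitz class then supplies infinitely many realized (hence holomorphic, hence in $\Par(g)$) representatives---matches the paper's. The passage from combinatorial inequivalence to non-conjugacy is also argued the same way.

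The one place you diverge, and where your proposal has a genuine gap, is the construction step. You look for a \emph{holomorphic} psf map $h\in\Par(g)$ with a prescribed four-point postsingular portrait, and you acknowledge this as ``the main obstacle,'' proposing to handle it by quasiconformal surgery or by reduction to catalogued subcases. But producing a holomorphic psf map with prescribed combinatorics is precisely a realization problem, and you have no independent tool for it here; invoking surgery for a completely general transcendental $g$ with $|S_g|\le 3$ is not a concrete argument. The paper sidesteps this entirely: it never looks for a holomorphic psf map first. Instead it normalizes $g$ so that $\{0,\infty\}\subset S_g\subset\{0,1,\infty\}$ and $g(0)=g(1)=1$, then writes down a \emph{topological} Thurston map $f=\varphi^{-1}\circ g\circ\psi$ using two \emph{different} orientation-preserving homeomorphisms $\varphi,\psi\colon S^2\to\widehat\C$ chosen so that the induced dynamics on four marked points $a,b,c,d$ has the portrait $a\mapsto b\mapsto c\mapsto b$ with $d$ the essential singularity. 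This $f$ is not asserted to be realized; it is just a Thurston map satisfying \ref{it: small s set}--\ref{it: extra property}, which is all Main Theorem~\ref{mainthm B}\eqref{it: realiz} requires. The construction is two lines and uses nothing beyond the definition of a topologically holomorphic map. Once you see that the input to Theorem~\ref{mainthm B} need only be a topological Thurston map, your ``main obstacle'' evaporates.
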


\subsection{Organization of the paper}\label{subsec: Organization of the paper}

Our paper is organized as follows. In Section \ref{sec: background}, we review some general background. In Section \ref{subsec: notation and basic concepts}, we fix the notation and state some basic definitions. We discuss topologically holomorphic maps in Section \ref{subsec: topologically holomorphic maps}. The necessary background on Thurston maps is covered in Section \ref{subsec: thurston maps}. Section \ref{subsec: teichmuller spaces} introduces the Teichm\"uller and moduli spaces of a marked topological sphere. Finally, in Section \ref{subsec: pullback maps}, we define pullback maps, discuss their basic properties and their relations with the associated Thurston~maps.

In Section \ref{sec: hyperbolic tools}, we present several results concerning the hyperbolic geometry and dynamics of holomorphic self-maps of the unit disk. Section \ref{subsec: levy cycles} provides tools for identifying obstructions for Thurston maps with four postsingular values. We establish some estimates for hyperbolic contraction of inclusion maps between two hyperbolic Riemann surfaces in Section \ref{subsec: estimating contraction}. In Section \ref{subsec: iteration on the unit disk}, we investigate dynamics of holomorphic self-maps of the unit disk satisfying certain additional assumptions.

Further, in Section \ref{sec: thurston theory}, we develop the Thurston theory for a family of Thurston maps satisfying condition \ref{it: small s set}--\ref{it: extra property}. In particular, in Section \ref{subsec: characterization problem}, we address the characterization problem for this class of Thurston maps and prove Main Theorem \ref{mainthm A}. We study properties of Hurwitz classes and prove Main Theorem \ref{mainthm B} and Corollary \ref{corr: intro} in Section~\ref{subsec: hurwitz classes}. Finally, in Section \ref{subsec: examples}, we provide and analyze various examples.

\textbf{Acknowledgments.} I would like to express my deep gratitude to my thesis advisor, Dierk Schleicher, for introducing me to the fascinating world of Transcendental Thurston Theory. I am also profoundly thankful to Kevin Pilgrim and Lasse Rempe for the their valuable suggestions and for many helpful and inspiring discussions. I would like to thank \textit{Centre National de la Recherche Scientifique} (\textit{CNRS}) for supporting my visits to the University of Saarland and University of Liverpool, where these conversations took place. Special thanks go to Anna Jov\'e and Zachary Smith for the discussions on the dynamics of holomorphic self-maps of the unit disk and their diverse applications.

\section{Background} \label{sec: background}

\subsection{Notation and basic concepts} \label{subsec: notation and basic concepts} The sets of positive integers, non-zero integers, integers, real and complex numbers are denoted by $\N$, $\Z^*$, $\Z$, $\R$, and $\C$, respectively. We use the notation $\I:=[0,1]$ for the closed unit interval on the real line, $\D :=\{z \in \C\colon |z|<1\}$ for the
open unit disk in the complex plane, $\D^* := \D - \{0\}$ for the punctured unit disk, $\C^* := \C - \{0\}$ for the punctured complex plane, $\H := \{z \in \C: \Im(z) > 0\}$ for the upper half-plane, $\widehat{\C}:=\C\cup\{\infty\}$ for the Riemann sphere, and $\Sigma$ for the three-punctured Riemann sphere $\widehat{\C} - \{0, 1, \infty\}$. The open and closed disks of radius~$r>0$ centered at~$0$ are denoted by $\D_r$ and $\overline{\D}_r$, respectively. Finally, $\arg(z) \in [0, 2\pi)$ and $|z|$ denote the argument and the absolute value, respectively, of the complex number $z$.

We denote the oriented 2-dimensional sphere by $S^2$. In this paper, we treat it as a purely topological object. In particular, our convention is to write $g\colon \C \to \widehat{\C}$ or $g \colon \widehat{\C} \to \widehat{\C}$ to indicate that $g$ is holomorphic, and $f\colon S^2 \to S^2$ if $f$ is only continuous. The same rule applies to the notation $g \colon \widehat{\C} \dto \widehat{\C}$ and $f \colon S^2 \dto S^2$ (see Section \ref{subsec: topologically holomorphic maps} for the details).

The cardinality of a set $A$ is denoted by $|A|$ and the identity map on $A$ by $\id_A$. If $f\colon U\to V$ is a map and $W\subset U$, then $f|W$ stands for the restriction of $f$ to $W$. If $U$ is a topological space and $W\subset U$, then $\overline W$ denotes the closure and $\partial W$ the boundary of $W$ in $U$.

A subset $D$ of $\widehat{\C}$ is called an \emph{open Jordan region} if there exists an injective continuous map $\varphi\colon \overline{\D} \to \widehat{\C}$ such that $D = \varphi(\D)$. In this case, $\partial D = \varphi(\partial \D)$ is a simple closed curve in $\widehat{\C}$. A domain $U \subset \widehat{\C}$ is called an \textit{annulus} if $\widehat{\C} - U$ has two connected components. The \textit{modulus} of an annulus $U$ is denoted by $\mod(U)$ (see \cite[Proposition~3.2.1]{Hubbard_Book_1}~for~the~definition).

Let $U$ and $V$ be topological spaces. A continuous map $H\colon U\times \I \to V$ is called a \emph{homotopy} from $U$ to $V$. When $U = V$, we simply say that $H$ is a homotopy \textit{in} $U$. Given a homotopy $H \colon U \times \I \to V$, for each $t \in \I$, we associate the \emph{time-$t$ map} $H_t:=H(t,\cdot)\colon U\to V$. Sometimes it is convenient to think of the homotopy $H$ as a continuous family of its time maps~$(H_t)_{t \in \I}$. The homotopy $H$ is called an (\emph{ambient}) \emph{isotopy} if the map $H_t\colon U\to V$ is a homeomorphism for each $t\in \I$. Suppose $A$ is a subset of $U$. An isotopy $H\colon U\times \I \to V$ is said to be an isotopy \emph{relative to $A$} (abbreviated ``$H$ is an isotopy rel.\ $A$'') if $H_t(p) = H_0(p)$ for all $p\in A$ and $t\in \I$. 

Given $M, N \subset U$, we say that \emph{$M$ is homotopic} (\textit{in $U$}) to $N$ if there exists a homotopy $H\colon U\times \I \to U$  with $H_0 = \id_U$ and $H_1(M) = N$. Two homeomorphisms $\varphi_0, \varphi_1\colon U\to V$ are called \emph{isotopic} (\emph{rel.\ $A\subset U$}) if there exists an
isotopy $H\colon U\times \I \to V$ (rel.\ $A$) with $H_0=\varphi_0$ and $H_1 = \varphi_1$. 


We assume that every topological surface $X$ is oriented. We denote by $\Homeo^+(X)$ and $\Homeo^+(X, A)$ the group of all orientation-preserving self-homeomorphisms of $X$ and the group of all orientation-preserving self-homeomorphisms of $X$ fixing $A$ pointwise, respectively. We use the notation $\Homeo^+_0(X, A)$ for the subgroup of $\Homeo^+(X, A)$ consisting of all homeomorphisms isotopic rel.\ $A$ to $\id_X$.

\subsection{Topologically holomorphic maps} \label{subsec: topologically holomorphic maps}

In this section, we briefly recall the definition of a topologically holomorphic map and some of its basic properties (for more detailed discussion see \cite[Section 2.3]{our_approx}; see also \cite{Stoilow}).

\begin{definition}\label{def: top hol}
    Let $X$ and $Y$ be two connected topological surfaces. A map $ f\colon X \to Y$ is called \textit{topologically holomorphic} if it satisfies one of the following four equivalent conditions:
    \begin{enumerate}
        \item for every $p \in X$ there exist $d \in \N$, a neighborhood $U$ of~$x$, and two orientation-preserving homeomorphisms $\psi\colon U \to \D$ and $\varphi\colon f(U) \to \D$ such that $\psi(p) = \varphi(f(p)) = 0$ and $(\varphi \circ f \circ \psi^{-1})(z) = z^d$ for all $z \in \D$;

        \item $f$ is continuous, open, \textit{discrete} (i.e., $f^{-1}(q)$ is discrete in $X$ for very $q \in Y$), and for every $p \in X$ such that $f$ is locally injective at $p$, there exists a neighborhood $U$ of $p$ for which $f|U\colon U \to f(U)$ is an orientation-preserving homeomorphism;

        \item there exist Riemann surfaces $S_X$ and $S_Y$ and orientation-preserving homeomorphisms $\varphi \colon Y \to S_Y$ and $\psi \colon X \to S_X$ such that $\varphi \circ f \circ \psi^{-1} \colon S_X \to S_Y$ is a holomorphic map;

        \item \label{it: pulling back} for every orientation-preserving homeomorphism $\varphi \colon Y \to S_Y$, where $S_Y$ is a Riemann surface, there exist a Riemann surface $S_X$ and an orientation-preserving homeomorphism $\psi \colon X \to S_X$ such that $\varphi \circ f \circ \psi^{-1} \colon S_X \to S_Y$ is a holomorphic map.
    \end{enumerate}
\end{definition}

Note that in condition (\ref{it: pulling back}) of Definition \ref{def: top hol}, the homeomorphism $\psi$ is defined uniquely up to post-composition with a conformal automorphism of $S_X$ for fixed $\varphi$ and $S_X$.

It is straightforward to define the concepts of \textit{regular}, \textit{singular}, \textit{critical}, and \textit{asymptotic values}, as well as \textit{regular} and \textit{critical points} and their \textit{local degrees} (denoted by $\deg(f, \cdot)$) for the topologically holomorphic map $f$ (see \cite[Definition 2.7]{our_approx}). We denote by $S_f \subset Y$ the \textit{singular set} of~$f$, i.e., the union of all singular values of the topologically holomorphic map $f \colon X \to Y$. We say that the map $f$ is \textit{of finite type} or belongs to the \textit{Speiser class} $\mathcal{S}$ if the set $S_f$ is finite.

In this paper, we study topologically holomorphic maps $f \colon X \to S^2$, where $X$ is either the sphere $S^2$ or the punctured sphere $S^2 - \{e\}$. In the latter case, we assume that $f$ cannot be extended as a topologically holomorphic map to the entire sphere $S^2$.
For the sake of simplicity, we are going to use the notation $f \colon S^2 \dto S^2$ in order to indicate that $f$ might not be defined at a single point $e \in S^2$. Similar to the holomorphic case, the point $e$ is referred as the \textit{essential singularity} of the map $f$. Likewise, for a holomorphic map $g$ defined everywhere on~$\widehat{\C}$ with the possible exception of a single essential singularity, we use the notation $g \colon \widehat{\C} \dto \widehat{\C}$, and we say that $g \colon \widehat{\C} \dto \widehat{\C}$ is holomorphic.

It is possible to derive the following isotopy lifting property for topologically holomorphic maps as above in the case when they are of finite type (cf. \cite[Propostion 2.3]{Rempe_Epstein}).

\begin{proposition}\label{prop: isotopy lifting property}
    Let $f \colon X \to S^2$ and $\widetilde{f}\colon \widetilde{X} \to S^2$ be topologically holomorphic maps of finite type, where $X$ and $\widetilde{X}$ are either topological spheres or punctured topological spheres. Suppose that $\varphi_0 \circ f = \widetilde{f} \circ \psi_0$ for some $\varphi_0, \psi_0 \in \Homeo^+(S^2)$.
    Let $A \subset S^2$ be a finite set containing $S_{f}$ and $\varphi_1 \in \Homeo^+(S^2)$ is isotopic rel.\ $A$ to $\varphi_0$. Then $\varphi_1 \circ f = \widetilde{f} \circ \psi_1$ for some $\psi_1 \in \Homeo^+(S^2)$ isotopic rel.\ $f^{-1}(A) \cup (S^2 - X) \supset \overline{f^{-1}(A)}$ to $\psi_0$.
\end{proposition}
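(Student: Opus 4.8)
The plan is to reduce to the standard isotopy lifting property for finite-type holomorphic maps between Riemann surfaces, using condition (\ref{it: pulling back}) of Definition \ref{def: top hol} to transport everything into the holomorphic category. First I would fix an orientation-preserving homeomorphism $\varphi \colon S^2 \to \widehat{\C}$ and use Definition \ref{def: top hol}(\ref{it: pulling back}) to produce Riemann surface structures and homeomorphisms $\psi \colon X \to S_X$ and $\widetilde{\psi} \colon \widetilde{X} \to S_{\widetilde{X}}$ so that $g := \varphi \circ f \circ \psi^{-1}$ and $\widetilde{g} := \varphi \circ \widetilde{f} \circ \widetilde{\psi}^{-1}$ are holomorphic maps between punctured spheres; since $f, \widetilde f$ are of finite type, so are $g, \widetilde g$. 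The given relation $\varphi_0 \circ f = \widetilde f \circ \psi_0$ becomes, after conjugating by $\varphi$, a relation $\Phi_0 \circ g = \widetilde g \circ \Psi_0$ between holomorphic maps, where $\Phi_0 = \varphi \circ \varphi_0 \circ \varphi^{-1}$ and $\Psi_0 = \widetilde\psi \circ \psi_0 \circ \psi^{-1}$ are orientation-preserving homeomorphisms of $\widehat{\C}$ (or of the relevant punctured spheres); and an isotopy rel.\ $\varphi(A)$ from $\Phi_0$ to $\Phi_1 := \varphi \circ \varphi_1 \circ \varphi^{-1}$.

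The heart of the matter is then the holomorphic statement: given finite-type holomorphic maps $g, \widetilde g$ of punctured spheres, a relation $\Phi_0 \circ g = \widetilde g \circ \Psi_0$, and an isotopy rel.\ a finite set $A' \supset S_g$ from $\Phi_0$ to $\Phi_1$, one can lift to get $\Psi_1$ with $\Phi_1 \circ g = \widetilde g \circ \Psi_1$ and $\Psi_1$ isotopic rel.\ $g^{-1}(A')$ to $\Psi_0$. This is precisely the content of the cited \cite[Proposition 2.3]{Rempe_Epstein}: the isotopy $(\Phi_t)$ is a path in the space of homeomorphisms starting at a point admitting a lift, and because $g$ restricted to the complement of $g^{-1}(A')$ is an unbranched covering onto the complement of $A'$ (all branching and all singular behaviour occurs over $A'$), the path lifts uniquely once one normalizes the lift at $t=0$; one checks that the endpoint lift $\Psi_1$ satisfies the semiconjugacy and agrees with $\Psi_0$ on $g^{-1}(A')$. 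Here one uses that $\widetilde g$ is discrete and open, so that the lifted path stays within the homeomorphism group, and finite type guarantees the covering has the properties needed for path lifting to work on the punctured surface.

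Finally I would pull back along the homeomorphisms $\psi, \widetilde\psi, \varphi$: set $\psi_1 := \widetilde\psi^{-1} \circ \Psi_1 \circ \psi$, which is an orientation-preserving self-homeomorphism of $S^2$ satisfying $\varphi_1 \circ f = \widetilde f \circ \psi_1$, and transport the isotopy rel.\ $g^{-1}(\varphi(A))$ back to an isotopy rel.\ $\psi^{-1}(g^{-1}(\varphi(A))) = f^{-1}(A)$. The remaining bookkeeping point is the domain of definition: when $X$ (resp.\ $\widetilde X$) is a once-punctured sphere, the homeomorphisms and isotopies produced live on $X$, not all of $S^2$, so $\psi_1$ and the isotopy fix the essential singularity (the point of $S^2 - X$) and in fact fix all of $S^2 - X$ pointwise automatically, which is why the isotopy is rel.\ $f^{-1}(A) \cup (S^2 - X)$, and this set contains $\overline{f^{-1}(A)}$ by the description of closures given just before the proposition. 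The main obstacle I anticipate is not any single deep step but rather handling the non-compact (punctured) case cleanly — ensuring the path-lifting argument of \cite{Rempe_Epstein} applies verbatim when the source may be a punctured sphere, and verifying that the lift extends continuously across, or correctly fixes, the essential singularity so that $\psi_1 \in \Homeo^+(S^2)$ rather than merely a homeomorphism of $X$.
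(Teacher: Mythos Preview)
Your proposal is correct but takes an unnecessarily indirect route. You first transport everything into the holomorphic category via Definition~\ref{def: top hol}(\ref{it: pulling back}) and then appeal to \cite[Proposition~2.3]{Rempe_Epstein} as a black box. The paper instead stays entirely in the topological category: since $S_f \subset A$, the restrictions $\varphi_t \circ f|Y \colon Y \to Z$ (with $Y = X - f^{-1}(A)$ and $Z = S^2 - A$) are covering maps for every $t$, so a standard isotopy-lifting lemma for coverings (the paper cites \cite[Lemma~2.7]{Astorg_Benini_Fagella}) directly yields $\phi_t \colon Y \to Y$ with $\varphi_t \circ f = \varphi_0 \circ f \circ \phi_t$ and $\phi_0 = \id_Y$. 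Each $\phi_t$ extends to $S^2$ because $S^2 - Y$ is discrete except possibly at the essential singularity, and one then sets $\psi_1 := \psi_0 \circ \phi_0^{-1} \circ \phi_1$, checking the semiconjugacy by a short algebraic manipulation using $\varphi_0 \circ f = \widetilde f \circ \psi_0$.

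The two arguments rest on the same structural fact---that $f$ restricted away from $f^{-1}(A)$ is a covering---so your holomorphic detour adds bookkeeping (the conjugations by $\varphi$, $\psi$, $\widetilde\psi$ and back) without adding content. Note also a minor difference in decomposition: you lift through $\widetilde g$ to produce $\Psi_t$ directly, whereas the paper lifts through $\varphi_0 \circ f$ to produce auxiliary maps $\phi_t$ on the domain of $f$ and only afterwards brings in $\psi_0$. Your route has the advantage of outsourcing the punctured-sphere subtleties to a citable result; the paper's route is more self-contained and handles the extension across the essential singularity explicitly, which is exactly the ``main obstacle'' you anticipated.
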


\begin{proof}
    Let $(\varphi_t)_{t \in \I}$ be the corresponding isotopy. From the definition of a singular value, it follows that the restrictions
    $\varphi_t \circ f | Y  \colon Y \to Z$ are covering maps for each $t \in \I$, where $Y := X - f^{-1}(A)$ and $Z := S^2 - A$. Therefore, \cite[Lemma 2.7]{Astorg_Benini_Fagella} implies the existence of an isotopy $(\phi_t)_{t \in \I}$ in~$Y$ such that $\varphi_t \circ f = \varphi_0 \circ f \circ \phi_t$. Each homeomorphism $\phi_t \colon Y \to Y$ extends to a self-homeomorphism of the entire sphere $S^2$ since all but at most one point of the set $S^2 - Y$ are isolated. Moreover, it is straightforward to check that $\phi_t|f^{-1}(A) \cup (S^2 - X) = \phi_0|f^{-1}(A) \cup (S^2 - X)$ for each $t \in \I$. In other words, the homotopy $(\phi_t)_{t \in \I}$ can be viewed as an isotopy in $S^2$ rel.\ $f^{-1}(A) \cup (S^2 - X)$.
    
    At the same time, $\varphi_0 \circ f = \varphi_0 \circ f \circ \phi_0$ and, therefore, we have the following:
    $$
        \varphi_1 \circ f = \varphi_0 \circ f \circ \phi_1 = (\varphi_0 \circ f \circ \phi_0) \circ \phi_0^{-1} \circ \phi_1 = \varphi_0 \circ f \circ \phi_0^{-1} \circ \phi_1 = \widetilde{f} \circ (\psi_0 \circ \phi_0^{-1} \circ \phi_1).
    $$
    Thus, we can set $\psi_1 := \psi_0 \circ \phi_0^{-1} \circ \phi_1$, and $(\psi_0 \circ \phi_0^{-1} \circ \phi_t)_{t \in \I}$ provides the required isotopy rel.\ $f^{-1}(A) \cup (S^2 - X)$. Clearly, if $p \in S^2$ is an accumulation point of the set $f^{-1}(A)$, then $p \not \in X$, which implies $\overline{f^{-1}(A)} \subset f^{-1}(A) \cup (S^2 - X)$.
    Finally, $\psi_1$ is orientation-preserving since $f$ and $\widetilde{f}$ are local orientation-preserving homeomorphisms outside the sets of their critical~points.
\end{proof} 

\begin{corollary}\label{corr: homotopy lifting for curves}
    Let $f \colon X \to S^2$ be a topologically holomorphic map of finite type, where $X = S^2$ or $X = S^2 - \{e\}$, $e \in S^2$, and $A \subset S^2$ be a finite set  containing $S_f$. Suppose that $\gamma_0$ is a simple closed curve in $S^2 - A$, and let $\widetilde{\gamma}_0 \subset f^{-1}(\gamma)$ be a simple closed curve with $\deg(f|\widetilde{\gamma}_0\colon \widetilde{\gamma}_0 \to \gamma_0) = d$. If $\gamma_1$ is a simple closed curve that is homotopic in $S^2 - A$ to $\gamma_0$, then there exists a simple closed curve $\widetilde{\gamma}_1 \subset f^{-1}(\gamma_1)$ such that $\widetilde{\gamma}_0$ and $\widetilde{\gamma}_1$ are homotopic in $X - f^{-1}(A) \subset S^2 - \overline{f^{-1}(A)}$ and $\deg(f|\widetilde{\gamma}_1\colon \widetilde{\gamma}_1 \to \gamma_1) = d$.
\end{corollary}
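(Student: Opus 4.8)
The plan is to deduce this from the isotopy lifting property of Proposition~\ref{prop: isotopy lifting property}, applied with $\widetilde f := f$ and $\varphi_0 = \psi_0 := \id_{S^2}$. The first step is to convert the given homotopy between the simple closed curves $\gamma_0$ and $\gamma_1$ in $S^2 - A$ into an \emph{ambient} isotopy of $S^2$ that is moreover the identity on $A$. For this I would invoke the classical fact that freely homotopic simple closed curves in a surface are ambient isotopic in that surface (Epstein); applying it to $\gamma_0, \gamma_1 \subset S^2 - A$ and then extending the resulting ambient isotopy of $S^2 - A$ across the (isolated) punctures — which is possible since the isotopy starts at the identity and hence fixes every end of $S^2 - A$ — produces an isotopy $(\varphi_t)_{t \in \I}$ in $\Homeo^+(S^2)$ relative to $A$ with $\varphi_0 = \id_{S^2}$ and $\varphi_1(\gamma_0) = \gamma_1$. (Orientation-preservation is automatic, as $\varphi_0 = \id_{S^2}$.)

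Next I would apply Proposition~\ref{prop: isotopy lifting property} to $f$, the data $\varphi_0 = \psi_0 = \id_{S^2}$ (so $\varphi_0 \circ f = f = f \circ \psi_0$), the finite set $A \supseteq S_f$, and $\varphi_1$ as above. It outputs a homeomorphism $\psi_1 \in \Homeo^+(S^2)$ with $\varphi_1 \circ f = f \circ \psi_1$ together with an isotopy $(\psi_t)_{t \in \I}$ from $\id_{S^2}$ to $\psi_1$ relative to $f^{-1}(A) \cup (S^2 - X) \supseteq \overline{f^{-1}(A)}$. I then set $\widetilde\gamma_1 := \psi_1(\widetilde\gamma_0)$, which is a simple closed curve. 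Because $d = \deg(f|\widetilde\gamma_0\colon \widetilde\gamma_0 \to \gamma_0) \geq 1$, the map $f$ carries $\widetilde\gamma_0$ onto $\gamma_0$, whence $f(\widetilde\gamma_1) = f(\psi_1(\widetilde\gamma_0)) = \varphi_1(f(\widetilde\gamma_0)) = \varphi_1(\gamma_0) = \gamma_1$; in particular $\widetilde\gamma_1 \subset f^{-1}(\gamma_1)$. Restricting $f \circ \psi_1 = \varphi_1 \circ f$ to $\widetilde\gamma_0$ gives $f|\widetilde\gamma_1 = (\varphi_1|\gamma_0) \circ (f|\widetilde\gamma_0) \circ (\psi_1|\widetilde\gamma_0)^{-1}$, a composite of $f|\widetilde\gamma_0$ with homeomorphisms of circles, so $\deg(f|\widetilde\gamma_1\colon \widetilde\gamma_1 \to \gamma_1) = d$.

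It remains to see that the isotopy $(\psi_t)$ witnesses a homotopy from $\widetilde\gamma_0$ to $\widetilde\gamma_1$ \emph{inside} $X - f^{-1}(A)$. Each $\psi_t$ fixes $f^{-1}(A) \cup (S^2 - X)$ pointwise, so it maps the complementary set $X - f^{-1}(A) = S^2 - \big(f^{-1}(A) \cup (S^2 - X)\big)$ onto itself; and $\widetilde\gamma_0 \subset f^{-1}(\gamma_0) \subset f^{-1}(S^2 - A) = X - f^{-1}(A)$, so $(\psi_t|_{X - f^{-1}(A)})_{t \in \I}$ is a homotopy in $X - f^{-1}(A)$ from $\id$ to a map sending $\widetilde\gamma_0$ to $\widetilde\gamma_1$. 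This is precisely the required homotopy, and the inclusion $X - f^{-1}(A) \subset S^2 - \overline{f^{-1}(A)}$ is the containment $\overline{f^{-1}(A)} \subset f^{-1}(A) \cup (S^2 - X)$ already observed in the proof of Proposition~\ref{prop: isotopy lifting property}.

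I expect the single non-routine ingredient to be the first step: promoting the given homotopy of simple closed curves to an ambient isotopy of $S^2$ rel.\ $A$ (the Epstein-type input). Everything afterward is formal manipulation of the relative isotopy produced by Proposition~\ref{prop: isotopy lifting property}, the main thing to be careful about being that all the homotopies and the curve $\widetilde\gamma_1$ genuinely stay inside $X - f^{-1}(A)$, which is forced by the ``rel.'' condition in that proposition.
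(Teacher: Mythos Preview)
Your proposal is correct and follows essentially the same approach as the paper: both promote the homotopy of curves to an ambient isotopy of $S^2$ rel.\ $A$ (you cite Epstein, the paper cites \cite[Theorem~A.3]{BuserGeometry} and \cite[Sections~1.2.5--1.2.6]{farb_margalit}), then apply Proposition~\ref{prop: isotopy lifting property} with $\widetilde f = f$ and $\varphi_0 = \psi_0 = \id_{S^2}$ and set $\widetilde\gamma_1 := \psi_1(\widetilde\gamma_0)$. Your write-up is in fact more careful than the paper's in verifying the degree condition and that the homotopy stays inside $X - f^{-1}(A)$.
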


\begin{proof}
    According to \cite[Theorem A.3]{BuserGeometry} (see also \cite[Sections 1.2.5 and 1.2.6]{farb_margalit}), there exists an isotopy $(\varphi_t)_{t \in \I}$ rel.\ $A$ in $S^2$ such that $\varphi_0 = \id_{X}$ and $\varphi_1(\gamma_0) = \gamma_1$. Since $\varphi_0 =~\id_{S^2}$ is orientation-preserving, then $\varphi_t$ is also orientation-preserving for each $t \in \I$. By Proposition~\ref{prop: isotopy lifting property}, there exists a homeomorphism $\psi_1 \in \Homeo^+(S^2, f^{-1}(A) \cup (S^2 - X))$ such that $\varphi_1 \circ f = f \circ \psi_1$. Thus, we can take $\widetilde{\gamma}_1 := \psi_1(\widetilde{\gamma}_0)$. Finally, $X - f^{-1}(A) \subset S^2 - \overline{f^{-1}(A)}$, since any accumulation point $p \in S^2$ of the set $f^{-1}(A)$ cannot be in $X$.
\end{proof}

Due to the Uniformization Theorem and item~\eqref{it: pulling back} of Definition~\ref{def: top hol}, in the case when $X = S^2$, a topologically holomorphic map $f \colon X \to S^2$ can be written as $f = \varphi \circ g \circ \psi^{-1}$, where $g \colon \widehat{\C} \to \widehat{\C}$ is a non-constant rational map and $\varphi, \psi \colon S^2 \to \widehat{\C}$ are orientation-preserving homeomorphisms. In fact, in this case $f \colon S^2 \to S^2$ is simply a \textit{branched self-covering} of $S^2$, which is always of finite type and has finite topological degree.

Similarly, in the case when $X = S^2 - \{e\}$, we can write $f$ as $\varphi \circ g \circ \psi^{-1}$ such that $g \colon R \to \widehat{\C}$ is a non-constant meromorphic map, $\varphi \colon S^2 \to \widehat{\C}$ and $\psi \colon S^2 - \{e\} \to R$ are orientation-preserving homeomorphisms, where $R = \C$ or $R = \D$. Suppose that the map $f$ is of finite type. Then the image of $\psi$ above does not depend on the choice of the homeomorphism~$\varphi$ (see \cite[pp.~3-4]{geometric_function_theory}; it essentially follows from Proposition \ref{prop: isotopy lifting property} and some well-known facts from the theory of \textit{quasiconformal mappings}). Thus, finite-type topologically holomorphic maps for which the image of $\psi$ is $\C$ are referred to as \textit{parabolic type} maps, while those for which the image of $\psi$ is $\D$ are called \textit{hyperbolic type} maps.

Further, we assume that every topologically holomorphic map $f\colon S^2 \dto S^2$ we consider either has no essential singularities or is a finite-type topologically holomorphic map of parabolic type. Definition~\ref{def: top hol} and Great Picard's Theorem imply that in any neighborhood of an essential singularity $e$ of such a map $f$, every value is attained infinitely often with at most two exceptions. In particular, $f$  can have at most two \textit{omitted values}, i.e., points~$p$ in $S^2$ such that the preimage $f^{-1}(p)$ is empty. Furthermore, each omitted value is an asymptotic value of~$f$. Additionally, observe that if $A \subset S^2$ is a finite set with $|A| \geq 3$ and $S_f \subset A$, then the~restriction
$$
    f|S^2 - \overline{f^{-1}(A)}\colon S^2 - \overline{f^{-1}(A)} \to S^2 - A
$$
is a covering map. Note that the closure $\overline{f^{-1}(A)}$ equals $f^{-1}(A)$ if $f \colon S^2 \to S^2$ has no essential singularity. Otherwise, $\overline{f^{-1}(A)}$ consists of $f^{-1}(A)$ and the essential singularity $e \in S^2$ of $f$ due to Great Picard's Theorem and the assumption $|A| \geq 3$.

We say that a topologically holomorphic map $f \colon S^2 \dto S^2$ is \textit{transcendental} if it has an essential singularity. Given our previous assumptions on the map $f$, this is equivalent to saying that $f$ has infinite topological degree. The map $f \colon S^2 \dto S^2$ is called \textit{entire} if either $f$ has finite topological degree and there exists a point $p \in S^2$ such that $f^{-1}(p) = \{p\}$ (in which case $f$ is called a \textit{topological polynomial}), or $f$ has infinite topological degree and $f^{-1}(e) = \emptyset$, where $e$ is the essential singularity of $f$. We can view entire topologically holomorphic maps as topologically holomorphic self-maps of~$\R^2$.

\subsection{Thurston maps} \label{subsec: thurston maps}
Let $f \colon S^2 \dto S^2$ be a topologically holomorphic map. Then the \textit{postsingular set} $P_f$ of the map $f$ is defined as
$$
    P_f := \{q \in S^2: q = f^{\circ n}(p) \text{ for some } n \geq 0 \text{ and } p \in S_f\}.
$$

In other words, the postsingular set $P_f$ is the union of all forward orbits of the singular values of $f$. It is worth noting that some of these orbits might terminate after several iterations if a singular value reaches the essential singularity of the map~$f$.

We say that $f \colon S^2 \dto S^2$ is \textit{postsingularly finite} (\textit{psf} in short) if the set $P_f$ is finite, i.e., $f$ has finitely many singular values and each of them eventually becomes periodic or lands on the essential singularity of~$f$ under the iteration. 
Postsingularly finite topologically holomorphic maps of finite degree are also called \textit{postcritically finite} (\textit{pcf} in short), and their \textit{postsingular values} are called \textit{postcritical}, as their singular values are always critical.

Now we are ready to state one of the key definitions of this section.

\begin{definition}\label{def: thurston map}
    A non-injective topologically holomorphic map $f\colon S^2 \dto S^2$ is called a \textit{Thurston map} if it is postsingularly finite and either $f$ has no essential singularity or $f$ is a parabolic type map.
    
    Given a finite set $A \subset S^2$ such that $P_f \subset A$ and every $a \in A$ is either the essential singularity of $f$ or $f(a) \in A$, we call the pair $(f, A)$ a \textit{marked Thurston map} and $A$ its~\textit{marked~set}.
\end{definition}

We often consider marked Thurston maps in the same way as usual Thurston maps and use the notation $f\colon (S^2, A) \righttoleftarrow$ while still assuming that $f$ might not be defined on the entire sphere $S^2$. If no specific marked set is mentioned, we assume it to be $P_f$. Note that when the marked set $A$ contains the essential singularity of the map $f$, the set $A$ is not forward invariant with respect to $f$ in the usual sense. However, if $|A| \geq 3$, then~$A \subset \overline{f^{-1}(A)}$.

The dynamics of a Thurston map on its marked set or some other finite subsets of $S^2$ can also be represented graphically, in a way that turns out to be useful in study. Suppose that $f \colon S^2 \dto S^2$ is a Thurston map and $A \subset S^2$ is a finite set such that every $a \in A$ is either the essential singularity of $f$ or $f(a) \in A$. Then the \textit{dynamical portrait} of the map $f$ on the set $A$ is a directed abstract graph with the vertex set $A$, where for each vertex $v \in A$ that is not the essential singularity of $f$, there is a unique directed edge from $v$ to $f(v)$, and if $v \in A$ is the essential singularity of $f$, there are no outgoing edges from $v$. If the set $A$ coincides with the postsingular set $P_f$, the dynamical portrait of $f$ on the set $A$ is called the \textit{postsingular portrait} of the Thurston map~$f$.

\begin{definition}\label{def: isotopy of thurston maps}
    Two Thurston maps $f_1\colon (S^2, A) \righttoleftarrow$ and $f_2 \colon (S^2, A) \righttoleftarrow$ are called \textit{isotopic} (\textit{rel.\ $A$}) if there exists $\phi \in \Homeo_0^+(S^2, A)$ such that $f_1 = f_2 \circ \phi$.
\end{definition}

\begin{remark}\label{rem: isotopy of thurston maps}
    Let $f_1\colon (S^2, A) \righttoleftarrow$ and $f_2 \colon (S^2, A) \righttoleftarrow$ be two Thurston maps satisfying the relation $\phi_1 \circ f_1 = f_2 \circ \phi_2$ for some $\phi_1, \phi_2 \in \Homeo_0^+(S^2, A)$. Then it follows from Proposition~\ref{prop: isotopy lifting property} that $f_1$ and $f_2$ are isotopic rel.\ $A$.
\end{remark}

The notion of isotopy for Thurston maps depends on their common marked set. Consequently, we sometimes refer to isotopy \textit{relative $A$} (or rel.\ $A$ for short) to specify which marked set is being considered. This applies to other notions introduced below that also depend on the choice of the marked set.

We say that two (marked) Thurston maps are \textit{combinatorially equivalent} if they are ``topologically conjugate up to isotopy'':

\begin{definition}\label{def: comb equiv}
    Two Thurston maps $f_1 \colon (S^2, A_1) \rto$ and $f_2\colon (S^2, A_2) \rto$ are called combinatorially (or \textit{Thurston}) equivalent if there exist two Thurston maps $\widetilde{f}_1 \colon (S^2, A_1) \rto$ and $\widetilde{f}_2\colon (S^2, A_2) \rto$ such that:
    \begin{itemize}
        \item $f_i$ and $\widetilde{f}_i$ are isotopic rel.\ $A_i$ for each $i = 1, 2$, and

        \item $\widetilde{f}_1$ and $\widetilde{f}_2$ are conjugate via a homeomorphsim $\phi \in \Homeo^+(S^2)$, i.e., $\phi \circ \widetilde{f}_1 = \widetilde{f}_2 \circ \phi$, such that $\phi(A_1) = A_2$.
    \end{itemize}
\end{definition}

\begin{remark}\label{rem: comb equiv}
    Definition \ref{def: comb equiv} can be reformulated in a more classical way. Thurston maps $f_1 \colon (S^2, A_1) \rto$ and $f_2\colon (S^2, A_2) \rto$ are combinatorially equivalent if and only if there exist two homeomorphisms $\phi_1, \phi_2 \in \Homeo^+(S^2)$ such that $\phi_1(A_1) = \phi_2(A_1) = A_2$, $\phi_1$ and $\phi_2$ are isotopic rel.\ $A$, and $\phi_1 \circ f_1 = f_2 \circ \phi_2$.
\end{remark}

\begin{remark}
    If $A_1 = P_{f_1}$ and $A_2 = P_{f_2}$, then the condition that $\phi(A_1) = A_2$ in Definition~\ref{def: comb equiv} and the condition $\phi_1(A_1) = \phi_2(A_1) = A_2$ in Remark \ref{rem: comb equiv} can be removed since they are automatically satisfied if all other conditions hold.
\end{remark}

A Thurston map $f\colon (S^2, A)\righttoleftarrow$ is said to be \textit{realized} if it is combinatorially equivalent to a postsingularly finite holomorphic map $g \colon (\widehat{\C}, P) \rto$. If $f \colon (S^2, A) \rto$ is not realized, we say that it is \textit{obstructed}.

Let $A \subset S^2$ be a finite set. We say that a simple closed curve $\gamma \subset S^2 - A$ is \textit{essential} in $S^2 - A$ if each connected component of $S^2 - \gamma$ contains at least two points of the set $A$. In other words, $\gamma$ is essential in $S^2 - A$ if it cannot be shrinked to a point via a~homotopy~in~$S^2 - A$.

\begin{definition}\label{def: levy cycle}
Let $f \colon (S^2, A) \rto$ be a Thurston map. We say that a simple closed curve~$\gamma$ \textit{forms a Levy cycle} for $f \colon (S^2, A) \rto$ if $\gamma$ is essential in $S^2 - A$ and there exists another simple closed curve $\widetilde{\gamma} \subset f^{-n}(\gamma)$ for some $n \geq 1$ such that $\gamma$ and $\widetilde{\gamma}$ are homotopic in $S^2 - A$ and $\deg(f^{\circ n}| \widetilde{\gamma} \colon \widetilde{\gamma} \to \gamma) = 1$.
\end{definition}

If $n = 1$ in Definition \ref{def: levy cycle}, then $\gamma$ is called a \textit{Levy fixed} (or \textit{Levy invariant}) curve. Levy fixed curve $\gamma$ is called \textit{weakly degenerate} if $f$ is injective on one of the connected components of~$S^2 - \widetilde{\gamma}$. If additionally the image of this connected component $U$ under $f$ contains the same points of the set $A$ as $U$, i.e., $U \cap A = f(U) \cap A$, we say that $\gamma$ is a \textit{degenerate} Levy fixed curve for the Thurston map $f \colon (S^2, A) \rto$.

The following observation is widely known in the context of finite degree Thurston maps \cite[Exercise 10.3.6]{Hubbard_Book_2}, and its proof extends to the case of transcendental Thurston maps as well (see Section \ref{subsec: levy cycles} for the proof).

\begin{proposition}\label{prop: levy cycles are obstructions}
    Let $f\colon (S^2, A) \rto$ be a Thurston map. If there exists a simple closed curve $\gamma \subset S^2$ forming a Levy cycle for $f \colon (S^2, A) \rto$, then $f$ is obstructed rel.\ $A$.
\end{proposition}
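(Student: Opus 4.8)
The plan is to argue by contradiction. Suppose the Thurston map $f\colon(S^2,A)\rto$ is realized, say combinatorially equivalent to a postsingularly finite holomorphic map $g\colon(\widehat{\C},P)\rto$. I would first transfer the Levy cycle of $f$ to $g$, and then contradict the holomorphicity of $g$ by a hyperbolic length comparison on suitable punctured spheres.

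\emph{Transferring the Levy cycle.} By Remark~\ref{rem: comb equiv} there exist orientation-preserving homeomorphisms $\phi_0,\phi_1\colon S^2\to\widehat{\C}$ with $\phi_0(A)=\phi_1(A)=P$, with $\phi_0$ isotopic to $\phi_1$ rel~$A$, and with $\phi_0\circ f=g\circ\phi_1$. Since $S_f\subset A$, one may iterate Proposition~\ref{prop: isotopy lifting property} (applied with $g$, regarded as a topologically holomorphic map of finite type, in the role of $\widetilde f$) to produce orientation-preserving homeomorphisms $\phi_2,\phi_3,\dots$ such that $\phi_k\circ f=g\circ\phi_{k+1}$ and $\phi_{k+1}$ is isotopic to $\phi_k$ rel~$A$ for every $k\ge 0$; consequently $\phi_0\circ f^{\circ n}=g^{\circ n}\circ\phi_n$ and $\phi_0$ is isotopic to $\phi_n$ rel~$A$. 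Put $\gamma':=\phi_0(\gamma)$ and $\widetilde\gamma':=\phi_n(\widetilde\gamma)$. Pushing the homotopy between $\gamma$ and $\widetilde\gamma$ forward by $\phi_0$, and pushing an isotopy between $\phi_0$ and $\phi_n$ rel~$A$ forward by $\widetilde\gamma$, one checks that $\gamma'$ is an essential simple closed curve in $\widehat{\C}-P$, that $\widetilde\gamma'\subset g^{-n}(\gamma')$ is homotopic to $\gamma'$ in $\widehat{\C}-P$, and that $g^{\circ n}|\widetilde\gamma'\colon\widetilde\gamma'\to\gamma'$ has degree~$1$. In other words, $\gamma'$ forms a Levy cycle for $g\colon(\widehat{\C},P)\rto$, and since $\gamma'$ is essential we have $|P|\ge 4$.

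\emph{The hyperbolic estimate.} Set $P_0:=P$, $P_{k+1}:=\overline{g^{-1}(P_k)}$, and $X_k:=\widehat{\C}-P_k$. Because each point of $P_0$ is either the essential singularity of $g$ or is mapped by $g$ into $P_0$, and $|P_0|\ge 4$, we obtain a nested sequence $X_0\supset X_1\supset\dots\supset X_n$ of hyperbolic Riemann surfaces in which, by the covering-map property of finite-type maps recalled in Section~\ref{subsec: topologically holomorphic maps}, every $g\colon X_{k+1}\to X_k$ is a holomorphic covering. Hence $(g^{\circ n})^{*}\rho_{X_0}=\rho_{X_n}$, so $g^{\circ n}\colon X_n\to X_0$ is a local isometry for the hyperbolic metrics, while the inclusion $\iota\colon X_n\hookrightarrow X_0$ is non-expanding. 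Note that $\widetilde\gamma'\subset g^{-n}(\gamma')\subset X_n$ and that $\widetilde\gamma'$ is essential in $X_n$ (it is essential in $X_0\supset X_n$ and $P\subset P_n$); let $\beta$ be the closed geodesic of $X_n$ freely homotopic to $\widetilde\gamma'$. Since $g^{\circ n}$ is a local isometry, $g^{\circ n}(\beta)$ is a closed geodesic of $X_0$, and by the degree-$1$ property it is freely homotopic to $\gamma'$ traversed once; combining this with $\beta\simeq\widetilde\gamma'\simeq\gamma'$ in $X_0$, the curves $\beta$ and $g^{\circ n}(\beta)$ are freely homotopic in $X_0$. Writing $\ell_X$ for hyperbolic length in $X$, this gives
\[
\ell_{X_0}\bigl(g^{\circ n}(\beta)\bigr)=\ell_{X_n}(\beta)\geq\ell_{X_0}(\beta)\geq\ell_{X_0}\bigl(g^{\circ n}(\beta)\bigr),
\]
where the equality uses that $g^{\circ n}$ is a local isometry, the first inequality that $\iota$ is non-expanding, and the second that $g^{\circ n}(\beta)$ is a geodesic and hence minimizes $X_0$-length in its free homotopy class. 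All three quantities therefore coincide; in particular $\rho_{X_n}$ and $\rho_{X_0}$ agree along $\beta$, so the rigidity clause of the Schwarz--Pick lemma forces $\iota$ to be a covering map, i.e.\ $X_n=X_0$, i.e.\ $\overline{g^{-1}(P)}=P$.

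\emph{Conclusion and main obstacle.} If $g$ has no essential singularity, then $g$ is a rational map of degree at least two and $P$ is a completely invariant finite set with $|P|\ge 4>2$, contradicting the bound on the exceptional set of a rational map. If $g$ has an essential singularity $e$, then $\overline{g^{-1}(P)}=g^{-1}(P)\cup\{e\}$, so $g^{-1}(P-\{e\})$ is finite, whereas $|P-\{e\}|\ge 3$ and, by Great Picard's theorem, a transcendental meromorphic map omits at most two values, so $g^{-1}(P-\{e\})$ must be infinite --- a contradiction. Hence $f$ is not realized, i.e.\ obstructed rel~$A$. The crux of the argument is the hyperbolic estimate: this is the only place where holomorphicity is genuinely used, through ``holomorphic covering $\Rightarrow$ local isometry'' together with the rigidity clause of the Schwarz--Pick lemma. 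The transfer step is routine given Proposition~\ref{prop: isotopy lifting property}, but in the transcendental case one must be careful to track the essential singularity through the auxiliary surfaces $X_k$ (so that $g\colon X_{k+1}\to X_k$ is indeed a covering) and to invoke Great Picard's theorem, rather than the exceptional-set bound for rational maps, in the final step.
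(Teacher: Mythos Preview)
Your proof is correct and follows essentially the same strategy as the paper's: transfer the Levy cycle to the holomorphic model $g$, then exploit that $g$ restricts to a hyperbolic isometry from $\widehat{\C}-\overline{g^{-1}(P)}$ onto $\widehat{\C}-P$ while the inclusion is contracting, forcing $\overline{g^{-1}(P)}=P$ and hence a contradiction via Riemann--Hurwitz or Great Picard. The only differences are cosmetic: the paper reduces to $n=1$, chooses its geodesic in $\widehat{\C}-P$ rather than in $X_n$, and obtains the strict inequality $\ell_{\widehat{\C}-P}(\delta)>\ell_{\widehat{\C}-P}(\widetilde{\delta})$ directly (since $\overline{g^{-1}(P)}\supsetneq P$ is established up front), whereas you run the chain with equalities and invoke the equality case of Schwarz--Pick at the end.
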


We require one more notion of equivalence between Thurston maps.

\begin{definition}\label{def: huwritz equiv}
    We say that two Thurston maps $f_1\colon (S^2, A)\rto$ and $f_2 \colon (S^2, A) \rto$ are \textit{pure Hurwitz equivalent} (or simply \textit{Hurwitz equivalent}) if there exist two homeomorphisms $\phi_1, \phi_2 \in \Homeo^+(S^2, A)$ such that $\phi_1 \circ f_1 = f_2 \circ \phi_2$. 
\end{definition}

If $f \colon (S^2, A) \rto$ is a Thurston map, then the \textit{Hurwitz class} $\Hurw_{f, A}$ of $f$ is the union of all Thurston maps with the marked set $A$ that are Hurwitz equivalent to $f$. If $A$ coincides with the postsingular set $P_f$ of $f$, we simply use the notation~$\Hurw_f$. We say that a Thurston map $f \colon (S^2, A) \rto$ is \textit{totally unobstructed}~if~every~Thurston~map~in~$\Hurw_{f, A}$~is~unobstructed.

\begin{remark}\label{rem: at most three postsingular values}
    According to \cite[Proposition 2.3]{farb_margalit}, if two orientation-preserving homomeomorphisms $\varphi \colon S^2 \to \widehat{\C}$ and $\psi \colon S^2 \to \widehat{\C}$ agree on the set $A \subset S^2$ with $|A| \leq 3$, they are isotopic rel.\ $A$. This observation can be used to show that any Thurston map $f\colon (S^2, A) \rto$ is realized when the marked set $A$ contains three or fewer points. Similarly, in thise case, one can show that $\Hurw_{f, A}$ consists of a single Thurston map up to a combinatorial equivalence rel.\ $A$. However, when $|A| = 4$, the question of realizability already becomes significantly more challenging. 
\end{remark}



\subsection{Teichm\"uller and moduli spaces} \label{subsec: teichmuller spaces}

Let $A \subset S^2$ be a finite set containing at least three points. Then the \textit{Teichm\"{u}ller space of the sphere $S^2$ with the marked set $A$} is defined as
$$
    \T_A := \{\varphi\colon S^2 \rightarrow \widehat{\C} \text{ is an orientation-preserving homeomorphism}\} / \sim
$$
where $\varphi_1 \sim \varphi_2$ if there exists a M\"obius transformation $M$ such that $\varphi_1$ is isotopic rel.\ $A$ to~$M \circ \varphi_2$. 

Similarly, we define the \textit{moduli space of the sphere $S^2$ with the marked set $A$}:
$$
    \M_A := \{\eta\colon A \rightarrow \widehat{\C} \text{ is injective}\} / \sim,
$$
where $\eta_1 \sim \eta_2$ if there exists a M\"obius transformation $M$ such that $\eta_1 = M \circ \eta_2$. 

Further, $[\cdot]$ denotes an equivalence class corresponding to a point of either the Teichm\"uller space $\T_A$ or the moduli space $\M_A$.
Note that there is an obvious map $\pi \colon \T_A \to \M_A$ defined as $\pi([\varphi]) = [\varphi|A]$. According to \cite[Proposition 2.3]{farb_margalit}, when $|A| = 3$, both the Teichm\"uller space $\T_A$ and the moduli space $\M_A$ are just single points. Therefore, for the rest of this section, we assume that $|A| \geq 4$. 

It is known that the Teichm\"uller space $\T_A$ admits a complete metric $d_T$, known as the \textit{Teichm\"uller metric} \cite[Proposition~6.4.4]{Hubbard_Book_1}. Moreover, with respect to the topology induced by this metric, $\T_A$ is a contractible space \cite[Corollary 6.7.2]{Hubbard_Book_1}.  At the same time, both $\T_A$ and $\M_A$ admit structures of $(|A| - 3)$-complex manifolds (see \cite[Theorem 6.5.1]{Hubbard_Book_1}) so that the map $\pi \colon \T_A \to \M_A$ becomes a holomorphic universal covering map \cite[Section 10.9]{Hubbard_Book_2}. 

Moreover, the complex structure of $\M_A$ is quite explicit in the general case. Let $A = \{a_1, a_2, \dots, a_k, a_{k + 1}, a_{k + 2}, a_{k + 3}\}$, $ k \geq 1$, where the indexing of the points of $A$ is chosen arbitrarily. Define the map $h \colon \M_A \to \C^{k} - \L_k$ by
$$
    h([\varphi]) = (\varphi(a_1), \varphi(a_2), \dots, \varphi(a_k)),
$$
where the representative $\varphi \colon S^2 \to \widehat{\C}$ is chosen so that $\varphi(a_{k + 1}) = 0, \varphi(a_{k + 2}) = 1$, and $\varphi(a_{k + 3}) = \infty$, and where $\L_k$ is the subset of $\C^k$ defined by
$$
    \L_k := \{(z_1, z_2, \dots, z_k) \in \C^k: z_i = z_j \text{ for some $i \neq j$}, \text{ or } z_i = 0, \text{ or } z_i = 1\}.
$$
It is known that the map $h$ provides a biholomorphism between $\M_A$ and $\C^k - \L_k$ (see \cite[Section 10.9]{Hubbard_Book_2}).

Our focus in this paper is on the case when $|A| = 4$. In this situation, the Teichm\"uller space~$\T_A$ is biholomorphic to~$\D$, with the metric $d_T$ coinciding with the usual hyperbolic metric on $\D$; see \cite[Corollary 6.10.3 and Theorem 6.10.6]{Hubbard_Book_1}. Furthermore, the moduli space~$\M_A$ is biholomorphic to the three-punctured Riemann sphere $\Sigma$.

\subsection{Pullback maps} \label{subsec: pullback maps}
In this section, we illustrate how the notions introduced in Section~\ref{subsec: teichmuller spaces} can be applied for studying the properties of Thurston maps. Most importantly, using Definition~\ref{def: top hol} and Proposition \ref{prop: isotopy lifting property}, we can introduce the following crucial concept (see \cite[Proposition 2.21]{our_approx} for the proof; note that this is where the parabolic type condition in the Definition \ref{def: thurston map} plays a crucial role).

\begin{proposition}\label{prop: def of sigma map}
    Suppose that $f \colon (S^2, A) \righttoleftarrow$ is a Thurston map, or $f\in \Homeo^+(S^2)$ and $f(A) = A$, where $3 \leq |A| < \infty$. Let $\varphi \colon S^2 \to \widehat{\C}$ be an orientation-preserving homeomorphism. Then there exists an orientation-preserving homeomorphism $\psi\colon S^2 \to \widehat{\C}$ such that $g_\varphi := \varphi \circ f \circ \psi^{-1}\colon \widehat{\C} \dto \widehat{\C}$ is holomorphic. In other words, the following diagram commutes
    $$
    \begin{tikzcd}
        (S^2, A) \arrow[r,"\psi"] \arrow[d,"f", dashed] & (\widehat{\C}, \psi(A)) \arrow[d,"g_\varphi", dashed]\\
     (S^2, A) \arrow[r,"\varphi"] & (\widehat{\C}, \varphi(A))
    \end{tikzcd}
    $$
    The homeomorphism $\psi$ is unique up to post-composition with a M\"obius transformation. Different choices of $\varphi$ that represent the same point in $\T_A$ yield maps $\psi$ that represent the same point in $\T_A$. 
    
    In other words, we have a well-defined map $\sigma_f \colon \T_A \to \T_A$ such that $\sigma_f([\varphi]) = [\psi]$, called the pullback map (or the $\sigma$-map) associated with the Thurston map $f \colon (S^2, A) \rto$. As $\varphi$ ranges across all maps representing a single point in $\T_A$, the map $g_\varphi$ is uniquely defined up to pre- and post-composition with M\"obius transformations.
\end{proposition}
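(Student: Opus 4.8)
The plan is to construct $\psi$ using item~\eqref{it: pulling back} of Definition~\ref{def: top hol}, and then to verify the uniqueness and well-definedness statements using Proposition~\ref{prop: isotopy lifting property}. First I would apply item~\eqref{it: pulling back} of Definition~\ref{def: top hol} to the topologically holomorphic map $f \colon S^2 \dto S^2$ (or to the homeomorphism $f$, which is trivially topologically holomorphic) together with the orientation-preserving homeomorphism $\varphi \colon S^2 \to \widehat{\C}$ playing the role of the chart on the target. This produces a Riemann surface structure on the source sphere, i.e. an orientation-preserving homeomorphism $\psi \colon S^2 \to \widehat{\C}$ (here we use the Uniformization Theorem to identify the resulting simply connected Riemann surface with $\widehat\C$, since $S^2$ is compact; if $f$ has an essential singularity at $e$, then $\psi$ extends across the puncture $S^2 - \{e\}$ precisely because $f$ is of \emph{parabolic type}, so the source is $\C$, which compactifies to $\widehat\C$), such that $g_\varphi := \varphi \circ f \circ \psi^{-1} \colon \widehat\C \dto \widehat\C$ is holomorphic. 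This is exactly the point where the parabolic type hypothesis in Definition~\ref{def: thurston map} is essential: a hyperbolic type map would yield source $\D$, which does not compactify to $\widehat\C$.

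Next I would establish that $\psi$ is unique up to post-composition with a M\"obius transformation. Suppose $\psi_1, \psi_2 \colon S^2 \to \widehat\C$ both make the diagram commute, with holomorphic $g_1 := \varphi \circ f \circ \psi_1^{-1}$ and $g_2 := \varphi \circ f \circ \psi_2^{-1}$. Then $\psi_2 \circ \psi_1^{-1} \colon \widehat\C \to \widehat\C$ is an orientation-preserving homeomorphism satisfying $g_2 \circ (\psi_2 \circ \psi_1^{-1}) = g_1$; the remark after Definition~\ref{def: top hol} (``$\psi$ is defined uniquely up to post-composition with a conformal automorphism of $S_X$'') shows $\psi_2 \circ \psi_1^{-1}$ is a conformal automorphism of $\widehat\C$, hence a M\"obius transformation. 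Alternatively, one argues directly: $\psi_2 \circ \psi_1^{-1}$ is holomorphic away from the discrete branch set and continuous everywhere, hence holomorphic on all of $\widehat\C$, hence M\"obius.

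The main obstacle is the claim that different representatives $\varphi_1 \sim \varphi_2$ of the same point of $\T_A$ yield $\psi_1 \sim \psi_2$ in $\T_A$; this is where Proposition~\ref{prop: isotopy lifting property} does the real work. Write $\varphi_1 = M \circ \varphi_2'$ where $\varphi_2'$ is isotopic rel.\ $A$ to $\varphi_2$ and $M$ is M\"obius. Replacing $\varphi_2$ by $\varphi_2'$ (which changes $\psi_2$ only by the M\"obius factor coming from Uniformization applied to the new target chart, harmless in $\T_A$) and then absorbing $M$ (post-composing $\varphi$ by $M$ just post-composes $g_\varphi$ by $M$ and leaves $\psi$ unchanged), we reduce to: $\varphi_1$ isotopic rel.\ $A$ to $\varphi_2$, with associated $\psi_1, g_1$ and $\psi_2, g_2$. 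Now the relation $\varphi_i \circ f = g_i \circ \psi_i$ exhibits exactly the hypotheses of Proposition~\ref{prop: isotopy lifting property} with $\widetilde f = g_2$ (topologically holomorphic of finite type on $\widehat\C$ or $\C$), $\varphi_0 = \varphi_2$, $\psi_0 = \psi_2$, $\varphi_1 = \varphi_1$: more precisely, from $\varphi_2 \circ f = g_2 \circ \psi_2$ and $\varphi_1$ isotopic rel.\ $A \supset S_f$ to $\varphi_2$, Proposition~\ref{prop: isotopy lifting property} produces $\psi_1'$ isotopic rel.\ $\overline{f^{-1}(A)} \supset A$ (using $|A| \geq 3$) to $\psi_2$ with $\varphi_1 \circ f = g_2 \circ \psi_1'$. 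But $\psi_1'$ and $\psi_1$ both satisfy the defining relation for $\varphi_1$, so by the uniqueness established above $\psi_1 = N \circ \psi_1'$ for some M\"obius $N$; hence $\psi_1$ is isotopic rel.\ $A$ to $N \circ \psi_2$, i.e. $[\psi_1] = [\psi_2]$ in $\T_A$. This makes $\sigma_f([\varphi]) := [\psi]$ well-defined. Finally, the statement that $g_\varphi$ is determined up to pre- and post-composition with M\"obius transformations as $\varphi$ ranges over a point of $\T_A$ follows by combining the two uniqueness facts: post-composition ambiguity comes from the choice of M\"obius representative of $[\varphi]$, and pre-composition ambiguity from the M\"obius ambiguity in $\psi$.
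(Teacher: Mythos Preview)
The paper does not prove this proposition in-text; it defers to \cite[Proposition~2.21]{our_approx}. Your outline follows exactly the strategy that reference would use: invoke item~\eqref{it: pulling back} of Definition~\ref{def: top hol} together with Uniformization and the parabolic-type hypothesis to produce $\psi$, then use Proposition~\ref{prop: isotopy lifting property} for well-definedness of $\sigma_f$. The key ingredients (parabolic type ensures the source compactifies to $\widehat{\C}$; uniqueness of $\psi$ up to M\"obius from the remark after Definition~\ref{def: top hol}; $A \subset \overline{f^{-1}(A)}$ since $|A| \geq 3$) are all correctly identified.

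One expository glitch: in your reduction step you write ``Replacing $\varphi_2$ by $\varphi_2'$ (which changes $\psi_2$ only by the M\"obius factor coming from Uniformization applied to the new target chart, harmless in $\T_A$)''. This parenthetical is circular as stated: that replacing an isotopic $\varphi$ changes $\psi$ only within its $\T_A$-class is precisely the content you are trying to establish via Proposition~\ref{prop: isotopy lifting property}, not something you can invoke beforehand. The clean order is: first observe that post-composing $\varphi$ by a M\"obius $M$ leaves $\psi$ literally unchanged (only $g_\varphi$ becomes $M \circ g_\varphi$), reducing to the purely isotopic case $\varphi_1$ isotopic rel.\ $A$ to $\varphi_2$; then apply Proposition~\ref{prop: isotopy lifting property} exactly as you do in the following sentences. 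You in fact carry out this second step correctly, so the argument is sound once the misleading parenthetical is deleted and the two cases are handled in the right order.
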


\begin{remark}\label{rem: functoriality}
    Let $\phi \colon S^2 \to S^2$ be an orientation-preserving homeomorphism with $\phi(A) = A$ and $|A| \geq 3$. It is straightforward to verify that if $\tau = [\varphi] \in \T_A$, then $\sigma_{\phi}([\varphi]) = [\varphi \circ \phi]$. Moreover, if $f \colon (S^2, A) \rto$ is a Thurston map, it is easy to see that $\sigma_{\phi \circ f} = \sigma_{f} \circ \sigma_{\phi}$ and $\sigma_{f \circ \phi} = \sigma_{\phi} \circ \sigma_{f}$.
\end{remark}

\begin{proposition}\label{prop: dependence}
    Suppose that we are in the setting of Proposition \ref{prop: def of sigma map}. If there exists a subset $B \subset A$, such that $S_f \subset B$ and $|B| = 3$, then $g_\varphi$, up to pre-composition with a M\"obius transformation, depends only on $\varphi|B$. Furthermore, if there exists a subset $C \subset A$ such that $|C| = 3$ and $C \subset \overline{f^{-1}(B)}$, then $g_\varphi$ is uniquely determined by $\varphi|B$ and $\psi|C$, and if $A \subset \overline{f^{-1}(B)}$, then $\sigma_f$ is a constant map.
\end{proposition}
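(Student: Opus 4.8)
The plan is to track how the uniqueness statement in Proposition \ref{prop: def of sigma map} (``$\psi$ is unique up to post-composition with a Möbius transformation'', and symmetrically the freedom in $\varphi$) interacts with the restriction of $\varphi$ to the small subsets $B$ and $C$. First I would recall that by Remark \ref{rem: at most three postsingular values} (via \cite[Proposition 2.3]{farb_margalit}), any two orientation-preserving homeomorphisms $S^2 \to \widehat{\C}$ that agree on a set of size $\le 3$ are isotopic relative to that set. So suppose $\varphi_1, \varphi_2 \colon S^2 \to \widehat{\C}$ are two representatives with $\varphi_1|B = \varphi_2|B$. Then $\varphi_1$ and $\varphi_2$ are isotopic rel.\ $B$, hence $\varphi_2 \circ \varphi_1^{-1} \colon \widehat{\C} \to \widehat{\C}$ is isotopic rel.\ $\varphi_1(B)$ to the identity. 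Since $S_f \subset B$, the singular set $S_{g_{\varphi_1}}$ of the holomorphic map $g_{\varphi_1}$ is contained in $\varphi_1(B)$ (singular values of $g_\varphi = \varphi \circ f \circ \psi^{-1}$ are images under $\varphi$ of singular values of $f$). Now apply the isotopy lifting property (Proposition \ref{prop: isotopy lifting property}) to the relation $\varphi_1 \circ f = g_{\varphi_1} \circ \psi_1$: since $\varphi_2 \circ \varphi_1^{-1}$ is isotopic rel.\ a finite set containing $S_{g_{\varphi_1}}$ to the identity, we get $(\varphi_2\circ\varphi_1^{-1}) \circ g_{\varphi_1} = g_{\varphi_1} \circ \theta$ for some $\theta \in \Homeo^+(\widehat{\C})$, equivalently $\varphi_2 \circ f = g_{\varphi_1} \circ (\theta \circ \psi_1)$. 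This exhibits $g_{\varphi_1}$ itself as a valid choice of holomorphic map for the representative $\varphi_2$, so by the uniqueness clause of Proposition \ref{prop: def of sigma map}, $g_{\varphi_2}$ equals $g_{\varphi_1}$ up to pre- and post-composition with Möbius transformations; and tracing the post-composition factor, which is $\varphi_2 \circ \varphi_1^{-1}$ restricted to the relevant marked points and must be conformal there, one pins down the post-composition to be trivial, leaving only pre-composition freedom. This gives the first assertion.

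For the second assertion, assume in addition $C \subset A$ with $|C| = 3$ and $C \subset \overline{f^{-1}(B)}$. The point is that the pre-composition ambiguity in $g_\varphi$ is controlled by $\psi|C$. Concretely, if $g_1 = g_\varphi$ and $g_2 = g_\varphi \circ N$ for a Möbius transformation $N$ are both valid, with $\psi_1, \psi_2$ the corresponding homeomorphisms, then $\varphi \circ f = g_1 \circ \psi_1 = g_2 \circ \psi_2 = g_1 \circ (N \circ \psi_2)$; by uniqueness of $\psi$ up to post-composition with a Möbius map we get $\psi_1 = M \circ N \circ \psi_2$ for some Möbius $M$. Restricting to $C$: because $C \subset \overline{f^{-1}(B)}$ and $\varphi|B$ is fixed, one checks (using that $f$ maps $\overline{f^{-1}(B)}$ into $B$, interpreting the essential singularity appropriately) that $g_1 \circ \psi_1 |C$ and $g_1 \circ \psi_2 | C$ are both determined by $\varphi|B$, forcing $g_1(\psi_1(C)) = g_1(\psi_2(C))$ pointwise; combined with $g_1$ being a local homeomorphism and $\psi_i$ being homeomorphisms, this constrains $N$ enough that prescribing $\psi|C$ fixes $g_\varphi$ exactly (no remaining Möbius freedom). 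Finally, if $A \subset \overline{f^{-1}(B)}$, then $\psi|A$ is itself determined by $\varphi|B$ in the same way, so $\sigma_f([\varphi]) = [\psi]$ does not depend on $[\varphi]$ at all, i.e.\ $\sigma_f$ is constant.

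The main obstacle I anticipate is the bookkeeping of the Möbius ambiguities: at each stage $g_\varphi$ and $\psi$ are only defined up to Möbius transformations on each side, and one must carefully argue that restricting $\varphi$ to $B$ (three points) kills the post-composition freedom in $g_\varphi$, while restricting $\psi$ to $C$ (three points) kills the pre-composition freedom — using repeatedly that a Möbius transformation fixing three points is the identity. A secondary subtlety is the treatment of the essential singularity: when $f$ is transcendental, $\overline{f^{-1}(B)}$ contains the essential singularity $e$, and one must make sure the statements ``$f$ maps points of $\overline{f^{-1}(B)}$ into $B$'' and the covering/lifting arguments are read correctly at $e$ (where $g_\varphi$ has its own essential singularity $\psi(e)$), which is exactly where the finite-type and parabolic-type hypotheses built into Definition \ref{def: thurston map} and used in Proposition \ref{prop: isotopy lifting property} do their work.
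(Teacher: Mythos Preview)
Your first paragraph is essentially the paper's argument: write $g_{\varphi_1}=(\varphi_1\circ\varphi_2^{-1})\circ g_{\varphi_2}\circ(\psi_2\circ\psi_1^{-1})$, use that $\varphi_1\circ\varphi_2^{-1}$ is isotopic rel.\ $\varphi_1(B)$ to $\id_{\widehat\C}$, lift via Proposition~\ref{prop: isotopy lifting property}, and observe that the resulting relation $g_{\varphi_1}=g_{\varphi_2}\circ\psi$ forces $\psi$ to be M\"obius. (Your detour through ``tracing the post-composition factor'' is unnecessary: once you have $\varphi_2\circ f = g_{\varphi_1}\circ(\theta\circ\psi_1)$, the uniqueness clause of Proposition~\ref{prop: def of sigma map} gives $\theta\circ\psi_1 = M\circ\psi_2$ for some M\"obius $M$, hence directly $g_{\varphi_2}=g_{\varphi_1}\circ M^{-1}$ with no post-composition term appearing.)

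The second and third assertions, however, have a genuine gap. You argue with a \emph{fixed} $\varphi$ and try to pin down the pre-composition M\"obius $N$ by comparing values $g_1(\psi_1(c))$ and $g_1(\psi_2(c))$ and invoking that ``$g_1$ is a local homeomorphism''. This does not work: points of $C\subset\overline{f^{-1}(B)}$ may well be critical points of $f$ (so $g_1$ is not locally injective at $\psi_1(c)$) or even the essential singularity, and in any case equality of $g_1$-images never forces equality of arguments. More importantly, the statement concerns varying $\varphi$ subject only to $\varphi|B$ fixed, and in that situation the M\"obius factor $\psi$ relating $g_{\varphi_1}$ and $g_{\varphi_2}$ is \emph{not} a priori related to $\psi_1,\psi_2$ on $C$ --- this is precisely where the hypothesis $C\subset\overline{f^{-1}(B)}$ enters, and you never use it. The missing ingredient is the \emph{relative} clause in Proposition~\ref{prop: isotopy lifting property}: the lift $\psi$ obtained there is isotopic to $\psi_2\circ\psi_1^{-1}$ \emph{rel.}\ $\psi_1(\overline{f^{-1}(B)})$, so in particular $\psi|\psi_1(C)=(\psi_2\circ\psi_1^{-1})|\psi_1(C)$. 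Now if $\psi_1|C=\psi_2|C$, this forces the M\"obius map $\psi$ to fix the three points $\psi_1(C)$, hence $\psi=\id_{\widehat\C}$ and $g_{\varphi_1}=g_{\varphi_2}$. The same relative-isotopy information, applied when $A\subset\overline{f^{-1}(B)}$, shows that $\psi_2\circ\psi_1^{-1}$ is isotopic rel.\ $\psi_1(A)$ to $\id_{\widehat\C}$, giving $[\psi_1]=[\psi_2]$ in $\T_A$ and hence constancy of $\sigma_f$. Your sketch ``$\psi|A$ is itself determined by $\varphi|B$'' is not correct as stated (only the Teichm\"uller class is determined) and does not supply this argument.
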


\begin{proof}
    Suppose that $\varphi_1, \varphi_2, \psi_1, \psi_2 \colon S^2 \to \widehat{\C}$ are orientation-preserving homeomorphisms such that $\varphi_1|B = \varphi_2|B$, and the maps $g_{\varphi_1} = \varphi_1 \circ f \circ \psi_1^{-1}$ and $g_{\varphi_2} = \varphi_2 \circ f \circ \psi_2^{-1}$ are holomorphic possibly outside of single points in $\widehat{\C}$. One can easily see that we have the following:
    $$
        g_{\varphi_1} = (\varphi_1 \circ \varphi_2^{-1}) \circ g_{\varphi_2} \circ (\psi_2 \circ \psi_1^{-1}),
    $$
    where the homeomorphism $\varphi := \varphi_1 \circ \varphi_2^{-1}$ fixes each point of the set $\varphi_1(B) = \varphi_2(B)$. Since $|B| = 3$, \cite[Proposition 2.3]{farb_margalit} implies that $\varphi$ is isotopic rel.\ $\varphi_1(B)$ to $\id_{\widehat{\C}}$. According to Proposition \ref{prop: isotopy lifting property}, this isotopy can be lifted, leading to the relation $g_{\varphi_1} = g_{\varphi_2} \circ \psi$, where $\psi \in \Homeo^+(S^2)$. It is easy to see that the homeomorphism $\psi$ is~a~M\"obius~transformation.

    Now, suppose that there exists a subset $C \subset A$ such that $|C| = 3$ and $C \subset \overline{f^{-1}(B)}$, and $\psi_1|C = \psi_2|C$. Note that the homeomorphism $\psi$ is isotopic rel.\ $\overline{g_{\varphi_1}^{-1}(\varphi_1(B))} = \psi_1(\overline{f^{-1}(B)})$ to $\psi_2 \circ \psi_1^{-1}$ due to Proposition~\ref{prop: isotopy lifting property}. Consequently, $\psi|\psi_1(C) = (\psi_2 \circ \psi_1^{-1})|\psi_1(C) = \id_{\psi_1(C)}$. Since $\psi$ is a M\"obius transformation fixing three distinct points in $\widehat{\C}$, it must be the identity~$\id_{\widehat{\C}}$. Thus, the maps $g_{\varphi_1}$ and $g_{\varphi_2}$ coincide. 
    
    If $A \subset \overline{f^{-1}(B)}$, then $\psi_2 \circ \psi_1^{-1}$ is isotopic rel.\ $\psi_1(A)$ to the M\"obius transformation $\psi =~\id_{\widehat{\C}}$. Thus, $\sigma_f([\varphi_1]) = [\psi_1] = [\psi_2] = \sigma_f([\varphi_2])$ in the Teichm\"uller space $\T_A$, and the rest follows.
\end{proof}

The following observation provides the most crucial property of pullback maps (see \cite[Proposition 2.24]{our_approx} for the proof).

\begin{proposition}\label{prop: fixed point of sigma}
    A Thurston map $f\colon (S^2, A) \righttoleftarrow$ with $|A| \geq 3$ is realized if and only if the pullback map~$\sigma_f$ has a fixed point in the Teichm\"uller space $\T_A$.
\end{proposition}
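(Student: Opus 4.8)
The plan is to prove both implications by directly unwinding the definition of the pullback map from Proposition~\ref{prop: def of sigma map} together with the notions of isotopy and combinatorial equivalence of Thurston maps (Definitions~\ref{def: isotopy of thurston maps} and~\ref{def: comb equiv} and Remark~\ref{rem: comb equiv}). When $|A| = 3$ there is nothing to prove: $\T_A$ is a single point, so $\sigma_f$ trivially has a fixed point, and $f$ is realized by Remark~\ref{rem: at most three postsingular values}. So one may assume $|A| \geq 4$, although this is not really used below.

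For the forward implication, suppose $f$ is realized. By Remark~\ref{rem: comb equiv} there are homeomorphisms $\phi_1, \phi_2 \in \Homeo^+(S^2)$ that are isotopic rel.\ $A$, with $\phi_1(A) = \phi_2(A)$, and such that $\phi_1 \circ f = g \circ \phi_2$ for some postsingularly finite holomorphic map $g \colon \widehat{\C} \dto \widehat{\C}$. Set $\tau := [\phi_1] = [\phi_2] \in \T_A$; the two classes agree because $\phi_1$ and $\phi_2$ are isotopic rel.\ $A$. Since $\phi_1 \circ f \circ \phi_2^{-1} = g$ is holomorphic, the homeomorphism $\psi = \phi_2$ may be used in the commutative diagram of Proposition~\ref{prop: def of sigma map} for the representative $\varphi = \phi_1$, and hence $\sigma_f(\tau) = [\phi_2] = \tau$.

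For the reverse implication, suppose $\sigma_f(\tau) = \tau$ and write $\tau = [\varphi]$. Pick $\psi \colon S^2 \to \widehat{\C}$ as in Proposition~\ref{prop: def of sigma map}, so $g_\varphi := \varphi \circ f \circ \psi^{-1} \colon \widehat{\C} \dto \widehat{\C}$ is holomorphic; the equality $\sigma_f(\tau) = [\psi] = [\varphi]$ means that $\psi$ is isotopic rel.\ $A$ to $M \circ \varphi$ for some M\"obius transformation $M$. Put $\phi := \psi^{-1} \circ M \circ \varphi \in \Homeo^+_0(S^2, A)$, so that $f \circ \phi$ is a Thurston map isotopic to $f$ rel.\ $A$ (Definition~\ref{def: isotopy of thurston maps}); moreover $P_{f \circ \phi} = P_f$, since $\phi$ fixes the set $A \supseteq P_f$ pointwise, so that $(f \circ \phi)^{\circ n}$ and $f^{\circ n}$ agree on $S_f$ for every $n$ for which the orbit is defined. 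A direct computation from $g_\varphi = \varphi \circ f \circ \psi^{-1}$ gives
$$
    (M \circ \varphi) \circ (f \circ \phi) = (M \circ g_\varphi) \circ (M \circ \varphi),
$$
i.e.\ the homeomorphism $h := M \circ \varphi$ conjugates $f \circ \phi$ to the holomorphic map $M \circ g_\varphi \colon \widehat{\C} \dto \widehat{\C}$. Taking $\widetilde f_1 := f \circ \phi$ and $\widetilde f_2 := M \circ g_\varphi$ (with marked set $h(A)$) in Definition~\ref{def: comb equiv} shows that $f$ is combinatorially equivalent to $M \circ g_\varphi$, which is postsingularly finite because it is topologically conjugate to $f \circ \phi$ and $P_{f \circ \phi} = P_f$ is finite. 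Hence $f$ is realized.

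The routine parts are the two short computations: the choice $\psi = \phi_2$ in the first direction and the conjugacy identity $h \circ (f \circ \phi) \circ h^{-1} = M \circ g_\varphi$ in the second. The point requiring the most care is the legitimacy of replacing the a priori only-up-to-isotopy representative $\psi$ of $\sigma_f(\tau)$ by the explicit homeomorphism $M \circ \varphi$ without losing holomorphicity of the associated map --- this is precisely what makes $\sigma_f$ well defined on $\T_A$ in Proposition~\ref{prop: def of sigma map}, and it ultimately relies on the isotopy lifting property (Proposition~\ref{prop: isotopy lifting property}), which is also the place where the parabolic type hypothesis in Definition~\ref{def: thurston map} is needed. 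A secondary subtlety is checking that the holomorphic map produced in the reverse direction is genuinely \emph{postsingularly finite} and not merely holomorphic, which is why one records that $\phi$ is the identity on $A$ and uses the conjugacy-invariance of the postsingular set.
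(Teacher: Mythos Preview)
Your argument is correct and is essentially the standard one. The paper itself does not give a proof of this proposition; it merely cites \cite[Proposition 2.24]{our_approx}. So there is no in-paper argument to compare with, but what you wrote is precisely the expected direct verification from Proposition~\ref{prop: def of sigma map} and Definition~\ref{def: comb equiv}/Remark~\ref{rem: comb equiv}.

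One small inaccuracy in your closing commentary (not in the proof itself): the parabolic type hypothesis is \emph{not} what makes the isotopy lifting property (Proposition~\ref{prop: isotopy lifting property}) work. Rather, parabolic type is what guarantees that the Riemann surface produced by item~\eqref{it: pulling back} of Definition~\ref{def: top hol} is $\widehat{\C}$ rather than $\D$, so that $\psi$ in Proposition~\ref{prop: def of sigma map} is a homeomorphism $S^2 \to \widehat{\C}$ and $[\psi]$ lands in $\T_A$. The isotopy lifting property holds for any finite-type topologically holomorphic map. Also, your remark about ``replacing $\psi$ by $M \circ \varphi$ without losing holomorphicity'' does not quite describe what you did: you kept $\psi$ and instead introduced $\phi = \psi^{-1} \circ M \circ \varphi$, which is the cleaner route and avoids that issue entirely.
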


To illustrate the principle formulated in Proposition \ref{prop: fixed point of sigma}, we present the following remark.


\begin{remark}\label{rem: constant sigma map}
    Let $f \colon (S^2, A) \rto$ be a Thurston map with $|A| \geq 3$, and suppose that there is a subset $B \subset A$ such that $S_f \subset B$, $|B| = 3$, and $A \subset\overline{f^{-1}(B)}$. Then $f$ is realized rel.\ $A$ because $\sigma_f$ is a constant map according to Proposition \ref{prop: dependence}. Additionally, by applying Remark \ref{rem: functoriality} and Proposition \ref{prop: dependence}, it is easy to show that the Hurwitz class $\Hurw_{f, A}$ consists of a single Thurston map, up to combinatorial equivalence rel.\ $A$.

    However, Thurston maps that satisfy these conditions are somewhat artificial. For such a marked Thurston map $f \colon (S^2, A) \rto$, it must hold $P_f \subset B$ and for every $a \in A - B$, either $a$ is the essential singularity of $f$ or $f(a) \in B$. For instance, this scenario is impossible for unmarked Thurston maps with at least four postsingular values.
\end{remark}

\begin{proposition}\label{prop: propeties of pullback map}
    Let $f \colon (S^2, A) \rto$ be a Thurston map with $|A| \geq 3$. Then the pullback map $\sigma_f$ is holomorphic.
\end{proposition}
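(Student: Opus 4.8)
The plan is to show $\sigma_f$ is holomorphic by exhibiting it locally as a composition of holomorphic maps, using the factorization through Hurwitz-type data that was already set up implicitly in Proposition~\ref{prop: def of sigma map}. Concretely, fix a point $\tau_0 = [\varphi_0] \in \T_A$ and a representative holomorphic map $g_0 := \varphi_0 \circ f \circ \psi_0^{-1} \colon \widehat{\C} \dto \widehat{\C}$ as in Proposition~\ref{prop: def of sigma map}, so that $\sigma_f(\tau_0) = [\psi_0]$. The Teichm\"uller space $\T_A$ is a complex manifold and (for $|A|=4$) biholomorphic to $\D$; the key is to produce a holomorphic local inverse of $\pi$ near $\pi(\tau_0)$ and track how perturbing $\varphi$ on $A$ forces a holomorphic perturbation of the pair $(g_\varphi, \psi)$. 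I would first recall that the assignment sending a complex structure on $S^2 - A$ to the induced point of $\M_A$ is holomorphic, and that $\pi \colon \T_A \to \M_A$ is a holomorphic universal covering; hence it suffices to work on a small ball $U \ni \tau_0$ where $\pi|U$ is a biholomorphism onto its image.

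Next I would invoke the isotopy/uniqueness statements already in hand: by Proposition~\ref{prop: isotopy lifting property} and the uniqueness clause of Proposition~\ref{prop: def of sigma map}, the map $g_\varphi$ is determined up to pre- and post-composition with M\"obius maps by $\varphi$, and $\psi$ is then determined up to post-composition with a M\"obius map. The point of finite type enters here: because $f$ (equivalently $g_0$) has finitely many singular values, the space of holomorphic maps $\widehat{\C} \dto \widehat{\C}$ of the same ``combinatorial type'' as $g_0$ — i.e.\ the relevant Hurwitz space — is a complex manifold, and the map sending such a $g$ to the configuration of its postsingular-type marked points in $\M_A$ is holomorphic (this is exactly the $X$-map picture of \cite{critically_finite_endomorphisms}, here used only locally and only to get holomorphy). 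Pulling back a holomorphically varying complex structure $\mu_\tau$ representing $\pi(\tau)$ through $f$ and uniformizing gives a holomorphically varying complex structure on the source, whose class in $\T_A$ is $\sigma_f(\tau)$; the holomorphic dependence of the uniformizing map on the Beltrami coefficient (the measurable Riemann mapping theorem with holomorphic parameters, as in \cite[Chapter~4]{Hubbard_Book_1}) then yields that $\sigma_f$ is holomorphic on $U$. Since $\tau_0$ was arbitrary, $\sigma_f$ is holomorphic on all of $\T_A$.

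The main obstacle is making precise the claim that $\psi$ (hence $\sigma_f(\tau) = [\psi]$) depends holomorphically on $\tau$, as opposed to just continuously: one must choose the holomorphically-varying representatives consistently, i.e.\ normalize the M\"obius ambiguities in $g_\varphi$, $\varphi$, and $\psi$ simultaneously and check the normalization can be made holomorphic in $\tau$ (a standard but slightly fiddly three-point normalization argument). A secondary technical point is that in the transcendental case $g_\varphi$ has an essential singularity, so one cannot literally pull back a Beltrami form by a globally defined finite-degree covering; instead I would pull back on the punctured sphere $\widehat{\C} - \overline{g_\varphi^{-1}(\text{marked set})}$, where $g_\varphi$ restricts to an honest covering (as noted in Section~\ref{subsec: topologically holomorphic maps}), extend the resulting Beltrami coefficient by zero across the (isolated, measure-zero) puncture set, and uniformize — the parabolic-type hypothesis guarantees the uniformized source is again $\widehat{\C}$ with the correct puncture data. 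Alternatively, and more cheaply, one can bypass Beltrami forms entirely: cite \cite[Proposition 2.21]{our_approx} (already referenced) for the existence and uniqueness of $\sigma_f$, combine it with the holomorphic dependence results for finite-type maps from \cite{our_approx} or the analogous statement in \cite[Section~2]{critically_finite_endomorphisms}, and reduce to the assertion that a holomorphic family of holomorphic maps pulls back a holomorphic family of complex structures — which is exactly the content needed, and which I would state as the one nontrivial input.
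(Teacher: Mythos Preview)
Your proposal is essentially correct, and in fact it goes further than the paper does: the paper's own ``proof'' consists entirely of citations to \cite{Buff}, \cite{Hubbard_Book_2}, and \cite{Astorg}, asserting that the result is well-known in the finite-degree case and extends analogously to the transcendental setting. Your Beltrami-pullback argument (pull back a holomorphically varying Beltrami coefficient through $f$, extend by zero across the discrete/essential singular set, then invoke the measurable Riemann mapping theorem with holomorphic dependence on parameters) is precisely the content of those references, and your handling of the transcendental case via the parabolic-type hypothesis is the right idea.

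Two minor comments. First, your detour through the Hurwitz space and the $X$-map machinery of \cite{critically_finite_endomorphisms} is unnecessary for this statement; the direct Beltrami argument you describe afterward is cleaner and self-contained, and mixing the two approaches makes the sketch harder to follow than it needs to be. Second, the paper explicitly notes (in the remark immediately following the proposition) that it does not actually use this general holomorphicity result for its main arguments: for the specific families of Thurston maps treated in Section~\ref{sec: thurston theory}, holomorphicity of $\sigma_f$ is established directly and elementarily in item~\eqref{it: holomorphicicty of sigma} of Proposition~\ref{prop: 1-dim pullback maps}, bypassing the Beltrami machinery entirely. So while your sketch is a legitimate proof of the general statement, the paper's strategy is to defer the general case to the literature and prove only the special case it needs by a more hands-on route.
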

\begin{proof}
    This result is rather well-known in the context of finite degree Thurston maps (see \cite[Section 1.3]{Buff} and \cite[Sections 10.6 and 10.7]{Hubbard_Book_2}), and it can be extended analogously to the transcendental setting (see, for instance, \cite[Lemma 3.3]{Astorg}).
\end{proof}

\begin{remark}\label{rem: further properties}
    Proposition \ref{prop: propeties of pullback map} and \cite[Corollary 6.10.7]{Hubbard_Book_1} imply that the map $\sigma_f$ is \textit{$1$-Lipschitz}, meaning  $d_T(\sigma_f(\tau_1), \sigma_f(\tau_2)) \leq d_T(\tau_1, \tau_2)$ for every $\tau_1, \tau_2 \in \T_A$.
    In fact, in many cases, such as when the Thurston map $f \colon (S^2, A) \rto$ is transcendental, it can be shown (see \cite[Section 3.2]{HSS}, \cite[Chapter 5.1]{Pfrang_thesis}, or \cite[Sections 2.3 and 3.1]{Astorg}) that $\sigma_f$ is actually \textit{distance-decreasing}, i.e., $d_T(\sigma_f(\tau_1), \sigma_f(\tau_2)) < d_T(\tau_1, \tau_2)$ for every distinct $\tau_1, \tau_2 \in \T_A$. This property of pullback maps can be used to obtain certain rigidity results for transcendental postsingularly finite meromorphic maps (cf. \cite[Corollary 10.7.8]{Hubbard_Book_2} and \cite[Proposition 2.26]{our_approx}).

    However, we do not require these results and the observation of Proposition \ref{prop: propeties of pullback map} for our further arguments since we mostly work with families of Thurston maps satisfying additional assumptions. For these families, we will directly observe all the properties mentioned above.
\end{remark}

\section{Hyperbolic tools} \label{sec: hyperbolic tools}

Let $U$ be a hyperbolic Riemann surface, and let $d_U$ denote the distance function of the hyperbolic metric on $U$. For any rectifiable curve $\alpha$ in $U$, we denote the length of~$\alpha$ with respect to the hyperbolic metric by~$\ell_{U}(\alpha)$. When we refer to $\gamma$ as a geodesic in $U$, we always mean that $\gamma$ is a geodesic with respect to the hyperbolic metric. Also, let $B_U(z, r)$ be the hyperbolic ball in~$U$ with center $z \in U$ and radius $r$. If $U$ is a subset of $\C$, then the hyperbolic metric on $U$, as a conformal metric, can be written as $\rho_U(z)|dz|$, where $\rho_U\colon U \to [0, +\infty)$ is the \textit{density} of the hyperbolic metric on~$U$.

For a holomorphic map $g \colon U \to V$ between two hyperbolic Riemann surfaces, we denote by $\|\mathrm{D}g(z)\|_U^V$ the norm of the derivative of $g$ with respect to the hyperbolic metrics on the domain~$U$ and the range $V$. More precisely, this norm is given by $\|\mathrm{D}g(z)\|_U^V = \|\mathrm{D}_zg(v)\|_V / \|v\|_U$,
where $v \in T_zU$ is any non-zero vector, and $\|\cdot\|_U$ represents the length of a tangent vector to~$U$ with respect to the hyperbolic metric. If $U = V$, we simply use the notation $\|\mathrm{D}g(z)\|_U$. 

Schwarz-Pick's lemma \cite[Proposition~3.3.4]{Hubbard_Book_1} implies that for a holomorphic map $g \colon U \to V$ between two hyperbolic Riemann surfaces, we have $\|\mathrm{D}g(z)\|_U^V \leq 1$ for every $z \in U$. Furthermore, if $g$ is a covering map, this inequality becomes an equality; otherwise, $g$ is \textit{locally uniformly contracting}, i.e., for every compact set $K \subset U$, there exists a constant $\lambda_K < 1$ such that $\|\mathrm{D}g(z)\|_U^V \leq \lambda_K$ for all $z \in K$. Suppose that $\alpha$ is a $C^1$-curve in~$U$ and $\|\mathrm{D}g(z)\|_U^V \leq \lambda$ for all $z \in \alpha$. Then it is straightforward to check that $\ell_{V}(g(\alpha)) \leq \lambda \ell_U(\alpha)$. In particular, Schwarz-Pick's lemma implies that the map $g\colon U \to V$ is always 1-Lipschitz and, if $g$ is not a covering map, then $g$ is \textit{locally uniformly distance-decreasing}.

We will be particularly interested in the case when the map $g$ mentioned above is simply the inclusion map $I \colon U \hookrightarrow V$. We are going to denote $\|\mathrm{D}I(z)\|_U^V$ by $c_U^V(z)$. Clearly, if $U \subset V \subset \C$, then $c_U^V(z) = \rho_V(z)/\rho_U(z)$.


\subsection{Levy cycles} \label{subsec: levy cycles}

Let $\delta$ be an essential simple closed curve in the punctured Riemann sphere $X := \widehat{\C} - P$, where $3 \leq |P| < \infty$. According to \cite[Proposition 3.3.8]{Hubbard_Book_1}, there exists a unique closed geodesic in $X$ that is homotopic (in $X$) to $\delta$. Note that this geodesic should be necessarily simple \cite[Proposition 3.3.9]{Hubbard_Book_1}. 

In order to illustrate the utility of the hyperbolic tools, we prove that Levy cycles are obstructions for Thurston maps of both finite and infinite degree.

\begin{proof}[Proof of Proposition \ref{prop: levy cycles are obstructions}]
    Let $\gamma$ be a simple closed curve forming a Levy cycle for the Thurston map~$f \colon (S^2, A) \rto$. We assume that $n = 1$ in Definition \ref{def: levy cycle}, i.e., $\gamma$ is a Levy fixed curve. The general case can be handled in a similar manner. 

    Suppose that $f\colon (S^2, A) \rto$ is combinatorially equivalent to a postsingularly finite holomorphic map $g \colon (\widehat{\C}, P) \rto$. Based on Definition \ref{def: comb equiv} and Remark \ref{rem: comb equiv}, it is easy to see that $g \colon (\widehat{\C}, P) \rto$ also has a Levy fixed curve $\delta \subset \widehat{\C} - P$. Note that $|P| \geq 4$ since, otherwise, $\delta$ would not be essential in $\widehat{\C} - P$. According to the previous discussion and Corollary~\ref{corr: homotopy lifting for curves}, we can assume that $\delta$ is a simple closed geodesic in $\widehat{\C} - P$. 
    
    Let $\widetilde{\delta}$ be a connected component of $g^{-1}(\delta)$ such that $\delta$ and $\widetilde{\delta}$ are homotopic in $\widehat{\C} - P$, and $\deg(g|\widetilde{\delta}\colon \widetilde{\delta} \to \delta) = 1$. By Schwarz-Pick's lemma, it follows that
    $$
        \ell_{\widehat{\C} - P}(\delta) = \ell_{\widehat{\C} - \overline{g^{-1}(P)}}(\widetilde{\delta}) > \ell_{\widehat{\C} - P}(\widetilde{\delta}),
    $$
    where the last inequality is strict since $\overline{g^{-1}(P)} - P$ is non-empty because of Great Picard's theorem and the Riemann-Hurwitz formula \cite[Appendix A.3]{Hubbard_Book_1}.
    However, $\delta$ is the unique geodesic in its homotopy class in $\widehat{\C} - P$. Thus, $\ell_{\widehat{\C} - P}(\delta) \leq \ell_{\widehat{\C} - P}(\widetilde{\delta})$, and it leads to a contradiction. 
\end{proof}

Let $\gamma$ be an essential simple closed curve in $S^2 - A$, and $\tau = [\varphi]$ be a point in the Teichm\"uller space $\T_A$. We define $l_{\gamma}(\tau)$ as the length of the unique hyperbolic geodesic in $\widehat{\C} - \varphi(A)$ that is homotopic in $\widehat{\C} - \varphi(A)$ to $\varphi(\alpha)$. Additionally, we introduce $w_{\gamma}(\tau) := \log l_{\gamma}(\tau)$. It is known that $w_{\gamma} \colon \T_A \to  \R$ is a $1$-Lipschitz function \cite[Theorem~7.6.4]{Hubbard_Book_1}.

Let $X$ be a hyperbolic Riemann surface, and suppose $\alpha \subset X$ is a simple closed geodesic with $\ell_{X}(\alpha) < \ell^*$, where $\ell^* := \log(3 + 2 \sqrt{2})$. In this case, we say that $\alpha$ is \textit{short}. As stated in \cite[Proposition 3.3.8 and Corollary~3.8.7]{Hubbard_Book_1}, two short simple closed geodesics on a hyperbolic Riemann surface $X$ are either disjoint and non-homotopic in $X$, or they coincide. Therefore, \cite[Proposition 3.3.8]{Hubbard_Book_1} implies that a punctured Riemann sphere $\widehat{\C} - P$, where $3 \leq |P| < \infty$, can have at most $|P| - 3$ distinct short simple closed geodesics.

The following result allows us to identify a Levy fixed curve for a Thurston map based on the behavior of the corresponding pullback map.

\begin{proposition}\label{prop: finding levy cycles}
    Let $f \colon (S^2, A) \rto$ be a Thurston map with $|A| = 4$, and $\tau = [\varphi]$ and $\sigma_f(\tau) = [\psi]$ be points in the Teichm\"uller space $\T_A$, where the representatives $\varphi, \psi \colon S^2 \to \widehat{\C}$ are chosen so that the map $g := \varphi \circ f \circ \psi^{-1} \colon \widehat{\C} \dto \widehat{\C}$ is holomorphic. Suppose that there exists an annulus $U \subset \widehat{\C}$ such that:
    \begin{itemize}
        \item each connected component of $\widehat{\C} - U$ contains two points of $\psi(A)$;
        \item $\mod(U) > 5\pi e^{d_0}/\ell^*$, where $d_0 = d_T(\tau, \sigma_f(\tau))$;
        \item $g$ is defined and injective on $U$.
    \end{itemize}
    Then $f \colon (S^2, A) \rto$ has a Levy fixed curve. Moreover, if $g$ is defined and injective outside a single connected component of $\widehat{\C} - U$, then $f \colon (S^2, A) \rto$ has a weakly degenerate Levy fixed~curve.
\end{proposition}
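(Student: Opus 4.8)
The plan is to extract the Levy fixed curve directly from a large‑modulus sub‑annulus of $U$, push it forward by $g$, and control everything with hyperbolic lengths, closing the argument with the fact that a four‑punctured sphere carries at most one short simple closed geodesic.

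First I would fix the curve. Since $g|U$ is injective, $g^{-1}(\varphi(A))\cap U$ has at most $|\varphi(A)|=4$ points; writing $U$ in a conformal coordinate as a round annulus, their radii cut $U$ into at most $5$ concentric essential sub‑annuli whose moduli add up to $\mod(U)$, and I take $U'$ to be the one of largest modulus, so $\mod(U')\ge\mod(U)/5>\pi e^{d_0}/\ell^*$ and $U'$ avoids $g^{-1}(\varphi(A))$ (and the essential singularity of $g$, if present). Let $\eta$ be the core geodesic of $U'$ and set $\widetilde\gamma:=\psi^{-1}(\eta)$ and $\gamma:=f(\widetilde\gamma)$. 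Then $\widetilde\gamma$ is essential in $S^2-A$ because each component of $\widehat{\C}-U$ carries two points of $\psi(A)$ and $\eta$ separates these pairs; $\gamma\subset S^2-A$ because $\eta$ avoids $g^{-1}(\varphi(A))$; and $f|\widetilde\gamma\colon\widetilde\gamma\to\gamma$ is a homeomorphism with $\widetilde\gamma$ a connected component of $f^{-1}(\gamma)$, since $g|U$ is injective. It then remains to prove that $\gamma$ is essential in $S^2-A$ and that $\gamma\simeq\widetilde\gamma$ there; this makes $\gamma$ a Levy fixed curve.

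The heart of the proof is two length estimates, both of the shape $\le\pi/\mod(U')\le 5\pi/\mod(U)$. In the complex structure $\sigma_f(\tau)=[\psi]$, using the inclusions $U'\hookrightarrow U\hookrightarrow\widehat{\C}-\psi(A)$, Schwarz--Pick, and the standard relation between the modulus of an annulus and the length of its core geodesic ($\mod=\pi/\ell$), one gets $l_{\widetilde\gamma}(\sigma_f(\tau))\le\ell_{U'}(\eta)\le 5\pi/\mod(U)$; since $w_{\widetilde\gamma}$ is $1$-Lipschitz on $\T_A$, this gives $l_{\widetilde\gamma}(\tau)\le e^{d_0}\cdot 5\pi/\mod(U)<\ell^*$, so $[\widetilde\gamma]$ has a short geodesic representative in the structure $\tau$. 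For $\gamma$: since $|\varphi(A)|\ge 3$, the restriction $g\colon\widehat{\C}-\overline{g^{-1}(\varphi(A))}\to\widehat{\C}-\varphi(A)$ is a covering, hence a local isometry and $\pi_1$-injective; as $\eta\subset U'\subset\widehat{\C}-\overline{g^{-1}(\varphi(A))}$ is non‑peripheral there and $g|\eta$ is injective, $g(\eta)=\varphi(\gamma)$ is a non‑trivial simple closed curve in $\widehat{\C}-\varphi(A)$, and it is non‑peripheral as well (otherwise, being primitive, it would be freely homotopic to a simple loop around a single preimage of a point of $\varphi(A)$, which forces $\eta$ to be peripheral in $\widehat{\C}-\overline{g^{-1}(\varphi(A))}$ and contradicts that $\eta$ separates the two pairs of points of $\psi(A)$). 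Hence $\gamma$ is essential in $S^2-A$, and the local isometry property together with $U'\hookrightarrow\widehat{\C}-\overline{g^{-1}(\varphi(A))}$ yields $l_\gamma(\tau)\le\ell_{\widehat{\C}-\varphi(A)}(g(\eta))=\ell_{\widehat{\C}-\overline{g^{-1}(\varphi(A))}}(\eta)\le\ell_{U'}(\eta)\le 5\pi/\mod(U)<\ell^*$, so $[\gamma]$ also has a short geodesic representative in the structure $\tau$.

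Since $(S^2-A,\tau)$ is a four‑punctured Riemann sphere, it carries at most $4-3=1$ short simple closed geodesic, so the short geodesics of the essential classes $[\widetilde\gamma]$ and $[\gamma]$ coincide; hence $\gamma\simeq\widetilde\gamma$ in $S^2-A$, and $\gamma$ is a Levy fixed curve for $f\colon(S^2,A)\rto$. For the last assertion, if $g$ is injective on $U\cup D$ for one component $D$ of $\widehat{\C}-U$, then the component $W$ of $S^2-\widetilde\gamma$ with $\psi(W)\subset U\cup D$ makes the restriction of $\varphi^{-1}\circ g\circ\psi$ to $W$ injective, i.e. $f$ is injective on $W$, so $\gamma$ is weakly degenerate. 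I expect the main obstacle to be the length estimate for $\gamma$ in the case $g(U)\cap\varphi(A)\neq\emptyset$: the pigeonhole construction of $U'$ (and the attendant factor $5=|\varphi(A)|+1$) is exactly what keeps the estimate valid in that generality, and ruling out a peripheral $g(\eta)$ genuinely requires the covering‑space bookkeeping above rather than a bound on $\mod(g(U))$ alone, since peripheral annuli in punctured spheres can have arbitrarily large modulus. The remaining ingredients — essentialness of $\widetilde\gamma$, the homeomorphism property, and uniqueness of short geodesics — are routine given Section~3.
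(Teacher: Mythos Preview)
Your proof is correct and follows essentially the same route as the paper's: pass to a sub-annulus $U'$ of modulus at least $\mod(U)/5$ avoiding $g^{-1}(\varphi(A))$, take its core geodesic, use Schwarz--Pick through the chain of inclusions and the covering $g|Z\colon Z\to \widehat{\C}-\varphi(A)$ to bound both $l_{\widetilde\gamma}(\tau)$ (via the $1$-Lipschitz property of $w_{\widetilde\gamma}$) and $l_\gamma(\tau)$ below $\ell^*$, then invoke uniqueness of the short geodesic on a four-punctured sphere. Your treatment of the essentiality of $\gamma$ via $\pi_1$-injectivity of the covering and the peripheral case is a bit more explicit than the paper's one-line appeal to covering theory, but the content is the same.
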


\begin{proof}
    Since $g$ is defined and injective on $U$, the annulus $U$ contains at most 4 points of the set $\overline{g^{-1}(\varphi(A))}$. Therefore, we can find a parallel subannulus $V$ of $U$ such that $\mod(V) \geq \mod(U) / 5 > \pi e^{d_0} \ell^*$, and $V$ does not contain any points of $\overline{g^{-1}(\varphi(A))}$. Denote $X := \widehat{\C} - \psi(A)$, $Y :=\widehat{\C} - \varphi(A)$, and $Z := \widehat{\C} - \overline{g^{-1}(\varphi(A))}$. In particular, $g|Z\colon Z \to X$ is a holomorphic covering map.

    Let $\alpha$ be a unique hyperbolic geodesic of $V$. It is known that $\alpha$ is a simple closed curve that forms a core curve of the annulus $V$, and its length in $V$ is given by $\ell_V(\alpha) = \pi / \mod(V)$; see \cite[Proposition 3.3.7]{Hubbard_Book_1}. Let $\beta$ denote the curve $g(\alpha)$. Since $\alpha$ is a simple closed curve and $g|V$ is injective, then $\beta$ is also a simple closed curve. At the same time, $\beta$ must be essential in $Y$ since, otherwise, $\alpha$ would not be essential in $X$ (see, for instance, \cite[Theorems 5.10 and 5.11]{Forster}). Define $\widetilde{\alpha} := \psi^{-1}(\alpha)$ and $\widetilde{\beta} := \varphi^{-1}(\beta)$, both of which are essential simple closed curves in $S^2 - A$. It is straightforward to verify that $f(\widetilde{\alpha}) = \widetilde{\beta}$ and $\deg(f|\widetilde{\alpha}\colon \widetilde{\alpha} \to \widetilde{\beta}) = 1$.

    According to Schwarz-Pick's lemma and the choice of $\alpha \subset Z$, we have the following inequality:
    $$
        \ell_{X}(\alpha) \leq \ell_{Z}(\alpha) \leq \ell_V(\alpha) = \pi / \mod(V) < e^{-d_0}\ell^*.
    $$
    Therefore, since the function $w_{\widetilde{\alpha}} \colon \T_A \to \R$ is 1-Lipschitz, it follows that $l_{\widetilde{\alpha}}(\tau) < \ell^*$. Hence, there exists a simple closed geodesic $\delta$ in $Y$, homotopic in $Y$ to $\varphi(\widetilde{\alpha})$, such that $\ell_{Y}(\delta) <~\ell^*$. At the same time, $\ell_{Y}(\beta) = \ell_{Z}(\alpha) < e^{-d_0}\ell^*$. Therefore, since both $\beta$ and $\delta$ are essential in $Y$, they are homotopic in $Y$ to short simple closed geodesics.
    However, as discussed previously, the four-punctured Riemann sphere $Y$ can have only one short simple closed geodesic. This implies that $\beta$ and $\delta$ are homotopic in $Y$, which in turn means that the curves $\widetilde{\alpha}$ and $\widetilde{\beta}$ are homotopic in $S^2 - A$. Hence, $\widetilde{\beta}$ provides a Levy fixed curve for the Thurston map $f \colon (S^2, A) \rto$. It is also straightforward to verify that if $g$ is defined and injective outside a single connected component of $\widehat{\C} - U$, then this Levy fixed curve is weakly degenerate.
\end{proof}

The following result guarantees the uniqueness of a Levy fixed curve for a Thurston map with four marked points, given a specific technical condition.

\begin{proposition}\label{prop: uniqueness of levy cycles}
    Let $f\colon (S^2, A) \rto$ be a Thurston map with $|A| = 4$ that has a Levy fixed curve. Suppose that there exists a point $\tau$ in the Teichm\"uller space $\T_A$ such that the sequence $(\pi(\sigma_f^{\circ n}(\tau)))$ eventually leaves every compact subset of the moduli space $\M_A$. Then $f \colon (S^2, A) \rto$ has a unique Levy fixed curve up to homotopy in~$S^2 - A$.
\end{proposition}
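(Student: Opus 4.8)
The plan is to exhibit one essential simple closed curve in $S^2-A$ that is ``pinched'' along the orbit $(\sigma_f^{\circ n}(\tau))$ and then to show that every Levy fixed curve of $f$ must coincide with it up to homotopy. Two facts are needed: (i) the hyperbolic length of a Levy fixed curve does not grow under $\sigma_f$, so it stays bounded along the orbit; (ii) since $(\pi(\sigma_f^{\circ n}(\tau)))$ escapes every compact subset of $\M_A$, some curve has hyperbolic length tending to $0$ along the orbit, and by the Collar Lemma a curve of bounded length cannot intersect such a pinched curve, while in the four-punctured sphere disjoint essential curves are homotopic.

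\textbf{Step 1 (length does not grow along a Levy orbit).} First I would show that if $\gamma$ is a Levy fixed curve of $f\colon(S^2,A)\rto$ and $\tau\in\T_A$, then $l_\gamma(\sigma_f(\tau))\le l_\gamma(\tau)$. Write $\tau=[\varphi]$, $\sigma_f(\tau)=[\psi]$ with $g:=\varphi\circ f\circ\psi^{-1}\colon\widehat\C\dto\widehat\C$ holomorphic, and set $Y:=\widehat\C-\varphi(A)$, $Y':=\widehat\C-\psi(A)$, $Z:=\widehat\C-\overline{g^{-1}(\varphi(A))}$. Then $g|Z\colon Z\to Y$ is a holomorphic covering (hence a local isometry for the hyperbolic metrics), and $Z\subseteq Y'$ since $A\subseteq\overline{f^{-1}(A)}$ for $|A|\ge 3$ (at the essential singularity one invokes Great Picard's theorem as in Section~\ref{subsec: topologically holomorphic maps}). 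Let $\widetilde\gamma\subset f^{-1}(\gamma)$ be the degree-one preimage of $\gamma$ homotopic to $\gamma$ in $S^2-A$; since $f^{-1}(\gamma)\subset S^2-\overline{f^{-1}(A)}$, the loop $\psi(\widetilde\gamma)$ lies in $Z$ and maps homeomorphically onto $\varphi(\gamma)\subset Y$. Let $\delta\subset Y$ be the geodesic freely homotopic to $\varphi(\gamma)$, of length $l_\gamma(\tau)$. Lifting a free homotopy from $\varphi(\gamma)$ to $\delta$ through the covering $g|Z$, starting from the loop $\psi(\widetilde\gamma)$, yields a closed curve $\widetilde\delta\subset Z$ freely homotopic in $Z$ to $\psi(\widetilde\gamma)$ and mapping onto $\delta$; since $\psi(\widetilde\gamma)\to\varphi(\gamma)$ has degree one, $\widetilde\delta\to\delta$ has degree one, so $\widetilde\delta$ is a closed geodesic of $Z$ with $\ell_Z(\widetilde\delta)=\ell_Y(\delta)=l_\gamma(\tau)$. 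Finally, by Schwarz--Pick applied to the inclusion $Z\hookrightarrow Y'$, $\ell_{Y'}(\widetilde\delta)\le\ell_Z(\widetilde\delta)$, and $\widetilde\delta$ is freely homotopic in $Y'$ to $\psi(\widetilde\gamma)$, hence to $\psi(\gamma)$; therefore $l_\gamma(\sigma_f(\tau))\le\ell_{Y'}(\widetilde\delta)\le l_\gamma(\tau)$. Iterating, $l_\gamma(\sigma_f^{\circ n}(\tau))\le l_\gamma(\tau)=:C_\gamma$ for all $n\ge 0$.

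\textbf{Step 2 (pinching and the Collar Lemma).} Since $(\pi(\sigma_f^{\circ n}(\tau)))$ eventually leaves every compact subset of $\M_A\cong\Sigma$, a standard compactness argument (Mumford-type; concretely, some pair of the four marked points must collide in the limit) produces essential simple closed curves $\eta_n\subset S^2-A$ with $l_{\eta_n}(\sigma_f^{\circ n}(\tau))\to 0$; once these lengths fall below $\ell^*$, $\eta_n$ is the unique short geodesic of the corresponding four-punctured sphere. By the Collar Lemma, any closed geodesic that intersects $\eta_n$ has length at least $W\big(l_{\eta_n}(\sigma_f^{\circ n}(\tau))\big)$ for some function $W$ with $W(\ell)\to+\infty$ as $\ell\to 0^+$. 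Now let $\gamma$ and $\gamma'$ be Levy fixed curves of $f$ ($\gamma$ exists by hypothesis; $\gamma'$ is any other one). By Step~1, both $l_\gamma$ and $l_{\gamma'}$ stay bounded by $C:=\max(C_\gamma,C_{\gamma'})$ along the orbit. Choosing $n$ large enough that $W\big(l_{\eta_n}(\sigma_f^{\circ n}(\tau))\big)>C$, neither $\gamma$ nor $\gamma'$ can intersect $\eta_n$ essentially, so both have zero geometric intersection number with $\eta_n$. In the four-punctured sphere any two disjoint essential simple closed curves are homotopic, so $\gamma$ and $\gamma'$ are both homotopic to $\eta_n$ in $S^2-A$, hence to one another. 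Thus $f$ has a unique Levy fixed curve up to homotopy in $S^2-A$.

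\textbf{Expected main obstacle.} The delicate point is Step~1: lifting the geodesic $\delta$ through the possibly infinite-degree covering $g|Z$ to a \emph{closed} geodesic of exactly the same length in the free homotopy class of $\psi(\widetilde\gamma)$ — one must check that the free homotopy lifts to a homotopy of loops and that the lifted geodesic is traversed exactly once (which holds because $\psi(\widetilde\gamma)$ is embedded, hence represents a primitive class). The remaining inputs — Mumford-type compactness for $\M_A\cong\Sigma$, the Collar Lemma, and the fact that the four-punctured sphere has complexity one — are standard and can be quoted.
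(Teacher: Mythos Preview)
Your proof is correct and shares the same core idea as the paper's: the hyperbolic length of a Levy fixed curve does not increase under $\sigma_f$ (your Step~1 is essentially the Claim in the paper's proof), and Mumford-type compactness forces a pinched curve along the escaping orbit, so the Levy curve must be homotopic to the pinched one.

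The finishing argument differs. The paper introduces a \emph{second} orbit: it picks an auxiliary point $\mu$ with $l_\gamma(\mu)<\ell^*$, shows $l_\gamma(\mu_n)<\ell^*$ for all $n$, then uses the $1$-Lipschitz property of $w_{\varphi_n^{-1}(\delta_n)}$ and $d_T(\mu_n,\tau_n)\le d_T(\mu,\tau)$ to transfer the pinched curve $\delta_n$ from the $\tau$-orbit to the $\mu$-orbit, where it is also short; then the ``unique short geodesic on a four-punctured sphere'' fact finishes. You instead stay on the single $\tau$-orbit: the Levy curve has merely \emph{bounded} length there, and you invoke the Collar Lemma to conclude it cannot cross the pinched $\eta_n$ once $l_{\eta_n}$ is small enough. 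Your route is a bit more direct (no auxiliary basepoint, no transfer between orbits) at the cost of citing the Collar Lemma; the paper's route uses only the ``at most one short geodesic'' fact, which it has already set up earlier. Both are standard and either is fine here.

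One small comment on your ``expected obstacle'': you do not actually need $\widetilde\delta$ to be a closed geodesic of $Z$, only that $\ell_Z(\widetilde\delta)=\ell_Y(\delta)$, which follows immediately from $g|Z$ being a local isometry and the degree being one. So the primitivity remark is not really needed.
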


\begin{proof}
    Let $\gamma$ be a Levy fixed curve for the Thurston map $f \colon (S^2, A) \rto$. Choose a point $\mu \in \T_A$ such that $l_{\gamma}(\mu) < \ell^*$. Define $\tau_n := \sigma_f^{\circ n}(\tau) = [\varphi_n]$ and $\mu_n := \sigma_f^{\circ n}(\mu) = [\psi_n]$ for all $n \geq 0$. According to Proposition \ref{prop: def of sigma map}, we can assume that $g_n := \psi_n \circ f \circ \psi_{n + 1}^{-1} \colon \widehat{\C} \dto \widehat{\C}$ is holomorphic for every $n \geq 0$. We also define $X_n := \widehat{\C} - \varphi_n(A)$, $Y_n := \widehat{\C} - \psi_n(A)$, and $Z_n := \widehat{\C} - \overline{g_n^{-1}(\psi_{n}(A))} = \widehat{\C} - \overline{\psi_{n + 1}(f^{-1}(A))}$ for all $n \geq 0$. In particular, $g_n|Z_n \colon Z_n \to Y_n$ is a holomorphic covering map.

    \begin{claim}
        We have $l_{\gamma}(\mu_n) < \ell^*$ for all $n \geq 0$.
    \end{claim}

    \begin{subproof}
        Clearly, $l_{\gamma}(\mu_0) < \ell^*$ by the choice of $\mu$. We will prove that $l_{\gamma}(\mu_1) < \ell^*$, and the rest easily follows by induction on $n$. There exists a short simple closed geodesic $\beta$ in $Y_0$ such that $\beta$ and $\psi_0(\gamma)$ are homotopic in $Y_0$, and $\ell_{Y_0}(\beta) = l_{\gamma}(\mu_0)$. Define $\widetilde{\beta} := \psi^{-1}_0(\beta)$. Since $\widetilde{\beta}$ and $\gamma$ are homotopic in $S^2 - A$, then by Corollary \ref{corr: homotopy lifting for curves}, there exists a simple closed curve $\widetilde{\alpha} \subset f^{-1}(\widetilde{\alpha})$ that is homotopic to $\gamma$ in $S^2 - A$, since $\gamma$ is a Levy fixed curve for $f \colon (S^2, A) \rto$, and $\deg(f|\widetilde{\alpha} \colon \widetilde{\alpha} \to \widetilde{\beta}) = 1$. Now, define $\alpha := \psi_1(\widetilde{\alpha})$, which is homotopic in $Y_1$ to $\psi_1(\gamma)$. Clearly, $g(\alpha) = \beta$, $\deg(g|\alpha \colon \alpha \to \beta) = 1$, and $\alpha \subset Z_0$. Therefore, by Schwarz-Pick's lemma, we have
        $$
            \ell_{Y_1}(\alpha) \leq \ell_{Z_0}(\alpha) = \ell_{Y_0}(\beta) < \ell^*.
        $$
        Finally, $l_{\gamma}(\mu_1) = l_{\widetilde{\alpha}}(\mu_1) < \ell^*$.
    \end{subproof}
    

    Since $(\pi(\tau_n))$ eventually leaves every compact subset of $\M_A$, Mumford's compactness theorem \cite[Theorem~7.3.3]{Hubbard_Book_1} states that the length of the shortest simple closed geodesic~$\delta_n$ in $X_n$ tends to zero. Then $w_{\varphi_{n}^{-1}(\delta_{n})}(\mu_{n}) < \log \ell^*$ for any sufficiently large $n$, given that $w_{\varphi_n^{-1}(\delta)} \colon \T_A \to \R$ is 1-Lipschitz and $d_T(\mu_n, \tau_n) \leq d_T(\tau, \mu)$ by Proposition \ref{prop: propeties of pullback map} and Schwarz-Pick's lemma. Thus, $\psi_n(\gamma)$ and $\psi_n(\varphi_n^{-1}(\delta_n))$ are homotopic in $Y_n$ to short simple closed geodesics. Since $Y_n =  \widehat{\C} - \psi_n(A)$ is a four-punctured Riemann sphere, it follows that $\gamma$ and $\varphi_{n}^{-1}(\delta_{n})$ are homotopic in $S^2 - A$ for any $n$ large enough.

    This argument applies to any Levy fixed curve of $f \colon (S^2, A) \rto$. Therefore, it has to be unique up to homotopy in $S^2 - A$.
\end{proof}

\begin{remark}\label{rem: th obstruction}
 If an obstructed Thurston map $f\colon (S^2, A) \rto$ with $|A| \geq 3$ has a finite degree and \textit{hyperbolic orbifold}, then it is known (see \cite[Section~10.9 and Lemma 10.11.9]{Hubbard_Book_2} or \cite[Proof of Theorem~2.3,~p.~20]{Nikita}) that every point $\tau$ in the Teichm\"uller space $\T_A$~satisfies the condition of Proposition \ref{prop: uniqueness of levy cycles}.
    
\end{remark}

\begin{remark}
    Suppose that we are in the setting of Proposition \ref{prop: uniqueness of levy cycles}. It is clear that it suffices to require that the sequence $(\pi(\sigma_f^{\circ n}(\tau)))$ admits a subsequence eventually leaving every compact subset of the moduli space $\M_A$.
\end{remark}

\subsection{Estimating contraction} \label{subsec: estimating contraction}

The following proposition provides an estimate for the contraction of an inclusion between hyperbolic Riemann surfaces. Although this result is well-known \cite[Theorem 2.25]{McM_renorm}, we include the proof here for the sake of completeness.

\begin{proposition}\label{prop: mcmullens bound}
    Suppose that $U$ and $V$ are two hyperbolic Riemann surfaces such that $U \subsetneq V$. Let $z \in U$ and $s := d_V(z, V - U)$. Then
    $$
        c_U^V(z) \leq \frac{2r |\log r|}{1 - r^2},
    $$
    where $r := \tanh(s/2)$.
\end{proposition}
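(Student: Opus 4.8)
The plan is to lift the inclusion $U\hookrightarrow V$ to the universal cover of $V$ and thereby reduce the estimate to an explicit computation on the punctured disk. Note first that $s\in(0,\infty)$: since $U\subsetneq V$ the set $V-U$ is nonempty, and since $z$ lies in the open set $U$ some hyperbolic ball around $z$ in $V$ is contained in $U$. Fix a holomorphic universal covering $\pi\colon\D\to V$ with $\pi(0)=z$, and let $W$ be the connected component of $\pi^{-1}(U)$ containing $0$. Since $\pi$ is a covering map, so is its restriction $\pi|_W\colon W\to U$; hence $\pi$ pulls the hyperbolic metric of $V$ back to that of $\D$, while $\pi|_W$ pulls the hyperbolic metric of $U$ back to that of $W$. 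Reading everything in the local coordinate on $V$ near $z$ induced by $\pi$, the inclusion $U\hookrightarrow V$ becomes the inclusion $W\hookrightarrow\D$, so that $c_U^V(z)=c_W^{\D}(0)=\rho_{\D}(0)/\rho_W(0)$, and it suffices to bound this last quantity.

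Next I would locate the complement of $W$ inside $\D$. Because $\pi$ is a local isometry it is $1$-Lipschitz, so any $w\in\D$ with $d_{\D}(0,w)<s$ satisfies $d_V(z,\pi(w))<s=d_V(z,V-U)$ and hence $\pi(w)\in U$; as $B_{\D}(0,s)$ is connected and contains $0$, this gives $B_{\D}(0,s)\subset W$, and in particular every point of $\D-\pi^{-1}(U)$ lies at hyperbolic distance at least $s$ from $0$. Conversely, lifting to $\D$ paths from $z$ to points of $V-U$ of length close to $s$ produces points of $\pi^{-1}(V-U)=\D-\pi^{-1}(U)$ at distance arbitrarily close to $s$ from $0$; since closed hyperbolic balls in $\D$ are compact, $d_{\D}\bigl(0,\D-\pi^{-1}(U)\bigr)$ therefore equals $s$ and is attained at some point $p_0$. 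Then $p_0\notin W$, so $W\subset\D-\{p_0\}$, and monotonicity of the hyperbolic density under inclusions (Schwarz--Pick) yields $c_W^{\D}(0)\leq c_{\D-\{p_0\}}^{\D}(0)$.

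It remains to evaluate $c_{\D-\{p_0\}}^{\D}(0)$ when $d_{\D}(0,p_0)=s$. A M\"obius automorphism of $\D$ sending $p_0$ to $0$ carries $\D-\{p_0\}$ isometrically onto the punctured disk $\D^*$ and sends $0$ to a point of Euclidean modulus $r=\tanh(s/2)$, the modulus corresponding to hyperbolic distance $s$ from the origin. Using $\rho_{\D}(w)=2/(1-|w|^2)$ and $\rho_{\D^*}(w)=1/\bigl(|w|\log(1/|w|)\bigr)$, evaluation at such a point gives
$$
c_{\D-\{p_0\}}^{\D}(0)=\frac{\rho_{\D}}{\rho_{\D^*}}=\frac{2r\log(1/r)}{1-r^2}=\frac{2r\,|\log r|}{1-r^2},
$$
which is the asserted bound. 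The one genuinely delicate step is the equality $d_{\D}\bigl(0,\D-\pi^{-1}(U)\bigr)=s$ rather than merely ``$\geq s$'': the inequality ``$\geq s$'' is immediate from $B_{\D}(0,s)\subset W$, whereas the reverse needs the path-lifting argument above together with the fact that $\pi$ preserves lengths. Everything else reduces to the monotonicity of the hyperbolic metric under inclusions and the standard formula for the hyperbolic density of $\D^*$.
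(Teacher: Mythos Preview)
Your proof is correct and follows essentially the same approach as the paper: lift the inclusion to the universal cover of $V$ and compare with the punctured disk $\D^*$. The only cosmetic difference is the choice of basepoint---the paper centers the covering $p\colon\D\to V$ at a point $z_0\in V-U$ realizing the distance $s$ and then lifts $z$ to a point $w$ with $d_{\D}(0,w)=s$, so that the lifted $\widetilde U$ sits inside $\D^*$ directly; you instead center at $z$ and then locate a point $p_0\notin W$ at distance $s$, which forces the extra (harmless) M\"obius step to move $p_0$ to the origin. Your version is slightly more explicit about why the distance $s$ is actually attained upstairs, which the paper takes for granted.
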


\begin{proof}
    Suppose that $z_0 \in V - U$ is a point such that $s = d_V(z, z_0)$. Let $p \colon \D \to V$ be a holomorphic universal covering with $p(0) = z_0$. Let us chose $w \in p^{-1}(z)$ such that $d_{\D}(0, w) =~s$. Finally, we denote by $\widetilde{U} \subset \D$ the connected component of $w$ in $p^{-1}(U)$. Then we have the following commutative diagram:
   \begin{center}
       \begin{tikzcd}
           \D^* \arrow[dr, hook] & \widetilde{U} \arrow[r, "p|\widetilde{U}"] \arrow[d, hook] \arrow[l, hook] & U \arrow[d, hook]\\
            & \D \arrow[r, "p"] & V
       \end{tikzcd}
   \end{center}
    Applying Schwarz-Pick's lemma and recalling that $p \colon \D \to V$ and $p|\widetilde{U} \colon \widetilde{U} \to U$ are holomorphic covering maps, we obtain the following:
    $$
        c_U^V(z) = c_{\widetilde{U}}^{\D}(z) = \frac{\rho_{\D}(w)}{\rho_{\widetilde{U}}(w)} \leq \frac{\rho_{\D}(w)}{\rho_{\D^*}(w)}.
    $$
    The rest easily follows since $\rho_{\D}(w) = 2/(1 - |w|^2)$, $\rho_{\D^*}(w) = 1/|w\log|w||$ (\cite[Example 3.3.2]{Hubbard_Book_1}), and $s = d_{\D}(0, w) = 2 \tanh^{-1}(|w|)$ due to \cite[Exercise~2.1.8]{Hubbard_Book_1}.
\end{proof}

\begin{remark}\label{rem: mcmullens bound}
    Suppose that we are in the setting of Proposition \ref{prop: mcmullens bound}. 
    It shows that there exists an upper bound $\lambda(s) < 1$ for $c_U^V(z)$ depending only on $s = d_V(z, V - U)$ and not on $U$, $V$, or $z$. Moreover, $\lambda(s) \sim |s \log(s)|$ as $s \to 0$. In particular, $\lambda(s) \to 0$ as~$s \to 0$.
\end{remark}

Proposition \ref{prop: mcmullens bound} and Remark \ref{rem: mcmullens bound} allow us to estimate the contraction of certain inclusions between countably and finitely punctured Riemann spheres (see \cite[Lemma 2.1]{RempeActa} for the result of a similar nature).

\begin{proposition}\label{prop: inclusion of punctured spheres}
    Suppose that $(z_n)$ is a sequence in $\C$ such that $\lim_{n \to \infty} z_n = \infty$ and $\lim_{n \to \infty} \log|z_{n + 1}|/\log|z_n| = 1$. Let $P$ be a finite set consisting of at least two elements of~$(z_n)$.
    Then $c_U^V(z)$ converges to 0 as $|z|$ tends to $\infty$ for $z \in U$, where $U := \C - \{z_n: n \in \N\}$ and~$V := \C - P$.
\end{proposition}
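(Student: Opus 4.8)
The plan is to reduce the statement, via Proposition~\ref{prop: mcmullens bound} and Remark~\ref{rem: mcmullens bound}, to a purely metric claim about the target surface $V$. First one checks that we are in the setting of Proposition~\ref{prop: mcmullens bound}: the surface $V = \C - P$ omits at least three points of $\widehat{\C}$ (as $|P| \ge 2$), while $U = \C - \{z_n : n \in \N\}$ omits the infinite discrete set $\{z_n\}$ (discrete since $z_n \to \infty$), so both are hyperbolic; and $U \subsetneq V$ because $\{z_n\}$ is infinite and $P$ is finite, whence $V - U = \{z_n : n \in \N\} - P \ne \emptyset$. Remark~\ref{rem: mcmullens bound} then gives $c_U^V(z) \le \lambda\big(d_V(z, V - U)\big)$ with $\lambda(s) \to 0$ as $s \to 0$, so it suffices to prove that $d_V(z, V - U) \to 0$ as $|z| \to \infty$ with $z \in U$.

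Next I would exhibit, for each $z$ of large modulus, a puncture of $U$ that is hyperbolically close to $z$ in $V$. Since $|z_n| \to \infty$ and $P$ is bounded, only finitely many $z_n$ lie in $P$, so $z_n \in V - U$ for all large $n$. Write $a_n := \log|z_n|$; the hypotheses say $a_n \to +\infty$ and $a_{n+1}/a_n \to 1$. Given $z$ with $t := \log|z|$ large, let $n = n(z)$ be the largest index with $a_n \le t$ (well defined for $t$ large, as the $a_n$ are eventually large), so that $a_n \le t < a_{n+1}$. For any $\eps > 0$, once $n$ exceeds the threshold past which $a_{n+1}/a_n < 1 + \eps$, this gives $t < a_{n+1} < (1+\eps) a_n$, hence $t/(1+\eps) < a_n \le t$; a short argument shows $n(z)$ does exceed that threshold once $t$ is large, so $n(z) \to \infty$ and $a_{n(z)}/t \to 1$ as $|z| \to \infty$. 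In particular $z_{n(z)} \in V - U$ for $|z|$ large, and $\log|z_{n(z)}| \le \log|z|$ with $\log|z_{n(z)}|/\log|z| \to 1$.

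It remains to estimate $d_V\big(z, z_{n(z)}\big)$ from above. Fix $R$ with $P \subset \D_R$; then $\{|w| > R\} \subset V$, and for $|z|$ large both $z$ and $z_{n(z)}$ lie in this annular end, so by Schwarz-Pick's lemma $d_V\big(z, z_{n(z)}\big) \le d_{\{|w| > R\}}\big(z, z_{n(z)}\big)$. The map $w \mapsto R/w$ identifies $\{|w| > R\}$ with $\D^*$, whose hyperbolic density is $1/|w \log|w||$ (\cite[Example~3.3.2]{Hubbard_Book_1}); integrating this density along a radial arc followed by a circular arc yields
$$
d_{\D^*}\big(R/z,\, R/z_{n(z)}\big) \;\le\; \log\frac{\log(|z|/R)}{\log(|z_{n(z)}|/R)} \;+\; \frac{\pi}{\log(|z_{n(z)}|/R)}.
$$
Since $\log|z|, \log|z_{n(z)}| \to \infty$ with ratio tending to $1$, both terms on the right tend to $0$. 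Hence $d_V(z, V - U) \le d_V\big(z, z_{n(z)}\big) \to 0$, and the proposition follows from the reduction in the first paragraph.

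The step I expect to be the main obstacle is the middle one: because the moduli $|z_n|$ are not assumed monotone, one must argue carefully that the ``logarithmic gaps'' $a_{n+1} - a_n$ are negligible relative to $a_n$ and that the greedy choice of index $n(z)$ really does land on a puncture with $\log|z_{n(z)}| \sim \log|z|$. Once that is in place, the final $\D^*$-distance computation is routine.
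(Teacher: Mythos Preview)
Your proof is correct and follows essentially the same strategy as the paper: reduce via Proposition~\ref{prop: mcmullens bound} and Remark~\ref{rem: mcmullens bound} to showing $d_V(z, V-U)\to 0$, then estimate the hyperbolic distance to a nearby puncture by a radial-plus-angular path in the conformal model of the end near $\infty$. The only cosmetic differences are that the paper passes to the covering $\H \to \C - \overline{\D}$ (where your radial/circular bounds become the vertical/horizontal estimates in $\H$) and normalizes to $(|z_n|)$ non-decreasing, whereas you work directly in $\D^*\cong\{|w|>R\}$ and handle non-monotone moduli with the greedy index $n(z)$; the resulting inequalities are the same up to constants.
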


\begin{proof}
    Without loss of generality, we can assume that $P \subset \D$ and the sequence of absolute values $(|z_n|)$ is non-decreasing. Let $Q := \{z_n: n \in \N\}$, and let $p \colon \H \to \C - \overline{\D}$ be the holomorphic universal covering defined as $p(z) = \exp(-iz)$ for $z \in \H$. 
    
    According to Proposition \ref{prop: mcmullens bound} and Remark \ref{rem: mcmullens bound}, it suffices to demonstrate that for any $r > 0$, the set $\bigcup_{x \in Q - P} B_V(x, r)$ covers some punctured neighborhood of $\infty$ in $\widehat{\C}$. Alternatively, it is sufficient to show that for any $r > 0$, there exists $t > 0$ so that
    \begin{align}\label{align: inclusion}
        \bigcup_{x \in p^{-1}(Q - \overline{\D})}B_{\H}(x, r) \supset \H_{t} := \{z \in \C: \Im(z) > t\}.
    \end{align}
    Indeed, $p(\H_t) = \C - \overline{\D}_{e^{t}}$ and $p(B_{\H}(x, r)) \subset B_{V}(p(x), r)$ due to Schwarz-Pick's lemma.

    Note that for sufficiently large $n$, we have that $|z_n| > 1$ and 
    \begin{align}\label{align: preimage}
        \pi^{-1}(z_n) = \{2\pi k - \arg(z_n) + i\log(|z_n|): k \in \Z\} =: \{z_{n, k}: k \in \Z\}.
    \end{align}

    Observation (\ref{align: preimage}) implies that
    \begin{align}\label{align: diam}
        d_{n, k} := \mathrm{diam}_{\H}\left(\left\{z: 2\pi k \leq \Re(z) \leq 2\pi (k + 1), \Im(z_{n, k}) \leq \Im(z) \leq \Im(z_{n+1, k})\right\}\right) \leq\\
        \leq \log\left(\frac{\log|z_{n + 1}|}{\log|z_n|}\right) + \frac{2\pi}{\log|z_n|}.\nonumber
    \end{align}

    Estimate (\ref{align: diam}) is derived from the following well-known facts about the hyperbolic distance between two points $z, w \in \H$:
    \begin{itemize}
        \item if $\Re(z) = \Re(w)$, then $d_{\H}(z, w) = |\log(\Im(z)/\Im(w))|$, and
        \item if $\Im(z) = \Im(w)$, then $d_{\H}(z, w) \leq |\Re(z) - \Re(w)|/\Im(z)$. In fact, the upper estimate is the hyperbolic length of the horizontal segment connecting $z$ and $w$.
    \end{itemize}

     Finally, estimate (\ref{align: diam}) implies that $d_{n, k}$ tends to zero independently from $k$ when $n \to \infty$. Thus, inclusion (\ref{align: inclusion}) holds for certain $t > 0$, and the desired result follows.   
\end{proof}

\begin{remark}
    Suppose that we are in the setting of Proposition \ref{prop: inclusion of punctured spheres}. Assume that we know that $\log|z_{n+1}|/\log|z_n|$ is uniformly bounded from above but does not necessarily converge to $1$ as $n \to \infty$. Following the proof of Proposition \ref{prop: inclusion of punctured spheres}, one can show that there exists $\lambda < 1$ so that $c_U^V(z) < \lambda$ for any $z \in W \cap U$, where $W \subset \widehat{\C}$ is a neighborhood of $\infty$.
\end{remark}

As an application of Proposition \ref{prop: inclusion of punctured spheres}, we can obtain the following result.

\begin{proposition}\label{prop: coverings and inclusions}
    Suppose that $g \colon U \to V$ is a holomorphic covering map, where $U \subset V$ is a domain of $\widehat{\C}$ and $V = \widehat{\C} - P$ with $3 \leq |P| < \infty$. Let $x \in P$ be an accumulation point of the set $U$. Then exactly one of the following two possibilities is satisfied:
    \begin{enumerate}
        \item \label{it: removable singularity} $x$ is an isolated removable singularity of $g \colon U \to V$, or

        \item \label{it: contraction} $c_U^V(z)$ converges to $0$ as $|z - x|$ tends to 0 for $z \in U$.
    \end{enumerate}
\end{proposition}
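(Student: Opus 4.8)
The plan is to split according to whether or not $x$ is an isolated point of $\widehat{\C}\setminus U$, and to observe that the two cases correspond exactly to alternatives \eqref{it: removable singularity} and \eqref{it: contraction}. Suppose first that $x$ is isolated in $\widehat{\C}\setminus U$, so $D^*(x,\delta)\subseteq U$ for some $\delta>0$. Then $g|_{D^*(x,\delta)}$ is a holomorphic function into $V=\widehat{\C}\setminus P$, hence omits the $|P|\geq 3$ points of $P$; by Great Picard's theorem it has no essential singularity at $x$, so it extends to a holomorphic $\widehat{\C}$-valued map on $D(x,\delta)$, which is \eqref{it: removable singularity}. (The extended value $g(x)$ must lie in $P$: otherwise, after shrinking $\delta$, the component of $g^{-1}(\,\cdot\,)$ corresponding to the preimage of a genuine disk about $g(x)\in V$ would be a punctured disk covering a disk, which is impossible.) Moreover in this case $U$ and $V$ both have an isolated puncture at $x$, so $\rho_U$ and $\rho_V$ are both asymptotic to $\bigl(|z-x|\log\tfrac{1}{|z-x|}\bigr)^{-1}$ as $z\to x$, whence $c_U^V(z)=\rho_V(z)/\rho_U(z)\to 1$; in particular $c_U^V(z)\not\to 0$, so \eqref{it: removable singularity} and \eqref{it: contraction} are mutually exclusive.

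Now suppose $x$ is not isolated in $\widehat{\C}\setminus U$; we must prove \eqref{it: contraction}. Since the hypotheses and the quantity $c_U^V$ are all invariant under M\"obius coordinate changes, we may assume $x=\infty$, so $V=\C\setminus P_0$ with $P_0:=P\setminus\{\infty\}$, $|P_0|=|P|-1\geq 2$, and $\C\setminus U$ unbounded. It suffices to build a sequence $(z_n)$ in $\C\setminus U$ with $z_n\to\infty$, $\log|z_{n+1}|/\log|z_n|\to 1$, and $P_0\subseteq\{z_n:n\in\N\}$ (listing the finitely many points of $P_0\subseteq\C\setminus U$ first does not affect the ratio condition): indeed, with $U':=\C\setminus\{z_n:n\in\N\}$ one has $U\subseteq U'\subseteq V$, so $c_U^V(z)=\rho_V(z)/\rho_U(z)\leq \rho_V(z)/\rho_{U'}(z)=c_{U'}^V(z)$ for $z\in U$, while Proposition~\ref{prop: inclusion of punctured spheres} gives $c_{U'}^V(z)\to 0$ as $|z|\to\infty$. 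Such a sequence exists precisely when $\C\setminus U$ is \emph{logarithmically dense at $\infty$}, i.e.\ for every $\eta>0$ there is $R_0$ with $(\C\setminus U)\cap\{R\leq|z|\leq R^{1+\eta}\}\neq\emptyset$ for all $R\geq R_0$; equivalently, in a coordinate with $x$ finite, $U$ contains no round annulus $\{r^{1+\eta}<|z-x|<r\}$ once $r$ is small enough.

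To establish this, suppose toward a contradiction that for a fixed $\eta_0>0$ there are round annuli $A_n=\{r_n^{1+\eta_0}<|z-x|<r_n\}\subseteq U$ with $r_n\to 0$, so $\operatorname{mod}(A_n)=\tfrac{\eta_0}{2\pi}\log\tfrac{1}{r_n}\to\infty$. The core geodesic $\gamma_n$ of $A_n$ is a round circle about $x$, and it is non-contractible in $U$ because one of its complementary disks contains $x\notin U$ while the other contains $P\setminus\{x\}$ (for $r_n$ small). As $g\colon U\to V$ is a holomorphic covering it is $1$-Lipschitz for the hyperbolic metrics, so $\beta_n:=g(\gamma_n)$ satisfies $\ell_V(\beta_n)\leq\ell_{A_n}(\gamma_n)=\pi/\operatorname{mod}(A_n)\to 0$, and $[\beta_n]=g_*[\gamma_n]\neq 1$ in $\pi_1(V)$ since $g_*$ is injective. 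Hence for large $n$, $\beta_n$ lies in the thin part of the \emph{fixed} surface $V$: either in a cusp neighbourhood of a puncture, or in a collar of one of the at most $|P|-3$ short simple closed geodesics of $V$. The collar alternative is impossible, since it would force $\ell_V(\beta_n)$ to be at least the fixed positive length of a short geodesic of $V$, contradicting $\ell_V(\beta_n)\to 0$. So $\beta_n$ lies in a cusp neighbourhood $N_p$ of some $p\in P$ (the same $p$ along a subsequence), whence $\gamma_n$ lies in a single component $C_n$ of $g^{-1}(N_p)$; being non-contractible in $U$ it is non-contractible in $C_n$, so $C_n$ is not simply connected, $g|_{C_n}\colon C_n\to N_p$ has finite degree, and $C_n$ is a punctured disk whose puncture $q_n$ lies in $\widehat{\C}\setminus U$ (else $g(q_n)=p\notin V$).

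The hard part — and the heart of the proof — is to finish this cusp case: one must exploit the covering structure near the cusp $p$ (the components of $g^{-1}(N_p)$ are punctured disks of finite degree, sitting disjointly in $\widehat{\C}$ with their punctures in $\widehat{\C}\setminus U$, their moduli multiplied by the local degree of $g|_{C_n}$, and forced collectively to accumulate at $x$) to conclude that these punctures, hence $\widehat{\C}\setminus U$, are logarithmically — not merely topologically — dense at $x$, contradicting the existence of the fat annuli $A_n$. This is the quantitative analogue of what happens in the model case $V=\C\setminus\{0,1\}$, $U=\C\setminus\tfrac12\Z$, $g=\sin^2(\pi z)$, where $\widehat{\C}\setminus U=\tfrac12\Z\cup\{\infty\}$ is log-dense at $\infty$ because $g$ is periodic over the cusp at $\infty$; extracting this behaviour from a general covering $g$ is the main obstacle.
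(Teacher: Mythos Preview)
Your proof is incomplete, and you acknowledge this yourself in the final paragraph: you set up a promising hyperbolic-geometric approach but stop short of the crucial step, namely deducing logarithmic density of $\widehat{\C}\setminus U$ at $x$ from the structure of $g^{-1}(N_p)$ in the cusp case. Knowing that the $C_n$ are punctured disks with punctures outside $U$ does not by itself prevent the fat annuli $A_n$ from existing --- a priori these punctures could all lie far from $x$, or the components $C_n$ could be very thin and nested in complicated ways inside the annuli. Making this quantitative is exactly the obstacle you identify, and without it the argument does not close.

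The paper bypasses this difficulty entirely by a normal-families argument rather than the thin--thick decomposition. Assuming toward a contradiction that round annuli $A(r_n/a_n, r_n a_n) \subset U$ exist with $a_n \to \infty$ and $r_n \to \infty$ (with $x = \infty$), one rescales and considers $g_n(z) := g(r_n z)$ on $A(1/a_n, a_n)$. Since each $g_n$ omits the set $P$, Montel's theorem yields a subsequential limit $h \colon \C^* \to \widehat{\C}$, which Hurwitz and Great Picard force to be constant, say equal to $w$. Thus for large $n$, $g$ maps a slightly smaller annulus $A(2r_n/a_n, a_n r_n/2)$ into any prescribed simply connected neighbourhood $W$ of $w$ with $\overline{W} \cap P = \{w\} \cap P$. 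But each component of $g^{-1}(W)$ is a disk or a once-punctured disk (since $g$ is a covering over $V$ and $W \cap V$ is simply connected or once-punctured), so containing a fat round annulus forces that component to swallow either all of $\D_{a_n r_n/2}$ or all of $\widehat{\C} \setminus \overline{\D}_{2r_n/a_n}$ except possibly one point; the first fails for infinitely many $n$, and the second contradicts the assumption that $x = \infty$ is not an isolated removable singularity. This Montel/Hurwitz route sidesteps precisely the bookkeeping over the cusp that blocks your approach.

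Your treatment of the isolated case, and of the mutual exclusivity of \eqref{it: removable singularity} and \eqref{it: contraction} via the asymptotic $\rho_U \sim \rho_V$ at a common puncture, is correct and in fact more explicit than the paper, which only argues the implication ``not \eqref{it: removable singularity} $\Rightarrow$ \eqref{it: contraction}'' directly.
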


\begin{proof}
    Without loss of generality, we can assume that $x = \infty$. Suppose that condition (\ref{it: removable singularity}) is not satisfied. We will prove that there exists a constant $a > 1$ such that for every sufficiently large $r > 0$, the annulus $A(r/a, ra) := \{z \in \C: r/a < |z| < ra\}$ contains a point of $\C - U$ (see \cite[Lemma 3.2]{LandingTheorem} for a similar result in the setting of entire maps). Suppose the contrary, i.e., that there exist sequences $(a_n)$ and $(r_n)$ such that $a_n > 1$, $a_n \to \infty$, $r_n \to \infty$, and $A_n := A(r_n/a_n, r_na_n) \subset U$ for every $n \in \N$.

    In particular, $g$ is defined and meromorphic on $A_n$, allowing us to consider the sequence of meromorphic functions $g_n \colon A(1/a_n, a_n) \to \widehat{\C}$, $n \in \N$, where $g_n(z) = g(r_nz)$ for every $z \in A(1/a_n, a_n)$. By Montel's theorem, the family $(g_n)$ is normal since each $g_n$ omits the values in the set $P$. Thus, there exists a subsequence of $(g_n)$ that converges locally uniformly on the punctured plane $\C^*$. Without loss of generality, we assume that this subsequence coincides with the original sequence $(g_n)$. Let $h \colon \C^* \to \widehat{\C}$ be the limiting function. If $h$ is not constant, then by Hurwitz's theorem \cite[p.~231]{Gamelin}, $h$ also omits the values in the set $P$. This leads to a contradiction because either $h$ would have an isolated essential singularity at $0$ or~$\infty$, contradicting Great Picard's theorem, or both $0$ and $\infty$ would be removable singularities or poles of $h$, and thus, $h$ is a rational map, which cannot omit any values.

    Therefore, $h$ is a constant map. Denote by $w \in \widehat{\C}$ its unique value. Choose a simply connected domain $W \ni w$ such that $\overline{W} \cap P = \{w\} \cap P$. From the previous discussion, $g(A(2r_n/a_n, a_n r_n/2)) \subset W$ for all $n$ large enough. Note that every connected component of $g^{-1}(W) \subset U$ is either a simply connected domain or a simply connected domain with a single puncture (see, for instance, \cite[Theorems 5.10 and 5.11]{Forster}). Hence, for every sufficiently large $n$, either all but at most one of the points in $\D_{a_n r_n / 2}$ belong to $g^{-1}(W)$, or the same is true for the set $\widehat{\C} - \overline{\D}_{2r_n/a_n}$. The first case cannot happen for infinitely many $n$, so there must be some $n \in \N$ for which the second case occurs. It immediately leads to a contradiction, as we initially assumed that $x$ is not an isolated removable singularity of the map $g \colon U \to V$.

    Now, we can take $z_n$ to be a point of $\widehat{\C} - U$ in $A(a^{2n}, a^{2n + 2})$ for every sufficiently large $n$. It is easy to see that the sequence $(z_n)$ satisfies the conditions of Proposition \ref{prop: inclusion of punctured spheres}.
    Given that $$c_U^V(z) = \frac{\rho_V(z)}{\rho_U(z)}\leq \frac{\rho_V(z)}{\rho_{\C - Q}(z)},$$ where $Q := P \cup \{z_n : n \in \N\}$, item (\ref{it: contraction}) follows from Proposition \ref{prop: inclusion of punctured spheres}.
\end{proof}

\subsection{Iteration on the unit disk} \label{subsec: iteration on the unit disk}

If $h\colon \D \to \D$ is a non-injective holomorphic map, the Denjoy-Wolff theorem \cite[Theorem 3.2.1]{Abate} states that any point $z \in \D$ converges under iteration of $h$ to a point $z_0 \in \overline{\D}$ that is independent of the initial choice of $z$. In this section, we explore holomorphic maps on the unit disk that satisfy stronger assumptions, allowing us to achieve more precise control on the behavior of their orbits. Many examples of such maps will appear in Section~\ref{sec: thurston theory}. Specifically, many pullback maps of (marked) Thurston maps with a marked set~$A$, where $|A| = 4$, satisfy these conditions.

We say that $z \in \widehat{\C}$ is a \textit{regular point} of a holomorphic map $g \colon U \to V$, where $U$ and $V$ are domains of $\widehat{\C}$, if either $z \in U$ and $\deg(g, z) = 1$, or $z$ is an isolated removable singularity of~$g$ and, after extending $g$ holomorphically to a neighborhood of $z$, $\deg(g, z) = 1$. A point $z \in \widehat{\C}$ is a \textit{fixed point} of the map $g \colon U \to V$ if either $z \in U$ and $g(z) = z$, or $z$ is an isolated removable singularity of $g$ and, after extending $g$ holomorphically to a neighborhood of $z$, we have $g(z) = z$. The concepts of repelling or attracting fixed points can be generalized in a similar way.

\begin{theorem}\label{thm: iteration on unit disk}
    Let $h \colon \D \to \D$ be a holomorphic map, and $\pi \colon \D \to V$ and $g \colon U \to V$ are holomorphic covering maps, where $U \subset V$ is a domain of $\widehat{\C}$ and $V = \widehat{\C} - P$ with $3 \leq |P| < \infty$. Suppose that $\pi(h(\D)) \subset U$ and $\pi = g \circ \pi \circ h$, i.e., the following diagram commutes:
    \begin{center}  
        \begin{tikzcd}
            \D \arrow[r, "h"] \arrow[d, "\pi"] & h(\D) \arrow [d, "\pi"]\\
            V & U \arrow[l, "g"]
        \end{tikzcd}
    \end{center}
    If the map $g \colon U \to V$ is non-injective, then exactly one of the following two possibilities is~satisfied:
    \begin{enumerate}
        \item for every $z \in \D$, the $h$-orbit of $z$ converges to the unique fixed point of $h$, or

        \item the sequence $(\pi(h^{\circ n}(z)))$ converges to the same repelling fixed point $x \in P$ of the map $g$, regardless of the choice of $z \in \D$.
    \end{enumerate}
\end{theorem}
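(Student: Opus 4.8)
The plan is to translate the whole question into the dynamics of the single self-map $h$ of $\D$, whose contraction rate is pinned down by the inclusion $U\hookrightarrow V$, and then to run a Denjoy--Wolff/recurrence argument in the style of the proof of Thurston's characterization theorem.

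\emph{Step 1 (a metric identity and the easy case).} First I would use that $\pi\colon\D\to V$ and $g\colon U\to V$ are holomorphic coverings, so their hyperbolic derivative norms are identically $1$. Since $\pi(h(\D))\subset U$, the relation $\pi=g\circ(\pi\circ h)$ and the chain rule give $\|\mathrm{D}(\pi\circ h)(z)\|_\D^U=1$, i.e. $\pi\circ h\colon\D\to U$ is itself a holomorphic covering; dividing once more yields
$$\|\mathrm{D}h(z)\|_\D=c_U^V\big(\pi(h(z))\big)\qquad(z\in\D).$$
As $g$ is non-injective, $U\subsetneq V$ (if $U=V$ then, by multiplicativity of the Euler characteristic under coverings and $\chi(V)<0$, $g$ would have degree $1$), so the inclusion $U\hookrightarrow V$ is not a covering and $c_U^V<1$ on $U$, with a maximum $<1$ on each compact subset of $U$. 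Since $\pi(h(\overline{B_\D(0,R)}))$ is always a compact subset of $U$, it follows that $h$ is locally uniformly distance-decreasing, in particular not an elliptic automorphism; by the Denjoy--Wolff theorem there is $z_0\in\overline\D$ with $h^{\circ n}(z)\to z_0$ for all $z$, and $z_0\in\D$ precisely when $h$ has a fixed point in $\D$. The two alternatives ``$z_0\in\D$'' and ``$z_0\in\partial\D$'' are mutually exclusive; in the first case $z_0$ is the \emph{unique} fixed point of $h$ (two fixed points would force $h=\id_\D$, an automorphism), which is conclusion (1), and then (2) fails because $\pi(h^{\circ n}(z))\to\pi(z_0)\in V$. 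So it remains to treat $z_0\in\partial\D$ and derive (2).

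\emph{Step 2 (the recurrence lemma — the main obstacle).} Assume $z_0\in\partial\D$. The key point, analogous to the recurrence step in the proof of Thurston's characterization (cf. \cite[Section 10.9]{Hubbard_Book_2}), is: \emph{if $(\pi(h^{\circ n}(z)))$ met some compact $K\subset V$ for infinitely many $n$, then $h$ would have a fixed point in $\D$} — contradicting $z_0\in\partial\D$. To prove this I would enlarge $K$ to a compact $K'\supset\overline{B_V(K,2\rho_0)}$, where $\rho_0=d_\D(z,h(z))$, and set $\lambda:=\sup_{K'\cap U}c_U^V<1$ (this holds since $c_U^V<1$ on $U$ and $c_U^V\to0$ as one approaches $\partial U$ inside $U$). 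Using that $\pi\circ h$ is $1$-Lipschitz onto $U\subset V$, one checks that along the hyperbolic geodesic segment $[z_{n_k},z_{n_k+1}]$ the point $\pi(h(\cdot))$ lies in $K'\cap U$, so $\|\mathrm{D}h\|_\D\le\lambda$ there and hence $d_\D(z_{n_k+1},z_{n_k+2})\le\lambda\,d_\D(z_{n_k},z_{n_k+1})$; since $d_\D(z_n,z_{n+1})$ is non-increasing this forces $d_\D(z_n,z_{n+1})\to0$. Then a normal-families argument (lift $K$ to a compact subset of $\D$, pull the orbit points $z_{n_k}$ back by deck transformations of $\pi$ into that compact set, and extract a locally uniform limit $\Psi$ of the intermediate iterates $\gamma_{k+1}^{-1}\circ h^{\circ(n_{k+1}-n_k)}\circ\gamma_k$) produces a holomorphic self-map of $\D$ with an attracting interior fixed point of multiplier $\le\lambda<1$, from which one deduces that $h$ itself has a fixed point in $\D$. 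I expect this lemma to be the technical heart of the argument.

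\emph{Step 3 (convergence to a single puncture, and extending $g$).} By Step 2, for every compact $K\subset V$ the orbit $(w_n):=(\pi(h^{\circ n}(z)))$ lies in $K$ for only finitely many $n$; since $P$ is finite, this forces $w_n\to x$ for a single $x\in P$ (if $w_n$ accumulated at two distinct punctures it would cross a fixed compact separating their cusp neighbourhoods infinitely often). As $w_{n+1}\in U$ accumulates at $x$, Proposition \ref{prop: coverings and inclusions} applies: if $c_U^V\to0$ near $x$, then $\|\mathrm{D}h(z_n)\|_\D=c_U^V(w_{n+1})\to0$, and since hyperbolic balls of fixed radius near a cusp of $V$ are Euclidean-small this makes $\sum_n d_\D(z_n,z_{n+1})<\infty$, so $(z_n)$ is $d_\D$-Cauchy and converges inside $\D$ — contradicting $z_0\in\partial\D$. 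Hence $x$ is a removable singularity of $g$; passing to the limit in $w_n=g(w_{n+1})$ gives $g(x)=x$, and moreover $g'(x)\ne0$ (a critical, hence super-attracting, fixed point would force $|w_n-x|$ to be eventually increasing, impossible as $w_n\to x$). Thus $g$ extends holomorphically past $x$ with $g(x)=x$, $g'(x)\ne0$.

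\emph{Step 4 ($x$ is repelling).} It remains to rule out $|g'(x)|\le1$. Passing to the limit in $\dfrac{w_n-x}{w_{n+1}-x}=\dfrac{g(w_{n+1})-g(x)}{w_{n+1}-x}\to g'(x)$ shows $|g'(x)|\ge1$ (as above, $|g'(x)|<1$ would make $|w_n-x|$ eventually increasing). For the borderline case, I would normalize so that $\D$ is replaced by the upper half-plane $\H$ with $z_0=\infty$ and the cusp stabilizer generated by $w\mapsto w+1$ (noting $z_0$ is a parabolic fixed point of $\pi_1(V)$ since $w_n\to x$); the relation $\pi=g\circ\pi\circ h$ then forces $h(w+1)=h(w)+s$ for a positive integer $s$, injectivity of the cusp coordinate of $U$ forces $s=1$, so $h(w)=w+\phi(w)$ with $\phi$ holomorphic of period $1$, and a short computation with the cusp uniformizations gives $|g'(x)|=e^{2\pi\Im c_0}$, where $c_0=\lim_{\Im w\to+\infty}\phi(w)$. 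Finally, the condition $h(\H)\subset\H$ bounds $\Im\phi$ from below near $\partial\H$ by a quantity tending to $0$, so by the minimum principle the harmonic function $\Im\phi$ is $\ge0$ with an interior zero at the cusp unless it is identically $0$; the latter makes $h$ a translation, an automorphism, which is excluded. Hence $\Im c_0>0$, $|g'(x)|>1$, and $x$ is a repelling fixed point of $g$, giving conclusion (2).
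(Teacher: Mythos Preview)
Your Steps~1--3 follow the same arc as the paper's proof: the metric identity $\|\mathrm{D}h(z)\|_\D=c_U^V(\pi(h(z)))$, the recurrence/contraction dichotomy, and the use of Proposition~\ref{prop: coverings and inclusions} to force $x\in P$ to be a removable singularity and a fixed point of $g$. Two genuine gaps remain.

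\textbf{The end of Step~2.} After you correctly deduce $d_\D(z_n,z_{n+1})\to0$, your normal-families argument does not conclude. From a limit $\Psi$ of $\gamma_{k+1}^{-1}\circ h^{\circ(n_{k+1}-n_k)}\circ\gamma_k$ having an attracting fixed point you cannot infer that $h$ has one: the conjugating deck transformations $\gamma_k$ vary with $k$, and there is no reason $\Psi$ is in any sense an iterate of $h$. The paper closes this step differently and elementarily: once $\ell_\D(\delta_m)<r(1-\lambda)/2$ and $d_V(x_m,x)<r$ for some accumulation point $x\in K$, a short induction shows $\pi(\delta_n)\subset B_V(x,r)\subset K$ for all $n\ge m$, hence $\ell_\D(\delta_{n+1})\le\lambda\,\ell_\D(\delta_n)$ for all such $n$, and $(z_n)$ is Cauchy in $\D$.

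\textbf{Step~4.} This is the more serious problem. Your plan hinges on the claim that the Denjoy--Wolff point $z_0\in\partial\D$ is a \emph{parabolic} fixed point of the Fuchsian group $\pi_1(V)$. Knowing only $\pi(z_n)\to x$ and $z_n\to z_0$ does not give this: parabolic points form a countable subset of $\partial\D$, while $z_0$ could a priori be any boundary point. Without that, the whole cusp normalization (the relation $h(w+1)=h(w)+s$, the formula $|g'(x)|=e^{2\pi\Im c_0}$, and the ``interior zero at the cusp'' argument, which in any case confuses the boundary point $\infty$ with an interior point) collapses. The paper's route avoids this entirely: it concatenates the arcs $\delta_n$ into a curve $\pi\circ\delta\colon[0,\infty)\to V$ landing at $x$ with $\pi(\delta(t))=g(\pi(\delta(t+1)))$, and applies the \emph{Snail Lemma} \cite[Lemma~13.2]{Milnor} to conclude that either $|g'(x)|>1$ or $g'(x)=1$; it then excludes $g'(x)=1$ by a normal-families argument on the local inverse branches $\varphi_n=(g^{\circ n})^{-1}$ near $x$ (their limit would be injective with $\varphi'(0)=1$, contradicting that the $k$-th Taylor coefficient of $g^{\circ n}$ blows up). This is the idea you are missing.
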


    \begin{proof}
    Given that the maps $\pi \colon \D \to V$ and $g \circ \pi|\widetilde{U}\colon \widetilde{U} \to V$ are covering maps, where $\widetilde{U} := \pi^{-1}(U)$, it follows \cite[Section 1.3, Exercise 16]{hatcher} that $h\colon \D \to \widetilde{U}$ is also a covering map and, in particular, $\widetilde{U}$ is connected. Therefore, according to Schwarz-Pick's lemma, we have $\|\mathrm{D}h(z)\|_{\D} = c_{\widetilde{U}}^{\D}(h(z)) = c_U^V(\pi(h(z)))$ for every $z \in \D$. At the same time, Great Picard's theorem and the Riemann-Hurwitz formula \cite[Appendix~A.3]{Hubbard_Book_1} imply that $V - U$ contains at least one point. Hence, the inclusion $I \colon U \hookrightarrow V$ is locally uniformly contracting with respect to the hyperbolic metrics on $U$ and~$V$. As a result, $h$ is locally uniformly contracting with respect to the hyperbolic metric on~$\D$. In particular, $h$ has at most one fixed point, and if there exists a point $z \in \D$ such that its orbit $(h^{\circ n}(z))$ converges in $\D$, then every orbit of $h$ converges to the unique fixed point of $h$.

    Let us pick an arbitrary point $z_0 \in \D$. Define $z_n = h^{\circ n}(z_0)$ and $x_n = \pi(h^{\circ n}(z_0))$ for $n \geq 0$. Connect the points $z_0$ and $z_1$ by the hyperbolic geodesic~$\delta_0 \subset \D$. We denote by $\delta_n \subset \D$ the curve $h^{\circ n}(\delta)$ that connects $z_n$ and $z_{n + 1}$. By Schwarz-Pick's lemma, the sequence $(\ell_{\D}(\delta_n))$ is non-increasing. In particular, if $d_0 = d_{\D}(z_0, z_1)$, then $d_{\D}(z_n, z_{n + 1}) \leq \ell_{\D}(\delta_n) \leq d_0$ and $d_{V}(x_n, x_{n + 1}) \leq \ell_{V}(\pi(\delta_n)) \leq d_0$ for every~$n \geq 0$.

    Further, we structure the proof as a series of claims.

    \begin{claim1}
        If there exists a compact set $K \subset V$ such that $x_n \in K$ for infinitely many $n$, then the sequence $(z_n)$ converges in $\D$.
    \end{claim1}

    \begin{subproof}
        Since $\ell_{V}(\pi(\delta_n)) \leq d_0$, we can enlarge $K$ so that $\pi(\delta_n) \subset K$ for infinitely many~$n$. First, we demonstrate that there exists $\lambda < 1$ so that $c_U^V(z) \leq \lambda$ for all $z \in K \cap U$. Indeed, as we mentioned earlier, there exists a point $w \in U - V$. Therefore, for any $z \in K$, the distance $d_V(z, V - U) \leq d_V(z, w)$ is uniformly bounded from above. Hence, Proposition \ref{prop: mcmullens bound} and Remark \ref{rem: mcmullens bound} imply that such $\lambda$ exists. Finally, from the previous discussions, if $\pi(\delta_{n + 1}) \subset K$, then $\ell_{\D}(\delta_{n + 1}) \leq \lambda \ell_{\D}(\delta_n)$. In particular, this shows that $(\ell_{\D}(\delta_n))$ converges to~$0$ as~$n \to \infty$.

        There exists a subsequence of $(x_n)$ that converges to a point $x \in K$. By enlarging $K$, we can assume that $B_{V}(x, 2r) \subset K$ for a certain $r > 0$. Now, we choose $m$ such that $\ell_{\D}(\delta_m) < r(1 - \lambda) / 2$ and $d_{V}(x_m, x) < r$. Notice that $m$ is chosen so that $\ell_{\D}(\delta_m) \sum_{i = 0}^\infty \lambda^i  < r / 2$. Using this fact and applying induction (see \cite[Proof of Theorem 2.3, p. 20]{Nikita} for a similar argument), it can be shown that $\pi(\delta_{n}) \subset B_V(x, r)$ and $\ell_V(\pi(\delta_{n + 1})) \leq \lambda \ell_V(\pi(\delta_n))$ for all $n \geq m$. In other words, the distance between $z_n$ and $z_{n + 1}$ decreases exponentially, and the convergence follows from the completeness of the hyperbolic metric on $\D$.
    \end{subproof}

    \begin{claim2}
        If the sequence $(x_n)$ converges to $x \in P$ along some subsequence, then the entire sequence $(x_n)$ converges to $x$.
    \end{claim2}

    \begin{subproof}
    For any $x \in P$, we choose a neighborhood $V_x \subset \widehat{\C}$ so that if $x' \neq x''$, then $d_{V}(y_1, y_2) >~d_0$ for any $y_1 \in V_{x'} - P$ and $y_2 \in V_{x''} - P$. Now, suppose that the sequence $(x_n)$ has subsequences that converge to different limits $y' \in \widehat{\C}$ and $y'' \in \widehat{\C}$, respectively. 
    If either of $y'$ or $y''$ does not belong to $P$, then by Claim 1, the sequence $(z_n)$ must converge to a limit in $\D$, which leads to a contradiction.
    If we instead assume that $y', y'' \in P$, then it follows that $x_n \in V - \bigcup_{x \in P} V_x$ for infinitely many $n$. Once again, Claim~1 implies that $(z_n)$ must converge to a limit in~$\D$, and it also leads to a contradiction. Thus, the entire sequence $(x_n)$ converges to $x \in P$ as~$n \to \infty$.
    \end{subproof}

    \begin{claim3}
        If the sequence $(x_n)$ converges to $x \in P$, then $x$ is a fixed point of the map $g \colon U \to V$.
    \end{claim3}

    \begin{subproof}
        Suppose $x \in P$ is not an isolated removable singularity of $g$. Observe that for sufficiently large $n$, $\pi(\delta_n)$ lies within any given neighborhood of $x$. Therefore, based on Proposition~\ref{prop: coverings and inclusions} and the fact that $\|\mathrm{D}h(z)\|_{\D} = c_U^V(\pi(h(z)))$ for all $z \in \D$, we have $\ell_{\D}(\delta_{n + 1}) \leq \lambda \ell_{\D}(\delta_n)$ for $n$ large enough and some $\lambda < 1$. This means that the distance between $z_n$ and $z_{n + 1}$ decreases exponentially, so the sequence $(z_n)$ must have a limit in $\D$, leading to a contradiction. Therefore, $x \in P$ is an isolated removable singularity of $g$. Finally, since $x_n = g(x_{n + 1})$ for all $n \geq 0$, $x$ must be a fixed point of the map $g$.
    \end{subproof}

    \begin{claim4}
        If the sequence $(x_n)$ converges to a fixed point $x \in P$ of the map $g$, then $x$ is a repelling fixed point of $g$.
    \end{claim4}

    \begin{subproof}
        Without loss of generality, we assume $x = 0$.
        Let us choose a continuous parametrization $\delta_0\colon \I \to \D$ for the arc $\delta_0$. We then define a continuous curve $\delta \colon [0, +\infty) \to \D$ by setting $\delta(t + n) = h^{\circ n}(\delta_0(t))$ for any non-negative integer $n$ and $t \in \I$. It is straightforward to see that $\pi(\delta(t))$ converges to $0$ as $t$ tends to $\infty$, since $\pi(\delta(n)) = x_n \to x = 0$ as $n \to \infty$ and $\ell_V(\pi(\delta([n, n + 1]))) \leq d_0$ for all $n\geq 0$. Furthermore, note that $\pi(\delta(t)) = g(\pi(\delta(t + 1))$ for $t \geq 0$. Applying the Snail Lemma \cite[Lemma 13.2, Corollorary 13.3]{Milnor} to the curve $\pi \circ \delta$ and the map $g$ in a neighborhood of $0$, we obtain that either $|g'(0)| > 1$ or $g'(0) = 1$.

        Suppose $g'(0) = 1$. Further, we assume that $g$ is extended holomorphically to a neighborhood $0$. If we choose an arbitrary simply connected domain $D \subset \widehat{\C}$ such that $D \cap P = \{0\}$, then $g^{-1}(D)$ has a connected component $D'$ containing~0. Moreover, $D' \subset U \cup \{0\}$ is simply connected and $g|D'\colon D' \to D$ is a biholomorphism that fixes~$0$. Similarly, we can define the local inverse branches $\varphi_n \colon D \to U \cup \{0\}$ of $g^{\circ n}$ in a neighborhood of~$0$. In other words, $\varphi_n := (g^{\circ n}|D_n)^{-1}$, where $D_n$ is the connected component of 0 in~$g^{-n}(D)$. In particular, $\varphi_n$ is a biholomorphism, $\varphi_n(0) = 0$,~and~$(\varphi_n)'(0) = 1/(g^{\circ n})'(0) = 1$~for~every~$n \in \N$.
        
        Due to the previous discussions, $U \cup \{0\}$ does not contain at least three points of $\widehat{\C}$. Hence, Montel's theorem implies that the family $(\varphi_n)$ is normal. Therefore, up to a subsequence, it converges locally uniformly on~$D$ to some holomorphic map $\varphi$ such that $\varphi(0) = 0$ and $\varphi'(0) = 1$. Since $\varphi'(0) = 1$, $\varphi$ is injective in a neighborhood of 0.
        Thus, the iterates $(g^{\circ n})$ converge uniformly in a neighborhood of 0 to $\varphi^{-1}$, up to a subsequence. However, if
        $$
            g(z) = z + a_kz^k + \dots,
        $$
        where $a_k \neq 0$, $k \geq 2$, and three dots represent higher order terms, then
        $$
            g^{\circ n}(z) = z + na_kz^k + \dots,
        $$
        so $(g^{\circ n})^{(k)}(0)$ diverges as $n \to \infty$. This leads to a contradiction ruling out the possibility $g'(0) = 1$. Thus, $x = 0$ must be a repelling fixed point of the map $g$.
    \end{subproof}

    Claims 1-4 imply that if the sequence $(h^{\circ n}(z_0))$ diverges in $\D$, then the sequence $(\pi(h^{\circ n}(z_0)))$ converges to a repelling fixed point $x \in P$ of the map $g$. Furthermore, in this case $(\pi(h^{\circ n}(z)))$ converges to $x$ for every $z \in \D$. Indeed, this follows easily since $d_{V}(\pi(h^{\circ n}(z_0)), \pi(h^{\circ n}(z)))$ is bounded from above by $d_{\D}(z_0, z)$ due to Schwarz-Pick's lemma.    
\end{proof}

\section{Thurston theory} \label{sec: thurston theory}

In this section, we focus on the study of (marked) Thurston maps $f \colon (S^2, A) \rto$ that satisfy the following two conditions:
\begin{enumerate}[label=\normalfont{(\Roman*)}]
    \item \label{it: set A} the marked set $A$ contains exactly four points, and

    \item \label{it: set B} there exists a set $B \subset A$ such that $|B| = 3$, $S_f \subset B$, and $|\overline{f^{-1}(B)} \cap A| = 3$.
\end{enumerate}

It is evident that when the marked set $A$ coincides with the postsingular set $P_f$, conditions \ref{it: set A} and \ref{it: set B} are equivalent to conditions \ref{it: small s set}--\ref{it: extra property} from Section \ref{subsec: main results}. Also, it is worth noting that the case when $|\overline{f^{-1}(B)} \cap A| = 4$ or, equivalently, $A \subset \overline{f^{-1}(B)}$, is rather trivial due to Remark \ref{rem: constant sigma map}.

Using the tools developed Section \ref{sec: hyperbolic tools} and the theory of iteration of meromorphic functions, we analyze the corresponding pullbacks map defined on the one-complex dimensional Teichm\"uller space. It allows us to derived several properties of the corresponding Thurston maps and their Hurwitz classes. In particular, in Section~\ref{subsec: characterization problem} we prove Main Theorem~\ref{mainthm A} (see Theorem~\ref{thm: A}), and in Section~\ref{subsec: hurwitz classes} we prove Main Theorem~\ref{mainthm B} (see Theorem~\ref{thm: B}) and Corollary~\ref{corr: intro}.

Let $A = \{a_1, a_2, a_3, a_4\}$, $B = \{a_{i_1}, a_{i_2}, a_{i_3}\}$, and $C := \overline{f^{-1}(B)} \cap A = \{a_{j_1}, a_{j_2}, a_{j_3}\}$, where $i_1 < i_2 < i_3$ and $j_1 < j_2 < j_3$. Additionally, assume that $i$ and $j$ are the indices so that $a_i \in A - B$ and $a_j \in A - C$. Under this conventions, we have that $f(a_j) = a_i$ and $\deg(f, a_j) = 1$.

It is important to note that there may be multiple choices for the set $B$. However, when $|S_f| = 3$, the set $B$ is uniquely determined by the properties described in condition \ref{it: set B}. Also, another choice that we made, which will be relevant in our further arguments, is the indexing for the set $A$.

Now we are ready to introduce the following objects:
\begin{itemize}
    \item the map $\pi_B \colon \T_A \to \Sigma$, where $\Sigma = \widehat{\C} - \{0, 1, \infty\}$, is defined by $\pi_B([\varphi]) = \varphi(a_i)$, where the representative $\varphi \colon S^2 \to~\widehat{\C}$ is chosen so that $\varphi(a_{i_1}) = 0$, $\varphi(a_{i_2}) = 1$, and $\varphi(a_{i_3}) = \infty$;

    \item the map $\pi_C \colon \T_A \to \Sigma$ is defined by $\pi_C([\varphi]) = \varphi(a_j)$, where the representative $\varphi \colon S^2 \to~\widehat{\C}$ is chosen so that $\varphi(a_{j_1}) = 0$, $\varphi(a_{j_2}) = 1$, and $\varphi(a_{j_3}) = \infty$;

    \item the map $\omega_f\colon \T_A \to \Sigma$ is defined by $\omega_f = \pi_C \circ \sigma_f$;

    \item the map $F_f \colon \widehat{\C} \dto \widehat{\C}$ is a unique (which follows from Proposition \ref{prop: dependence}) holomorphic map such that $F_f = \varphi \circ f \circ \psi^{-1}$, where $\varphi, \psi \colon S^2 \to \widehat{\C}$ are orientation-preserving homeomorphisms satisfying $\varphi(a_{i_1}) = \psi(a_{j_1}) =~0$, $\varphi(a_{i_2}) = \psi(a_{j_2}) = 1$, and $\varphi(a_{i_3}) = \psi(a_{j_3}) = \infty$;

    \item the map $M_{i,j} \colon \Sigma \to \Sigma$ is defined as $h_i \circ h_j^{-1}$, where $h_i \colon \M_A \to \Sigma$ is defined by $h_i([\eta]) = \eta(a_i)$, with $\eta \colon A \to \widehat{\C}$ chosen so that $\eta(a_{i_1}) = 0$, $\eta(a_{i_2}) = 1$, $\eta(a_{i_3}) = \infty$, and the map $h_j \colon \M_A \to \Sigma$ is defined analogously.

    \item the set $W_f := \widehat{\C} - \overline{F_f^{-1}(\{0, 1, \infty\})}$ is a domain of $\widehat{\C}$;

    \item the map $G_f := F_f \circ M_{i, j}^{-1} \colon M_{i, j}(W_f) \to \Sigma$.
\end{itemize}

Of course, the maps $F_f$ and $G_f$, as well as the domain $W_f$, also depend on the choice of the subset $B$, and many other objects defined above depend on the indexing of $A$. However, for the simplicity, we are going to exclude these dependencies from the notation. Throughout this section, we will maintain the notation and conventions established above. Specifically, if $\widetilde{f} \colon (S^2, A) \rto$ is any Thurston map that is Hurwitz equivalent to $f \colon (S^2, A) \rto$, then we use the same set $B$ and the same indexing of the set $A$ when we work with the map $\widetilde{f}$ as we~do~it~with~$f$.

\begin{proposition}\label{prop: 1-dim pullback maps}
    The objects introduced above satisfy the following properties:
    \begin{enumerate}
        \item\label{it: diagram} diagram \eqref{fig: first fund diag} commutes.
        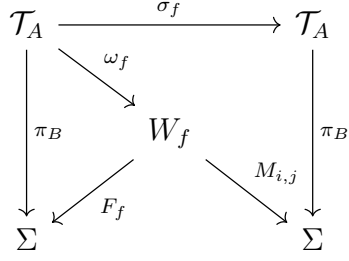
\begin{figure}[h]
        \begin{tikzcd}
            \T_A \arrow[rr, "\sigma_f"] \arrow[dd, "\pi_B"] \arrow[dr, "\omega_f"] & & \T_A \arrow [dd, "\pi_B"]\\
            & W_f \arrow[dl, "F_f"] \arrow[dr, "M_{i, j}"] & \\
            \Sigma  & & \Sigma            \end{tikzcd} 
        \caption{Fundamental diagram for Thurston maps satisfying conditions~\ref{it: set A} and~\ref{it: set B}.}\label{fig: first fund diag}
        \end{figure}
        \item\label{it: map F} $S_{F_f} \subset \{0, 1, \infty\}$ and $F_f \colon W_f \to \Sigma$ is a covering map;
        
        \item\label{it: coverings} $\pi_B \colon \T_A \to \Sigma$, $\pi_C \colon \T_A \to \Sigma$, and $\omega_f \colon \T_A \to W_f$ are holomorphic covering~maps;

        \item\label{it: map M} $M_{i, j}$ extends to a M\"obius transformation such that $\{0, 1, \infty\}$ is an $M_{i, j}$-invariant~subset;

        \item\label{it: holomorphicicty of sigma} $\sigma_f(\T_A) \subsetneq \T_A$ is open and dense in $\T_A$ and $\sigma_f \colon \T_A \to \sigma_f(\T_A)$ is a holomorphic covering~map;

        \item\label{it: hurwitz class} the maps $F_f$ and $G_f$, as well as the domain $W_f$, depend only on the Hurwitz equivalence class of the Thurston map $f$.
\end{enumerate}
\end{proposition}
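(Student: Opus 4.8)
The plan is to establish the six items in the stated order, since each uses the preceding ones. Say that a homeomorphism (or a point of $\M_A$) is \emph{normalized on} a three-point subset $T\subset A$ if it carries the points of $T$, in the fixed order, to $0,1,\infty$. \textbf{Item (1)} I would obtain by unwinding the definitions while tracking normalizations: taking $\tau=[\varphi]$ with $\varphi$ normalized on $\{a_{i_1},a_{i_2},a_{i_3}\}$ and $\sigma_f(\tau)=[\psi]$ with $\psi$ normalized on $\{a_{j_1},a_{j_2},a_{j_3}\}$ (available from Proposition~\ref{prop: def of sigma map}), the uniqueness clause of Proposition~\ref{prop: dependence} identifies $\varphi\circ f\circ\psi^{-1}$ with $F_f$; then $\omega_f(\tau)=\pi_C(\sigma_f(\tau))=\psi(a_j)\in W_f$ (using $a_j\notin\overline{f^{-1}(B)}$), so $F_f(\omega_f(\tau))=\varphi(f(a_j))=\varphi(a_i)=\pi_B(\tau)$ gives the left triangle and $M_{i,j}(\omega_f(\tau))=h_i([\psi|A])=\pi_B(\sigma_f(\tau))$ gives the right triangle.

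\textbf{Items (2), (4) and (3).} For (2): conjugation by homeomorphisms preserves singular values, so $S_{F_f}=\varphi(S_f)\subset\varphi(B)=\{0,1,\infty\}$, and the covering statement of Section~\ref{subsec: topologically holomorphic maps}, applied to $F_f$ and the three-point set $\{0,1,\infty\}$, is precisely that $F_f\colon W_f\to\Sigma$ is a covering. For (4): $h_i$ and $h_j$ are the explicit biholomorphisms $\M_A\to\Sigma$ of Section~\ref{subsec: teichmuller spaces}, so $M_{i,j}=h_i\circ h_j^{-1}$ is a conformal automorphism of $\Sigma$; any such automorphism extends meromorphically over the three punctures (removable singularities for a local inverse), hence to a bijective meromorphic, i.e.\ Möbius, self-map of $\widehat{\C}$, which must then permute $\{0,1,\infty\}$ (equivalently, $M_{i,j}$ is one of the six anharmonic cross-ratio substitutions coming from a reordering of $A$). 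For (3): $\pi_B=h_i\circ\pi$ and $\pi_C=h_j\circ\pi$ are compositions of the holomorphic universal covering $\pi\colon\T_A\to\M_A$ with biholomorphisms, hence holomorphic coverings; $\omega_f=\pi_C\circ\sigma_f$ is holomorphic (Proposition~\ref{prop: propeties of pullback map}, or because $F_f\circ\omega_f=\pi_B$ exhibits it locally as a branch of $F_f^{-1}$ after $\pi_B$), has image in the connected set $W_f$ by item (1), and is a covering onto $W_f$ by the lifting lemma \cite[Section~1.3, Exercise~16]{hatcher} applied to the factorization $\pi_B=F_f\circ\omega_f$ of coverings of $\Sigma$.

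\textbf{Item (5).} Since $\pi_B\circ\sigma_f=M_{i,j}\circ\omega_f$ is a covering onto the domain $\Omega:=M_{i,j}(W_f)$, it is non-constant, so $\sigma_f$ is non-constant, hence open, giving $\sigma_f(\T_A)$ open; moreover $\Omega\subsetneq\Sigma$ because $\Sigma-W_f\neq\emptyset$ (Riemann--Hurwitz and Great Picard, exactly as in the proof of Theorem~\ref{thm: iteration on unit disk}), so $\sigma_f(\T_A)\subset\pi_B^{-1}(\Omega)\subsetneq\T_A$. The crucial point is that $\Sigma-\Omega=M_{i,j}\big(\overline{F_f^{-1}(\{0,1,\infty\})}\cap\Sigma\big)$ is a \emph{countable} closed subset of $\Sigma$ (the fibers of $F_f$ are discrete and at most the essential singularity is adjoined), so $\pi_B^{-1}(\Sigma-\Omega)$ is a countable closed subset of $\T_A\cong\D$, whence $\pi_B^{-1}(\Omega)$ is connected and dense in $\T_A$. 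Then $\sigma_f\colon\T_A\to\pi_B^{-1}(\Omega)$ is a lift of the covering $\pi_B\circ\sigma_f$ through the covering $\pi_B|_{\pi_B^{-1}(\Omega)}$, so by \cite[Section~1.3, Exercise~16]{hatcher} it is a covering map onto the connected space $\pi_B^{-1}(\Omega)$, hence onto; thus $\sigma_f(\T_A)=\pi_B^{-1}(\Omega)$ is open, dense and proper in $\T_A$, and $\sigma_f\colon\T_A\to\sigma_f(\T_A)$ is a holomorphic covering.

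\textbf{Item (6), and the main difficulty.} If $\widetilde f$ is Hurwitz equivalent to $f$, say $\phi_1\circ f=\widetilde f\circ\phi_2$ with $\phi_1,\phi_2\in\Homeo^+(S^2,A)$, then $F_f=\varphi\circ f\circ\psi^{-1}=(\varphi\circ\phi_1^{-1})\circ\widetilde f\circ(\psi\circ\phi_2^{-1})^{-1}$, and because $\phi_1,\phi_2$ fix $A$ (hence $B$ and $C$) pointwise, $\varphi\circ\phi_1^{-1}$ and $\psi\circ\phi_2^{-1}$ are still normalized on $B$ and on $C$ respectively; Proposition~\ref{prop: dependence} then forces $F_f=F_{\widetilde f}$, whence $W_f=W_{\widetilde f}$, and $G_f=G_{\widetilde f}$ because $M_{i,j}$ depends only on $A$ and its indexing. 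I expect the only genuinely non-formal step to be item (5): identifying $\sigma_f(\T_A)$ inside $\T_A$, and its density in particular. That hinges on $\Sigma-\Omega$ being countable rather than an arbitrary closed set, which in turn uses the discreteness of the fibers of the topologically holomorphic map $F_f$ and the fact that, when present, its essential singularity is a single point; everything else is unwinding of definitions together with two applications of the covering-lifting lemma and the explicit models of $\T_A$ and $\M_A$.
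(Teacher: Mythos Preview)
Your proposal is correct and follows essentially the same route as the paper: the same choice of normalized representatives for item~(1), the same use of the singular-set definition for item~(2), the same Hatcher covering-lifting lemma applied to $\pi_B=F_f\circ\omega_f$ for item~(3), the same identification of $M_{i,j}$ as a conformal automorphism of $\Sigma$ for item~(4), and the same conjugation-by-$\phi_k$ argument for item~(6).

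The one place where you are more careful than the paper is item~(5). The paper simply asserts that $\sigma_f(\T_A)=\pi_B^{-1}(M_{i,j}(W_f))$ and that this set is open and dense because $W_f$ is open and dense in $\widehat{\C}$; it does not explicitly justify the surjectivity of $\sigma_f$ onto $\pi_B^{-1}(\Omega)$, which requires knowing that $\pi_B^{-1}(\Omega)$ is connected. You fill this gap by observing that $\Sigma-\Omega$ is a countable closed subset of $\Sigma$ (discrete fibers of $F_f$, plus at most one essential singularity), hence its $\pi_B$-preimage is countable and cannot disconnect $\T_A\cong\D$; this is exactly the missing step, and your argument is cleaner on this point. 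Otherwise the two proofs are interchangeable.
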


\begin{proof}
    Let $\tau = [\varphi] \in \T_A$ and $\sigma_f(\tau) = [\psi] \in \T_A$, where the representatives $\varphi, \psi \colon S^2 \to \widehat{\C}$ are chosen so that $\varphi(a_{i_1}) = \psi(a_{j_1}) = 0$, $\varphi(a_{i_2}) = \psi(a_{j_2}) = 1$, $\varphi(a_{i_3}) = \psi(a_{j_3}) = \infty$, and the map $\varphi \circ f \circ \psi^{-1} \colon \widehat{\C} \dto \widehat{\C}$ is holomorphic. In particular, $\varphi \circ f \circ \psi^{-1}$ coincides with the map~$F_f$. Since, $\omega_f([\varphi]) = \pi_C([\psi]) = \psi(a_j)$, we have the following 
    $$(F_f \circ \omega_f)([\varphi]) = F_f(\psi(a_j)) = \varphi(f(a_j)) = \varphi(a_i) = \pi_B({\varphi}).$$ Thus, $\pi_B = F_f \circ \omega_f$. At the same time, it is straightforward to verify that $\pi_B = M_{i, j} \circ \pi_C$. Finally, $M_{i, j} \circ \omega_f = M_{i, j} \circ \pi_C \circ \sigma_f = \pi_B \circ \sigma_f$, and item (\ref{it: diagram}) follows.

    Item (\ref{it: map F}) directly follows from the definition of a singular set. Maps $\pi_B$ and $\pi_C$ are holomorphic coverings, as discussed in Section \ref{subsec: teichmuller spaces}. The map $\omega_f \colon \T_A \to W_f$ is also a holomorphic covering since both $\pi_B \colon \T_A \to \Sigma$ and $F_f \colon W_f \to \Sigma$ are holomorphic covering maps, and $\pi_B = F_f \circ \omega_f$ (see \cite[Section 1.3, Excercise 16]{hatcher}). Hence, item (\ref{it: coverings}) follows.

    The discussion of Section \ref{subsec: teichmuller spaces} shows that the maps $h_i, h_j \colon \M_A \to \Sigma$ are holomorphic. Consequently, $M_{i, j}$ is a conformal automorphism of $\Sigma$ that extends to a M\"obius transformation of $\widehat{\C}$ permuting 0, 1, and $\infty$. Alternative way to prove item (\ref{it: map M}) is through direct computation. For example, when $i = j$, then $M_{i, j} = \id_{\widehat{\C}}$; if $i = 1$ and $j = 2$, then $M_{1, 2}(z) = z / (z - 1)$,~and~so~on.

    Since $M_{i, j} \circ \omega_f = \pi_B \circ \sigma_f$, where $\omega_f$ and $\pi_B$ are holomorphic covering maps and $M_{i, j}$ is a M\"obius transformation, it follows that $\sigma_f \colon \T_A \to \sigma_f(\T_A)$ is also a holomorphic covering map, where $\sigma_f(\T_A) = \pi_B^{-1}(M_{i, j}(W_f))$. Note that by Great Picard's Theorem and the Riemann-Hurwitz formula \cite[Appendix A.3]{Hubbard_Book_1}, the set $\widehat{\C} - W_f$ contains at least one point different from the points $\{0, 1, \infty\}$. Therefore, $\sigma_f(\T_A)$ is different from $\T_A$. Since $W_f$ is open and dense in $\widehat{\C}$, it follows that $\sigma_f(\T_A)$ is open and dense in $\T_A$, establishing item \eqref{it: holomorphicicty of sigma}.
    

    Finally, let $\widetilde{f}$ be a Thurston map Hurwitz equivalent rel.\ $A$ to $f$. Suppose that $\phi_1 \circ \widetilde{f} = f \circ \phi_2$, where $\phi_1, \phi_2 \in \Homeo_0^+(S^2, A)$. Then $F_f = \widetilde{\varphi} \circ \widetilde{f} \circ \widetilde{\psi}^{-1}$, where $\widetilde{\varphi} = \varphi \circ \phi_1$ and $\widetilde{\psi} = \psi \circ \phi_2$. In particular, $\widetilde{\varphi}(a_{i_1}) = \widetilde{\psi}(a_{j_1}) = 0$, $\widetilde{\varphi}(a_{i_2}) = \widetilde{\psi}(a_{j_2}) = 1$, and $\widetilde{\varphi}(a_{i_3}) = \widetilde{\psi}(a_{j_3}) = \infty$. Therefore, $F_f$ and $F_{\widetilde{f}}$ coincide, as well as $W_f$ and $W_{\widetilde{f}}$, and item (\ref{it: hurwitz class}) follows.
\end{proof}    

\begin{remark}\label{rem: comparision with finite degree case}
    Proposition \ref{prop: 1-dim pullback maps} shows that for Thurston maps satisfying conditions \ref{it: set A} and~\ref{it: set B}, the corresponding pullback maps admit a commutative diagram analogous to diagram~\eqref{fig: fund diag} from Section \ref{subsec: intro pullback maps}. In this context, the M\"obius transformation $M_{i, j}$ serves the role of the $X$-map, the map $F_f$ takes the place of the $Y$-map, and the domain $W_f$ is an analog of the Hurwitz space. In particular, $\sigma_f$ has the ``$g$-map'' $G_f = F_f \circ M_{i, j}^{-1} \colon M_{i, j}(W_f) \to \Sigma$.
    
    In contrast to the finite degree case, when $f\colon (S^2, A) \rto$ is transcendental, the map $F_f \colon W_f \to \Sigma$ is a covering of infinite degree and $W_f$ is a countably punctured Riemann~sphere.
\end{remark}

\begin{remark}\label{rem: omitted values}
    Suppose that we are in the setting of Proposition \ref{prop: 1-dim pullback maps}. We have observed that $\sigma_f(\T_A) = \pi_B^{-1}(M_{i, j}(W_f))$ and
    the complement of the set $M_{i, j}(W_f)$ in $\Sigma$ contains at least one point. Given that $\pi_B \colon \T_A \to \Sigma$ is a covering map of infinite degree, the pullback map~$\sigma_f$ has infinitely many omitted values, i.e., the points of $\T_A - \sigma_f(\T_A)$, and they are not compactly contained in $\T_A$. Moreover, the set of omitted values of $\sigma_f$ is not discrete in the Teichm\"uller space $\T_A$ if the essential singularity of the map $f$ lies within the set $S^2 - A$.
\end{remark}

\subsection{Characterization problem} \label{subsec: characterization problem} In this section, we present and prove a slightly stronger version of Main Theorem \ref{mainthm A} utilizing the tools developed in Sections \ref{subsec: levy cycles} and \ref{subsec: iteration on the unit disk} along with the properties of pullback maps established in Proposition \ref{prop: 1-dim pullback maps}.

We recall that the Teichm\"uller space $\T_A$, where $|A| = 4$, is biholomorphic to the unit disk~$\D$, and the metric $d_T$ defined in Section \ref{subsec: teichmuller spaces} coincides with the hyperbolic metric $d_{\D}$. If $f\colon (S^2, A) \rto$ is a Thurston map satisfying assumptions \ref{it: set A} and~\ref{it: set B}, then the the corresponding pullback map $\sigma_f \colon \T_A \to \T_A$ is holomorphic. It can be established in two ways: either through the general approach outlined in Proposition \ref{prop: propeties of pullback map} or by more elementary methods as in item \eqref{it: holomorphicicty of sigma} of~Proposition~\ref{prop: 1-dim pullback maps}. It is worth mentioning that item \eqref{it: holomorphicicty of sigma} of Proposition \ref{prop: 1-dim pullback maps} and Schwarz-Pick's lemma imply that $\sigma_f$ is distance-decreasing on the Teichm\"uller space $\T_A$ (cf. Remark \ref{rem: further properties}).

\begin{theorem}\label{thm: A}
    Let $f \colon (S^2, A) \rto$ be a Thurston map of finite or infinite degree that satisfies properties \ref{it: set A} and~\ref{it: set B}. Then $f$ is realized rel.\ $A$ if and only if it has no weakly degenerate Levy fixed curve. Moreover,
    \begin{enumerate}
        \item \label{it: unique conjugacy} if $f$ is realized rel.\ $A$ by postsingularly finite holomorphic maps $g_1 \colon (\widehat{\C}, P_1) \rto$ and $g_2 \colon (\widehat{\C}, P_2)\rto$, then $g_1$ and $g_2$ are conjugate by a M\"obius transformation $M$, i.e., $M \circ g_1 = g_2 \circ M$, such that $M(P_1) = P_2$;
        \item \label{it: unique Levy cycle} if $f$ is obstructed rel.\ $A$, then it has a unique Levy fixed curve up to~homotopy~in~$S^2 -~A$.
    \end{enumerate}
\end{theorem}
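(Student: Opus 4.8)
The plan is to translate the entire problem into the dynamics of the holomorphic self-map $h = \sigma_f \colon \T_A \to \T_A$ of the unit disk via the fundamental diagram \eqref{fig: first fund diag}, and then invoke Theorem~\ref{thm: iteration on unit disk} directly. First I would set $h := \sigma_f$, $\pi := \pi_B \colon \T_A \to \Sigma$, and $g := G_f = F_f \circ M_{i,j}^{-1} \colon M_{i,j}(W_f) \to \Sigma$, with $U := M_{i,j}(W_f)$ and $V := \Sigma = \widehat{\C} - \{0,1,\infty\}$. By Proposition~\ref{prop: 1-dim pullback maps}, $\pi$ and the restriction $F_f \colon W_f \to \Sigma$ are holomorphic coverings, $M_{i,j}$ is a M\"obius transformation preserving $\{0,1,\infty\}$, and the diagram gives $\pi_B = F_f \circ \omega_f = M_{i,j} \circ \pi_C \circ \sigma_f$ and $M_{i,j}\circ\omega_f = \pi_B\circ\sigma_f$; chasing this yields precisely $\pi = g \circ \pi \circ h$ together with $\pi(h(\T_A)) = \pi(\sigma_f(\T_A)) \subset M_{i,j}(W_f) = U$. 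The map $g$ is non-injective exactly when $F_f$ is non-injective, i.e.\ exactly when $\deg(F_f) \geq 2$ on $W_f$; I should note that by condition~\ref{it: set B} the set $\overline{f^{-1}(B)}$ meets $A$ in only three points, so $f$ is genuinely non-injective and $F_f \colon W_f \to \Sigma$ has degree $\geq 2$ (when $\deg F_f = 1$ the map $\sigma_f$ is constant, contradicting Remark~\ref{rem: constant sigma map}; this degenerate situation is explicitly excluded by $|\overline{f^{-1}(B)}\cap A| = 3$). So Theorem~\ref{thm: iteration on unit disk} applies and gives the dichotomy: either (1) every $h$-orbit converges to the unique fixed point of $h$ in $\T_A$, or (2) $(\pi(h^{\circ n}(\tau)))$ converges to the same repelling fixed point $x \in \{0,1,\infty\}$ of $g$, independently of $\tau$.

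Next I would extract the realizability statement. In case (1), $\sigma_f$ has a fixed point in $\T_A$, so $f$ is realized by Proposition~\ref{prop: fixed point of sigma}. In case (2), for any $\tau$ the sequence $\pi_B(\sigma_f^{\circ n}(\tau))$ converges to a point of $\partial\Sigma$, hence $(\pi(\sigma_f^{\circ n}(\tau)))$ eventually leaves every compact subset of $\M_A \cong \Sigma$ (using that $\pi_B$ differs from the modular projection $\pi\colon\T_A\to\M_A$ only by the biholomorphism $h_i$, so compact exhaustions correspond). I then want to produce a weakly degenerate Levy fixed curve. The tool is Proposition~\ref{prop: finding levy cycles}: I need, for a suitable representative $\tau = [\varphi]$ with $\sigma_f(\tau) = [\psi]$ and $g_\varphi = \varphi\circ f\circ\psi^{-1}$ holomorphic, an annulus $U^\ast \subset \widehat{\C}$ separating $\psi(A)$ into two pairs, with $\mod(U^\ast) > 5\pi e^{d_0}/\ell^\ast$ where $d_0 = d_T(\tau,\sigma_f(\tau))$, on which $g_\varphi$ is defined and injective — and injective outside a single complementary component to get weak degeneracy. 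The point is that as $\pi_B(\sigma_f^{\circ n}(\tau))$ tends to a cusp $x$, two of the four points $\psi_n(A)$ collide in the spherical sense while the other two stay apart, which forces a large-modulus separating annulus in $Y_n = \widehat{\C} - \psi_n(A)$; pulling back through the covering $g_n$, Proposition~\ref{prop: coverings and inclusions} (the cusp $x$ is a repelling fixed point of $g$, hence not a removable singularity of $F_f$, hence the contraction $c^V_U \to 0$ near $x$) shows that the relevant annulus in the domain is mapped with a definite bound, and injectivity on a large subannulus is available because $F_f$ near the cusp behaves like a local homeomorphism onto its image after removing the preimages of $\{0,1,\infty\}$ — here I must be a bit careful: $F_f$ is a covering of possibly infinite degree, but on a small punctured neighborhood of the cusp in the target the covering is infinite-cyclic (an exponential-type model), and on each fundamental strip it is injective. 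Choosing $n$ large makes $d_0$ small (in fact one can replace $d_0$ by $d_T(\sigma_f^{\circ n}(\tau),\sigma_f^{\circ(n+1)}(\tau)) \to 0$ since $\sigma_f$ is distance-decreasing and orbits diverge, or just bound it by the fixed constant $d_T(\tau,\sigma_f(\tau))$ and take the modulus large instead), so the hypotheses of Proposition~\ref{prop: finding levy cycles} are met and we obtain a weakly degenerate Levy fixed curve; conversely Proposition~\ref{prop: levy cycles are obstructions} shows the existence of any Levy cycle already obstructs $f$, so case (2) is exactly the obstructed case and the equivalence ``$f$ realized $\iff$ no weakly degenerate Levy fixed curve'' follows.

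For the two ``moreover'' items: part~\eqref{it: unique conjugacy} is the standard rigidity argument — if $g_1, g_2$ both realize $f$ rel.\ $A$, then $\sigma_f$ has the two corresponding fixed points in $\T_A$, but $\sigma_f$ is distance-decreasing on $\T_A$ (item~\eqref{it: holomorphicicty of sigma} of Proposition~\ref{prop: 1-dim pullback maps} with Schwarz--Pick), so the fixed point is unique; unwinding the definition of combinatorial equivalence at that common fixed point produces the M\"obius conjugacy $M$ with $M(P_1) = P_2$ (this is the usual argument, e.g.\ as in the proof of Proposition~\ref{prop: fixed point of sigma}). Part~\eqref{it: unique Levy cycle} is immediate from Proposition~\ref{prop: uniqueness of levy cycles}: if $f$ is obstructed we are in case (2), so for every $\tau$ the sequence $(\pi(\sigma_f^{\circ n}(\tau)))$ eventually leaves every compact subset of $\M_A$, which is precisely the hypothesis of that proposition, giving uniqueness of the Levy fixed curve up to homotopy in $S^2 - A$. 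I expect the main obstacle to be the careful verification in case (2) that the collision of two punctures at the cusp, combined with the covering structure of $F_f$ near the cusp, yields an annulus of the required modulus on which $g_\varphi$ is \emph{injective} and injective outside a single complementary component — i.e.\ correctly identifying the local exponential-type model of the infinite-degree covering $F_f$ at the repelling cusp and checking that the weak-degeneracy condition (injectivity of $f$ on one side of $\widetilde\gamma$) transfers through it.
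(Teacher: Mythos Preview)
Your overall strategy matches the paper's: set up the hypotheses of Theorem~\ref{thm: iteration on unit disk} via the fundamental diagram, split into the two cases, get realization from case~(1), and in case~(2) apply Proposition~\ref{prop: finding levy cycles} to produce the Levy fixed curve, with Propositions~\ref{prop: levy cycles are obstructions} and~\ref{prop: uniqueness of levy cycles} handling the converse and uniqueness. The ``moreover'' items are also treated correctly.

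The genuine gap is in your construction of the annulus in case~(2). You write that the repelling fixed point $x$ of $G_f$ is ``hence not a removable singularity of $F_f$,'' and then try to analyze the infinite-degree covering structure of $F_f$ over a punctured neighborhood of the cusp via an ``exponential-type model.'' This is backwards. By the paper's convention (stated just before Theorem~\ref{thm: iteration on unit disk}), a repelling fixed point $x \in \{0,1,\infty\}$ of $G_f$ is by definition an isolated removable singularity at which the holomorphic extension satisfies $G_f(x)=x$ with $|G_f'(x)|>1$. Since $G_f = F_f \circ M_{i,j}^{-1}$ and $M_{i,j}$ is M\"obius, the point $y := M_{i,j}^{-1}(x) \in \{0,1,\infty\}$ is a \emph{regular} point of $F_f$: a removable singularity with local degree~$1$. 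Thus $F_f$ extends to a biholomorphism on some disk $D$ with $D \cap \{0,1,\infty\} = \{y\}$, and no covering-theoretic analysis is needed.

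With the normalization $\psi_n(a_{j_1})=0$, $\psi_n(a_{j_2})=1$, $\psi_n(a_{j_3})=\infty$ (so that $g_n \equiv F_f$), one has $\psi_n(A) = \{0,1,\infty,y_n\}$ with $y_n = \pi_C(\tau_{n+1}) = M_{i,j}^{-1}(x_{n+1}) \to y$. Choose $D' \subset D$ with $\mod(D\setminus\overline{D'}) > 5\pi e^{d_0}/\ell^*$ for the fixed constant $d_0 = d_T(\tau_0,\tau_1)$; for $n$ large, $y_n \in D'$, the annulus $D\setminus\overline{D'}$ separates $\psi_n(A)$ into two pairs, and $F_f$ is defined and injective on all of $D$ (hence on the annulus and on the bounded complementary component), which is exactly what Proposition~\ref{prop: finding levy cycles} requires for a weakly degenerate Levy fixed curve. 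Your invocation of Proposition~\ref{prop: coverings and inclusions} is misplaced here: that proposition is used \emph{inside} the proof of Theorem~\ref{thm: iteration on unit disk} to rule out non-removable cusps, not in the present step.
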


\begin{proof}
Suppose that the Thurston map $f \colon (S^2, A) \rto$ is realized. According to Proposition~\ref{prop: levy cycles are obstructions}, $f\colon (S^2, A) \rto$ cannot have a Levy fixed curve. From Proposition \ref{prop: fixed point of sigma}, it follows that $\sigma_f$ has a fixed point in the Teichm\"uller space $\T_A$. As it was discussed previously, $\sigma_f$ is distance-decreasing on $\T_A$, which implies that it has a unique fixed point. Now, it is straightforward to verify using Proposition \ref{prop: def of sigma map} that item \eqref{it: unique conjugacy} holds (cf. \cite[Corollary~10.7.8]{Hubbard_Book_2} and \cite[Proposition 2.26]{our_approx}). 

Now, suppose that the Thurston map $f \colon (S^2, A) \rto$ is obstructed. Choose an arbitrary point $\tau_0 \in \T_A$ and define $\tau_n := \sigma_f^{\circ n}(\tau_0)$, $x_n := \pi_B(\tau_n)$, and $y_n := \pi_C(\tau_{n + 1})$ for every $n \geq 0$. Let $\tau_n = [\varphi_n] \in \T_A$, where the representative $\varphi_n \colon S^2 \to \widehat{\C}$ is chosen so that $\varphi_n(a_{i_1}) = 0$, $\varphi_n(a_{i_2}) = 1$, and $\varphi_n(a_{i_3}) = \infty$. Denote by $\psi_n \colon S^2 \to \widehat{\C}$ the unique (due to Proposition \ref{prop: def of sigma map}) orientation-preserving homeomorphism so that $g_n := \varphi_n \circ f \circ \psi_n^{-1} \colon \widehat{\C} \dto \widehat{\C}$ is holomorphic and $\psi_n(a_{j_1}) = 0$, $\psi_n(a_{j_2}) = 1$, and $\psi_n(a_{j_3}) = \infty$. According to the definition of the map $F_f$, it must coincide with the map $g_n$ for every $n \geq 0$. Moreover, it follows that $\tau_{n + 1} = [\psi_n]$. From this, we observe that $\varphi_n(a_i) = x_n$, $\psi_n(a_j) = y_n$, and $F_f(y_n) = x_n$ for every $n \geq 0$.

According to item \eqref{it: diagram} of Proposition \ref{prop: 1-dim pullback maps}, we have $\pi_B(\sigma_f(\T_A)) \subset M_{i, j}(W_f)$ and $\pi_B = G_f \circ \pi_B \circ \sigma_f$. Since $G_f$ is not injective, items \eqref{it: map F}-\eqref{it: holomorphicicty of sigma} of Proposition \ref{prop: 1-dim pullback maps} allow us to apply Theorem \ref{thm: iteration on unit disk} to the pullback map $\sigma_f$. This shows that the sequence $(x_n)$ converges to a repelling fixed point $x \in \{0, 1, \infty\}$ of the map~$G_f$. Given that $x_{n + 1} = M_{i, j}(y_n)$, the sequence $(y_n)$ converges to a regular point $y \in \{0, 1, \infty\}$ of the map $F_f$ due to item \eqref{it: map M} of Proposition~\ref{prop: 1-dim pullback maps}. 

We assume that the map $F_f$ is extended holomorphically to a neighborhood of $y$. Then there exists a disk~$D \subset~\widehat{\C}$ such that $D \cap \{0, 1, \infty\} = \{y\}$ and $F_f$ is injective on~$D$. Consider another disk $D'$ such that $\overline{D'} \subset D$ and the annulus $D - \overline{D'}$ has modulus greater than $5\pi e^{d_0}/\ell^*$, where $d_0 = d_T(\tau_0, \tau_1)$ and $\ell^* = \log(3 + 2 \sqrt{2})$. Observe that $y_n \in D'$ for all $n$ large enough and, in particular, each connected component of $\widehat{\C} - (D - \overline{D'})$ contains two points of the set $\psi_n(A)$.

Finally, by Schwarz-Pick's lemma, we have $d_T(\tau_n, \tau_{n + 1}) \leq d_T(\tau_0, \tau_1)$ for every $n \geq 0$. The existence of a weakly degenerate Levy fixed curve for the Thurston map $f\colon (S^2, A) \rto$ then follows from Proposition \ref{prop: finding levy cycles}, applied to $\tau_n = [\varphi_n]$ and $\tau_{n + 1} = [\psi_{n}]$, where $n$ is taken sufficiently large, and the annulus $D - \overline{D'}$ as above. The uniqueness part follows from Proposition \ref{prop: uniqueness of levy cycles} since the sequence $(\pi(\sigma_f^{\circ n}(\tau_0)))$ clearly leaves every compact set of the moduli space~$\M_A$.
\end{proof}

\subsection{Hurwitz classes} \label{subsec: hurwitz classes} 
In Section \ref{subsec: characterization problem}, we demonstrated how Proposition \ref{prop: 1-dim pullback maps}, especially commutative diagram \eqref{fig: first fund diag}, can be helpful for studying Thurston maps that satisfy assumptions \ref{it: set A} and~\ref{it: set B}. In this section, we further develop this idea by showing the significance of the dynamical properties of the map $G_f$ in understanding the Hurwitz class $\Hurw_{f, A}$ of a Thurston map $f \colon (S^2, A) \rto$ that satisfies properties \ref{it: set A} and \ref{it: set B}. 
In particular, we prove Main Theorem~\ref{mainthm B} (see Theorem~\ref{thm: B}) and Corollary \ref{corr: intro}. However, before proceeding with their proofs, we present two propositions that relate the fixed points of the map $G_f$ to the Thurston maps in the Hurwitz class $\Hurw_{f, A}$, which are either obstructed or realized depending on the properties of the corresponding fixed point. 

\begin{proposition}\label{prop: fixed points and realized maps}
    Let $f \colon (S^2, A) \rto$ be a Thurston map that satisfies conditions \ref{it: set A} and~\ref{it: set B}. Suppose that $x \in \Sigma$ is a fixed point of the map $G_f$. Then there exists a homeomorphism $\phi \in \Homeo^+(S^2, A)$ such that the Thurston map $\widetilde{f} := \phi \circ f \colon (S^2, A) \rto$ is realized by a holomorphic map $g \colon (\widehat{\C}, P) \rto$, where $P = \{0, 1, \infty, x\}$.
\end{proposition}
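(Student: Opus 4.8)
The plan is to produce the required homeomorphism $\phi$ by exhibiting a fixed point of the pullback map of $\widetilde f := \phi\circ f$ and then invoking Proposition~\ref{prop: fixed point of sigma}. Recall from Remark~\ref{rem: functoriality} that $\sigma_{\phi\circ f}=\sigma_f\circ\sigma_\phi$ and $\sigma_\phi([\psi])=[\psi\circ\phi]$; so it suffices to find $\phi\in\Homeo^+(S^2,A)$ and a point $\tau\in\T_A$ with $\sigma_f(\sigma_\phi(\tau))=\tau$ and, in order to control the marked set of the realization, with $\pi_B(\tau)=x$.

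First I would locate the relevant points of $\T_A$. Put $y:=M_{i,j}^{-1}(x)$; since $x\in\Sigma$ is a fixed point of $G_f=F_f\circ M_{i,j}^{-1}$, we have $y\in W_f$ and $F_f(y)=x$ (the only alternative permitted by the definition of a fixed point would place $x$ at an isolated removable singularity of $G_f$, which is excluded for fixed points in $\Sigma$ by Great Picard's theorem, since at such a point $F_f$ would be holomorphic with value in $\{0,1,\infty\}$ or would fail to extend at all). Because $\omega_f\colon\T_A\to W_f$ is a covering map by Proposition~\ref{prop: 1-dim pullback maps}\eqref{it: coverings}, it is surjective; I would choose $\mu\in\omega_f^{-1}(y)$ and set $\tau:=\sigma_f(\mu)$. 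The commutativity of diagram~\eqref{fig: first fund diag} — concretely the identities $\pi_B=F_f\circ\omega_f$ and $\pi_B\circ\sigma_f=M_{i,j}\circ\omega_f$ from the proof of Proposition~\ref{prop: 1-dim pullback maps} — together with $F_f(y)=x$ and $M_{i,j}(y)=x$, then gives
\[
    \pi_B(\mu)=F_f(\omega_f(\mu))=F_f(y)=x,\qquad \pi_B(\tau)=M_{i,j}(\omega_f(\mu))=M_{i,j}(y)=x .
\]
Thus $\mu$ and $\tau$ lie in the same fibre of $\pi_B$ over $x$.

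Next I would build $\phi$. Let $\varphi_\mu$ and $\varphi_\tau$ be the representatives of $\mu$ and $\tau$ normalized on $B$, i.e.\ carrying $a_{i_1},a_{i_2},a_{i_3}$ to $0,1,\infty$; then $\varphi_\mu(a_i)=\pi_B(\mu)=x=\pi_B(\tau)=\varphi_\tau(a_i)$, so $\varphi_\mu$ and $\varphi_\tau$ agree on all of $A$. Consequently $\phi:=\varphi_\tau^{-1}\circ\varphi_\mu$ is an orientation-preserving self-homeomorphism of $S^2$ fixing $A$ pointwise, so $\phi\in\Homeo^+(S^2,A)$, and $\sigma_\phi(\tau)=[\varphi_\tau\circ\phi]=[\varphi_\mu]=\mu$ by Remark~\ref{rem: functoriality}. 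Setting $\widetilde f:=\phi\circ f$, this yields $\sigma_{\widetilde f}(\tau)=\sigma_f(\sigma_\phi(\tau))=\sigma_f(\mu)=\tau$. Since $\phi$ fixes $A\supseteq P_f$ pointwise, one checks that $\widetilde f$ is again a Thurston map with marked set $A$ (indeed $S_{\widetilde f}=S_f$ and $P_{\widetilde f}=P_f$, and $\widetilde f\in\Hurw_{f,A}$), so Proposition~\ref{prop: fixed point of sigma} shows that $\widetilde f$ is realized by a postsingularly finite holomorphic map $g$. Tracing the realization through the fixed point $\tau=[\varphi_\tau]$ (cf.\ Proposition~\ref{prop: def of sigma map}), the marked set of $g$ is $\varphi_\tau(A)=\{0,1,\infty,x\}=P$, as required.

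The heart of the matter is the first step: the fundamental diagram converts realizability of the twist $\phi\circ f$ into the existence of a point of the $\pi_B$-fibre over $x$ that $\sigma_f$ sends back into the same fibre, and this is exactly what ``$x$ is a fixed point of $G_f$'' provides once combined with surjectivity of the covering $\omega_f$. The points that need genuine care are: checking that the fixed point $y$ of the ``$g$-map'' really lies in $W_f$ (and not at a puncture or at the essential singularity of $F_f$), so that $\omega_f^{-1}(y)$ is non-empty; verifying that the normalized representatives of $\mu$ and $\tau$ agree on the \emph{entire} marked set $A$, which is what forces $\phi$ to fix $A$ pointwise; and confirming that the realizing holomorphic map genuinely carries the marked set $P=\{0,1,\infty,x\}$ rather than merely being combinatorially equivalent to $\widetilde f$ for some unspecified marking.
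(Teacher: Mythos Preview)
Your proof is correct and follows essentially the same approach as the paper's: both locate two points $\mu,\tau\in\T_A$ in the same $\pi_B$-fibre over $x$ with $\sigma_f(\mu)=\tau$, then use the $B$-normalized representatives (which agree on all of $A$) to build $\phi\in\Homeo^+(S^2,A)$ so that $\tau$ becomes a fixed point of $\sigma_{\phi\circ f}$. The only cosmetic differences are that you find $\mu$ via the surjectivity of the covering $\omega_f$ while the paper uses $\sigma_f^{-1}(\tau_1)\neq\emptyset$ directly, and that the paper exhibits the conjugacy $\psi\circ\widetilde f\circ\psi^{-1}=g$ explicitly (immediately giving the marked set $\{0,1,\infty,x\}$) whereas you invoke Proposition~\ref{prop: fixed point of sigma} and then trace the marking afterwards.
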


\begin{proof}
    Proposition \ref{prop: 1-dim pullback maps} suggests that there exist points $\tau_0$ and $\tau_1$ in the Teichm\"uller space $\T_A$ so that $\sigma_f(\tau_0) = \tau_1$ and $\pi_B(\tau_0) = \pi_B(\tau_1) = x$. Indeed, choose an arbitrary point $\tau_1$ of $\pi^{-1}_B(x)$. Since $x \in M_{i, j}(W_f)$, $\sigma_f^{-1}(\tau_1)$ is non-empty, and moreover, $\pi_B(\sigma_f^{-1}(\tau_1)) = \{x\}$. Thus, we can take $\tau_0$ to be any point of $\sigma_f^{-1}(\tau_1)$.

    Let $\tau_0 = [\varphi]$ and $\tau_1 = [\psi]$, where the representatives $\varphi, \psi \colon S^2 \to \widehat{\C}$ are chosen such that $g := \varphi \circ f \circ \psi^{-1} \colon \widehat{\C} \dto \widehat{\C}$ is holomorphic, $\varphi(a_{i_1}) = \psi(a_{i_1}) = 0$, $\varphi(a_{i_2}) = \psi(a_{i_2}) = 1$, $\varphi(a_{i_3}) = \psi(a_{i_3}) = \infty$, and $\varphi(a_i) = \psi(a_i) = x$. It is straightforward to verify that $g \colon (\widehat{\C}, P) \rto$ is a postsingularly finite holomorphic map, where $P = \varphi(A) = \psi(A) = \{0, 1, \infty, x\}$.
    
    Now, define $\phi := \psi^{-1} \circ \varphi \in \Homeo^+(S^2, A)$ and $\widetilde{f} := \phi \circ f \colon (S^2, A) \rto$. It is easy to see that $\psi \circ \widetilde{f} \circ \psi^{-1} = g$. Therefore, the Thurston map $\widetilde{f}\colon (S^2, A) \rto$ is combinatorially equivalent to~$g \colon (\widehat{\C}, P) \rto$. 
\end{proof}

\begin{proposition}\label{prop: fixed points and obstructed maps}
    Let $f \colon (S^2, A) \rto$ be a Thurston map that satisfies conditions \ref{it: set A} and~\ref{it: set B}. If $G_f$ has a repelling fixed point $x \in \{0, 1, \infty\}$, then there exists a homeomorphism $\phi \in \Homeo^+(S^2, A)$ such that the Thurston map $\widetilde{f} := \phi \circ f \colon (S^2, A) \rto$ is obstructed. Moreover, $f$ is totally unobstructed rel.\ $A$ if and only if none of the points $0$, $1$, or $\infty$ is a repelling fixed point of the map $G_f$.
\end{proposition}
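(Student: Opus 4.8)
The plan is to split the statement into the first assertion --- the existence of one obstructed map in the Hurwitz class --- and the ``Moreover'' biconditional, the latter being essentially formal once the former is in hand. If none of $0,1,\infty$ is a repelling fixed point of $G_f$, then every $\widetilde f\in\Hurw_{f,A}$ is realized: such $\widetilde f$ again satisfies \ref{it: set A}--\ref{it: set B} (these are Hurwitz invariant) and $G_{\widetilde f}=G_f$ by item~\eqref{it: hurwitz class} of Proposition~\ref{prop: 1-dim pullback maps}, so if $\widetilde f$ were obstructed the argument in the proof of Theorem~\ref{thm: A} (Theorem~\ref{thm: iteration on unit disk} applied to $\sigma_{\widetilde f}$) would produce a repelling fixed point of $G_{\widetilde f}=G_f$ in $\{0,1,\infty\}$, a contradiction. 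The reverse implication of the biconditional is exactly the first assertion applied to such a repelling fixed point, so it remains to prove the first assertion.

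For the first assertion I would imitate the construction in the proof of Proposition~\ref{prop: fixed points and realized maps}, replacing ``$\sigma_{\widetilde f}$ has a fixed point'' by ``some $\sigma_{\widetilde f}$-orbit escapes to the cusp $x$''. Put $y:=M_{i,j}^{-1}(x)\in\{0,1,\infty\}$; since $M_{i,j}$ is a M\"obius transformation and $x$ is a regular fixed point of $G_f=F_f\circ M_{i,j}^{-1}$ with $|G_f'(x)|>1$, the point $y$ is a regular point of $F_f$ with $F_f(y)=x$, so a punctured neighbourhood of $y$ lies in $W_f$ and hence a punctured disk $D_x^{*}:=D_x\setminus\{x\}$ lies in $M_{i,j}(W_f)$, where $D_x$ is small enough that $G_f|_{D_x}$ is a biholomorphism with $G_f(D_x)\supset\overline{D_x}$. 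Let $\psi:=(G_f|_{D_x})^{-1}$ be the corresponding attracting inverse branch, so $\psi(x)=x$ and $\psi(D_x)\Subset D_x$.

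Now fix $z_0\in D_x^{*}$, set $z_1:=\psi(z_0)\in D_x^{*}$, and let $H$ be the connected component of $\pi_B^{-1}(D_x^{*})$ containing a chosen lift $\tau_0$ of $z_0$; since $\pi_B|_H\colon H\to D_x^{*}$ is a covering I may pick a lift $\tau_1\in H$ of $z_1$. Because $\sigma_f\colon\T_A\to\sigma_f(\T_A)=\pi_B^{-1}(M_{i,j}(W_f))$ is onto (item~\eqref{it: holomorphicicty of sigma} of Proposition~\ref{prop: 1-dim pullback maps}) and $z_1\in M_{i,j}(W_f)$, there is $\tau_0'$ with $\sigma_f(\tau_0')=\tau_1$, and then $\pi_B(\tau_0')=G_f(\pi_B(\tau_1))=z_0$ by the commutativity $\pi_B=G_f\circ\pi_B\circ\sigma_f$. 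Since the maps $\sigma_\phi$, $\phi\in\Homeo^+(S^2,A)$, form the deck group of $\pi_B\colon\T_A\to\M_A$ (the image of the pure mapping class group) and hence act transitively on $\pi_B^{-1}(z_0)$, there is $\phi\in\Homeo^+(S^2,A)$ with $\sigma_\phi(\tau_0)=\tau_0'$; set $\widetilde f:=\phi\circ f$, so that $\sigma_{\widetilde f}(\tau_0)=\sigma_f(\sigma_\phi(\tau_0))=\tau_1$ by Remark~\ref{rem: functoriality}. Then $\pi_B\circ\sigma_{\widetilde f}|_H$ and $\psi\circ\pi_B|_H$ are both lifts of $\pi_B|_H$ through the covering $G_f$ taking the value $z_1$ at $\tau_0$, and as $\psi(D_x^{*})$ is a component of $G_f^{-1}(D_x^{*})$ on which $G_f$ is injective they coincide; hence $\pi_B(\sigma_{\widetilde f}(\tau))=\psi(\pi_B(\tau))$ for all $\tau\in H$, and $\sigma_{\widetilde f}(H)$, being connected and containing $\tau_1\in H$, lies in $H$. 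Consequently $\pi_B(\sigma_{\widetilde f}^{\circ n}(\tau_0))=\psi^{\circ n}(z_0)\to x$, so the $\sigma_{\widetilde f}$-orbit of $\tau_0$ leaves every compact subset of $\T_A$; since $\sigma_{\widetilde f}$ is distance-decreasing it can have no fixed point, and Proposition~\ref{prop: fixed point of sigma} gives that $\widetilde f$ is obstructed. (Alternatively one can feed two consecutive orbit points, together with a thin annulus around $y$ on which $F_f$ is injective, into Proposition~\ref{prop: finding levy cycles} to exhibit a weakly degenerate Levy fixed curve directly.)

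I expect the main obstacle to be the bookkeeping in the construction of $\phi$: establishing that a point of $\pi_B^{-1}(z_1)$ is the $\sigma_f$-image of a point of $\pi_B^{-1}(z_0)$, and arranging that $\sigma_{\widetilde f}(\tau_0)$ lands in the \emph{same} component $H$ of $\pi_B^{-1}(D_x^{*})$ as $\tau_0$, so that a single ``step toward the cusp'' propagates to the entire forward orbit. The remaining ingredients are either standard (the local normal form at a repelling fixed point; transitivity of the pure mapping class group action on the fibers of $\pi_B$) or already available (Theorem~\ref{thm: iteration on unit disk}, Proposition~\ref{prop: 1-dim pullback maps}, Proposition~\ref{prop: fixed point of sigma}).
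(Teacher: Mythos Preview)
Your proof is correct and takes a genuinely different route from the paper's argument for the first assertion. The ``Moreover'' direction is handled identically in both.

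For the construction of an obstructed $\widetilde f$, the paper argues metrically: it proves the claim that $d_\Sigma(y,g(y))\to 0$ as $y\to x$ for the attracting local inverse $g=(G_f|U)^{-1}$, then arranges $d_T(\tau,\sigma_{\widetilde f}(\tau))=d_\Sigma(x_0,x_1)$ to be so small that, by this claim, the \emph{only} $G_f$-preimage of $\pi_B(\sigma_{\widetilde f}^{\circ n}(\tau))$ within that distance is the one given by the attracting branch; an induction then forces $\pi_B(\sigma_{\widetilde f}^{\circ n}(\tau))=g^{\circ n}(x_0)\to x$. You instead work topologically: after placing $\tau_0$ and $\tau_1=\sigma_{\widetilde f}(\tau_0)$ in the same component $H$ of $\pi_B^{-1}(D_x^*)$, you invoke uniqueness of lifts through the covering $G_f\colon M_{i,j}(W_f)\to\Sigma$ to identify $\pi_B\circ\sigma_{\widetilde f}|_H$ with $\psi\circ\pi_B|_H$, which gives $\sigma_{\widetilde f}(H)\subset H$ and the same conclusion. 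Your approach avoids the explicit hyperbolic estimate in the paper's Claim and is arguably cleaner; the paper's metric argument, on the other hand, does not require pre-arranging that $\tau_1$ lands in the same component $H$, since the distance bound alone pins down the branch. One minor remark: your parenthetical that ``$\psi(D_x^*)$ is a component of $G_f^{-1}(D_x^*)$'' is not needed for the lifting argument---uniqueness of lifts through the covering $G_f$ already suffices once both candidate lifts are seen to land in $M_{i,j}(W_f)$.
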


\begin{proof}
    Suppose that $x \in \{0, 1, \infty\}$ is a repelling fixed point of the map $G_f$. Assume that $G_f$ is extended holomorphically to a neighborhood $x$. Let $U \subset M_{i, j}(W_f) \cup \{x\}$ be a neighborhood of $x$ where $G_f$ is injective and $\overline{U} \subset G_f(U)$. Define the local inverse branch $g\colon G_f(U) \to U$ of $G_f$ at $x$, i.e., $g := (G_f|U)^{-1}$. Note that every orbit of $g$ converges to~$x$, since $g$ is uniformly distance-decreasing with respect to the hyperbolic metric on $U$ according to Schwarz-Pick's lemma, Proposition \ref{prop: mcmullens bound}, and Remark~\ref{rem: mcmullens bound}.
    
    \begin{claim}
        The distance $d_{\Sigma}(y, g(y))$ converges to $0$ as $y \in G_f(U)$ tends to $x$.
    \end{claim}

    \begin{subproof}
        Without loss of generality, assume that $x = 0$ and $y, g(y) \in \D$. Since $x = 0$ is a repelling fixed point of the map $G_f$, by choosing $y$ sufficiently small, we can ensure that $\lambda |y| \leq |g(y)| \leq |y|$ for some $\lambda$, where $0 < \lambda < 1$. Let $p \colon \H \to \D$ be the holomorphic universal covering defined as $p(z) = \exp(iz)$ for $z \in \H$. Define $y_1 := \arg(y) - i \log(|y|) \in p^{-1}(y)$ and $y_2 := \arg(g(y)) - i \log(|g(y)|) \in p^{-1}(g(y))$. Similarly to the proof of Proposition \ref{prop: inclusion of punctured spheres}, we have
        $$
            d_{\H}(y_1, y_2) \leq \log\left(\frac{\log|g(y)|}{\log|y|}\right) + \frac{2\pi}{|\log|y||} \leq \log\left(\frac{\log|y| + \log \lambda}{\log|y|}\right) + \frac{2\pi}{|\log|y||}.
        $$
        This shows that the distance $d_{\H}(y_1, y_2)$ converges to $0$ as $y$ approaches $x$. According to Schwarz-Pick's lemma, the same holds for $d_{\D}(y, g(y))$ and $d_{\Sigma}(y, g(y))$.
    \end{subproof}

    Therefore, by making $U$ even smaller, we can assume that $d_{\Sigma}(y, g(y)) < d_{\Sigma}(y, y')$, where $y \in G_f(U)$ is any point other than $x$ and $y' \in G_f^{-1}(y)$ with $y \neq g(y)$. 
    
    Now, we choose $x_1 \in U - \{x\}$ and let $x_0 = G_f(x_1)$. Similarly to the proof of Proposition~\ref{prop: fixed points and realized maps}, there exists two points $\tau_1$ and $\tau_2$ of the Teichm\"uller space $\T_A$ so that $\sigma_f(\tau_0) = \tau_1$, $\pi_B(\tau_0) = x_0$, and $\pi_B(\tau_1) = x_1$. Since $\pi_B \colon \T_A \to \Sigma$ is a holomorphic covering map, then there also exists a point $\tau \in \T_A$ so that $d_T(\tau, \tau_1) = d_{\Sigma}(x_0, x_1)$ and $\pi_B(\tau) = x_0$. 
    
    Let $\tau = [\varphi]$ and $\tau_0 = [\psi]$, where the representatives $\varphi, \psi \colon S^2 \to \widehat{\C}$ are chosen so that $\varphi|A = \psi|A$. Define the homeomorphism $\phi := \varphi^{-1} \circ \psi \in \Homeo^+(S^2, A)$. According to Remark~\ref{rem: functoriality}, $\sigma_{\phi}(\tau) = \tau_0$ and $\sigma_{\widetilde{f}}(\tau) = \sigma_{f}(\sigma_{\phi}(\tau)) = \sigma_f(\tau_0) = \tau_1$, where $\widetilde{f} := \phi \circ f \colon (S^2, A) \rto$ is a Thurston~map.

    Schwarz-Pick's lemma, along with items \eqref{it: coverings} and \eqref{it: holomorphicicty of sigma} of Proposition \ref{prop: 1-dim pullback maps}, implies that 
    $$
    d_\Sigma(\pi_B(\sigma_{\widetilde{f}}^{\circ n}(\tau)), \pi_B(\sigma_{\widetilde{f}}^{\circ (n + 1)}(\tau))) \leq d_T(\tau, \sigma_{\widetilde{f}}(\tau)) = d_T(\tau, \tau_1) = d_\Sigma(x_0, x_1).
    $$ 
    At the same time, it follows from items (\ref{it: diagram}) and (\ref{it: hurwitz class}) of Proposition~\ref{prop: 1-dim pullback maps} that 
    $$
        \pi_B(\sigma_{\widetilde{f}}^{\circ (n + 1)}(\tau)) \in G_f^{-1}(\pi_B(\sigma_{\widetilde{f}}^{\circ n}(\tau))).
    $$
    Since $\pi_B(\tau) = x_0 \in G_f(U)$, and based on the previous assumptions, we have $\pi_B(\sigma_{\widetilde{f}}^{\circ n}(\tau)) = g^{\circ n}(x_0)$. Thus, $\pi_B(\sigma_{\widetilde{f}}^{\circ n}(\tau))$ converges to $x$ as $n \to \infty$. Given that $\sigma_{\widetilde{f}}$ is 1-Lipschitz, it cannot have a fixed point. Hence, by Proposition~\ref{prop: fixed point of sigma}, the Thurston map $\widetilde{f} \colon (S^2, A) \rto$ must be~obstructed.

    Suppose none of points $0$, $1$, or $\infty$ is a repelling fixed point of the map $G_f$. Let $\widehat{f}$ be any Thurston map Hurwitz equivalent rel.\ $A$ to $f$. Then, according to item (\ref{it: hurwitz class}) of Proposition~\ref{prop: 1-dim pullback maps}, we have $W_f = W_{\widehat{f}}$ and $G_f = G_{\widehat{f}}$. Taking into account Proposition \ref{prop: 1-dim pullback maps} and applying Theorem~\ref{thm: iteration on unit disk}, we see that $\sigma_{\widehat{f}}$ must have a fixed point. Thus, it follows from Proposition~\ref{prop: fixed point of sigma} that $\widehat{f}$ is~realized~rel.\ ~$A$. 
\end{proof}

\begin{remark}\label{rem: fixed point of g_f}
    It is clear that $G_f$ extends to a postsingularly finite holomorphic map having at most one essential singularity and a postsingular set contained within $\{0, 1, \infty\}$. Therefore, by Lemma \ref{lemm: classification of fixed point}, every fixed point of $G_f$ is either superattracting or repelling. Furthermore, the only possible superattracting fixed points are $0$, $1$, and $\infty$.
\end{remark}

Now we are ready to state and prove a slightly stronger version of Main Theorem \ref{mainthm B}.

\begin{theorem}\label{thm: B}
    Let $f \colon (S^2, A) \rto$ be a Thurston map of finite or infinite degree that satisfies conditions \ref{it: set A} and~\ref{it: set B}. Then 
    \begin{enumerate}
        \item \label{it: totally unobstructed} $f$ is totally unobstructed rel.\ $A$ if and only if there are no two points $a, b \in A$ such that $\deg(f, a) = \deg(f, b) = 1$ and $f(\{a, b\})$ equals $\{a, b\}$ or $A - \{a, b\}$;

        \item \label{it: inf obstructed} if $f$ is not totally unobstructed rel.\ $A$, then its Hurwitz class $\Hurw_{f, A}$ contains infinitely many pairwise combinatorially inequivalent obstructed Thurston maps;

        \item \label{it: inf realized} if $f$ has infinite degree, then its Hurwitz class $\Hurw_{f, A}$ contains infinitely many pairwise combinatorially inequivalent realized Thurston maps.
    \end{enumerate}
\end{theorem}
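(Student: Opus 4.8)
The plan is to reduce all three statements to the dynamics of the ``$g$-map'' $G_f = F_f \circ M_{i,j}^{-1}\colon M_{i,j}(W_f) \to \Sigma$ of Proposition~\ref{prop: 1-dim pullback maps}, exploiting that $G_f$ and $W_f$ depend only on $\Hurw_{f,A}$ (item~\eqref{it: hurwitz class}) and the dictionary furnished by Propositions~\ref{prop: fixed points and realized maps}, \ref{prop: fixed points and obstructed maps}, Remark~\ref{rem: fixed point of g_f} and Theorem~\ref{thm: A}: each fixed point of $G_f$ in $\Sigma$ produces a member of $\Hurw_{f,A}$ realized by a holomorphic map with postcritical set $\{0,1,\infty,x\}$, unique up to M\"obius conjugacy; each repelling fixed point among $\{0,1,\infty\}$ produces an obstructed member; $f$ is totally unobstructed rel.\ $A$ iff no point of $\{0,1,\infty\}$ is a repelling fixed point of $G_f$; and every fixed point of $G_f$ is repelling unless it is a superattracting point of $\{0,1,\infty\}$.

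For item~\eqref{it: totally unobstructed} I would make this dictionary explicit. The three punctures of $\Sigma$ are in canonical bijection with the three partitions of the four points of $A$ into two pairs; by item~\eqref{it: map M} of Proposition~\ref{prop: 1-dim pullback maps} the M\"obius transformation $M_{i,j}$ acts as the identity on this set of partitions, and, since $F_f = \varphi\circ f\circ\psi^{-1}$ with $\varphi$ normalising $B$ and $\psi$ normalising $C$, the cusp correspondence induced by $F_f$ (hence by $G_f$) sends the partition whose half containing $a_j$ equals $\{a_j,c\}$ to the partition whose half containing $a_i=f(a_j)$ equals $\{f(a_j),f(c)\}$, with local multiplicity $\deg(f,c)$ (and if $c$ is the essential singularity, the corresponding point of $\{0,1,\infty\}$ is an essential singularity of $G_f$, so not a fixed point). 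Consequently a point of $\{0,1,\infty\}$ is a repelling fixed point of $G_f$ exactly when the corresponding pair $\{a,b\}\ni a_j$ satisfies $f(\{a,b\})\in\{\{a,b\},\,A-\{a,b\}\}$ and $\deg(f,a)=\deg(f,b)=1$ (recall $\deg(f,a_j)=1$ always). Since ``$f$ is totally unobstructed'' does not depend on the admissible choice of $B$, one is free to quantify over all such $B$; a case analysis on the dynamical portrait of $f$ on the four points of $A$, using $|S_f|\le 3$ and condition~\ref{it: set B}, then shows that some admissible $B$ produces such a pair precisely when there exist $a,b\in A$ with $\deg(f,a)=\deg(f,b)=1$ and $f(\{a,b\})$ equal to $\{a,b\}$ or to $A-\{a,b\}$. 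This yields~\eqref{it: totally unobstructed}.

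For item~\eqref{it: inf obstructed}, assume $f$ is not totally unobstructed, so Proposition~\ref{prop: fixed points and obstructed maps} gives a repelling fixed point $x\in\{0,1,\infty\}$ of $G_f$ and an obstructed $\widetilde f = \phi_0\circ f\in\Hurw_{f,A}$; by Theorem~\ref{thm: A} it has a unique Levy fixed curve $\gamma$. I claim the maps $\widetilde f_n := T_\gamma^{\circ n}\circ\widetilde f$, $n\in\Z$, where $T_\gamma$ is the Dehn twist about $\gamma$, lie in $\Hurw_{f,A}$ (clear, since $T_\gamma\in\Homeo^+(S^2,A)$), are obstructed, and are pairwise combinatorially inequivalent. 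Passing to the universal covering $\pi_B\colon\T_A\to\Sigma$, Remark~\ref{rem: functoriality} gives $\sigma_{\widetilde f_n} = \sigma_{\widetilde f}\circ\sigma_{T_\gamma}^{\circ n}$, and since elements of $\Homeo^+(S^2,A)$ fix $A$ pointwise, each $\sigma_\phi$ is a deck transformation of $\pi_B$; in particular $\sigma_{T_\gamma}$ is the deck transformation encircling the puncture $x$ once. As $G_f$ is locally linearisable at $x$ with multiplier of modulus $>1$, in a linearising coordinate on $\pi_B^{-1}$ of a punctured neighbourhood of $x$ the map $\sigma_{\widetilde f}$ is a branch of a translation and $\sigma_{T_\gamma}$ is a unit translation, so $\sigma_{\widetilde f_n}$ acts there as translation by $c+n$ for a fixed $c$ of negative imaginary part; hence every orbit is eventually captured and escapes to $x$, so $\sigma_{\widetilde f_n}$ is fixed-point free and $\widetilde f_n$ is obstructed (with Levy fixed curve $\gamma$). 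If $\widetilde f_n\sim\widetilde f_m$, the conjugacy intertwines $\sigma_{\widetilde f_n}$ with $\sigma_{\widetilde f_m}$, and since both orbits escape only to the cusp $x$ the conjugator must fix $x$; such a conjugator acts near $x$ by an integer translation or an order-two reflection, neither of which can carry translation by $c+n$ to translation by $c+m$ unless $n=m$. So the $\widetilde f_n$ give infinitely many pairwise combinatorially inequivalent obstructed maps in $\Hurw_{f,A}$, and any one of them can serve as the starting obstructed map, proving also the assertion about arbitrary Hurwitz classes containing an obstructed map.

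Finally, for item~\eqref{it: inf realized}, suppose $f$ has infinite degree. Then by Proposition~\ref{prop: 1-dim pullback maps} the covering $F_f\colon W_f\to\Sigma$ has infinite degree and $W_f$ is a countably punctured sphere, so $G_f$ extends to a postsingularly finite transcendental self-map of $\widehat{\C}$ with postsingular set in $\{0,1,\infty\}$, and Lemma~\ref{lemm: inf many repelling fixed points} provides infinitely many repelling fixed points, all lying in $\Sigma$ by Remark~\ref{rem: fixed point of g_f}. Each such $x$ yields, by Proposition~\ref{prop: fixed points and realized maps}, a realized $\widetilde f_x\in\Hurw_{f,A}$ realized by a holomorphic $g_x$ with postcritical set $\{0,1,\infty,x\}$; since $(\phi_x\circ f)|A=f|A$ for $\phi_x\in\Homeo^+(S^2,A)$, the marked set of $\widetilde f_x$ is indeed $A$. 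It remains to check that $x\mapsto[\widetilde f_x]$ has finite fibres: if $[\widetilde f_x]=[\widetilde f_y]$ then by item~\eqref{it: unique conjugacy} of Theorem~\ref{thm: A} the maps $g_x,g_y$ are conjugate by a M\"obius transformation $M$ with $M(\{0,1,\infty,x\})=\{0,1,\infty,y\}$, and since $0,1,\infty$ are the common normalised images of $B$, $M$ must permute $\{0,1,\infty\}$, so $y=M(x)$ lies in the orbit of $x$ under the six M\"obius transformations fixing $\{0,1,\infty\}$ setwise. Hence infinitely many fixed points of $G_f$ yield infinitely many combinatorial classes of realized maps in $\Hurw_{f,A}$, proving~\eqref{it: inf realized}.

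The main obstacle I anticipate is twofold: the bookkeeping in item~\eqref{it: totally unobstructed} — translating the behaviour of $G_f$ at the three cusps into combinatorics of the portrait and verifying, across all admissible $B$, that the quantifier-free condition in the statement is correct — and the inequivalence step in item~\eqref{it: inf obstructed}, where one must argue precisely that no conjugacy in $\Hurw_{f,A}$ can identify two of the twisted maps, i.e.\ that the translation amount near the escaping cusp is a genuine combinatorial invariant.
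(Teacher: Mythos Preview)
Your overall architecture is the paper's: reduce everything to the map $G_f$, use Proposition~\ref{prop: fixed points and obstructed maps} for the totally-unobstructed criterion, Proposition~\ref{prop: fixed points and realized maps} plus Lemma~\ref{lemm: inf many repelling fixed points} for infinitely many realized maps, and twist an obstructed map by powers of $T_\gamma$ for infinitely many obstructed ones. Items~\eqref{it: totally unobstructed} and~\eqref{it: inf realized} match the paper closely; the paper simply carries out the cusp bookkeeping of item~\eqref{it: totally unobstructed} explicitly (fixing $B=\{a_1,a_2,a_3\}$ so $i=4$ and running through $j\in\{1,2,3,4\}$), rather than phrasing it via partitions. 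In item~\eqref{it: inf realized} your claim that the conjugating M\"obius map $M$ must permute $\{0,1,\infty\}$ is not justified: the combinatorial equivalence need not respect $B$, so $M$ need only satisfy $\{0,1,\infty\}\subset M(\{0,1,\infty,x_1\})$. The paper therefore gets at most $24$ rather than $6$, but either bound suffices.

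The genuine divergence is in item~\eqref{it: inf obstructed}, and your route has two gaps. First, to show each $\widetilde f_n=T_\gamma^{\,n}\circ\widetilde f$ is obstructed you pass to a linearising coordinate at the repelling cusp $x$ and assert that $\sigma_{T_\gamma}$ is the deck transformation encircling $x$. That needs the fact that the Levy fixed curve $\gamma$ separates $A$ according to the partition labelled by $x$; this is true (it comes out of the construction in Theorem~\ref{thm: A}) but you have not argued it. More to the point, none of this is needed: since $T_\gamma(\gamma)=\gamma$, the curve $\gamma$ is still a Levy fixed curve for $\widetilde f_n$ (use the same $\widetilde\gamma\subset f^{-1}(\gamma)$), so $\widetilde f_n$ is obstructed by Proposition~\ref{prop: levy cycles are obstructions}. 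Second, your inequivalence argument (``an integer translation or an order-two reflection'') is vague, and does not visibly handle conjugators $\phi_1$ that permute $A$ nontrivially. The paper's argument is cleaner and entirely combinatorial: if $\phi_1\circ\widetilde f_m\circ\phi_2^{-1}=\widetilde f_n$ with $\phi_1\simeq\phi_2$, then $\phi_1(\gamma)$ is a Levy fixed curve for $\widetilde f_n$, hence homotopic to $\gamma$ by the uniqueness in Theorem~\ref{thm: A}; so $\phi_1\simeq T_\gamma^{\,k}$. Since $\gamma$ is a Levy fixed curve one can isotope $\widetilde f$ to be the identity on an annular neighbourhood of $\gamma$, giving $T_\gamma\circ\widetilde f\simeq\widetilde f\circ T_\gamma$ rel.\ $A$, and then $\widetilde f\simeq\widetilde f\circ T_\gamma^{\,m-n}$; finally the rigidity observation that $\widetilde f\simeq\widetilde f\circ\phi$ forces $\phi\simeq\id$ (lift to a holomorphic model and note the induced $\psi$ is a M\"obius map fixing four points) yields $m=n$. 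I recommend replacing your linearisation argument with this line.
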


\begin{proof}
    Without loss of generality, we assume that $A$ is indexed so that $B = \{a_1, a_2, a_3\}$, and therefore, $i = 4$ (see the beginning of Section \ref{sec: thurston theory}). We then analyze four different cases based on the value of $j, 1 \leq j \leq 4$, to find out when one of the points $0$, $1$, or $\infty$ is a repelling fixed point of the map $G_f$ (this analysis will be needed to apply Proposition \ref{prop: fixed points and obstructed maps}). We also recall that $\deg(f, a_j) = 1$ and $f(a_j) = a_i$.
    \begin{itemize}
        \item For the case $j = 1$, we have $M_{4, 1}(z) = 1/z$. Therefore,
        $$
            G_f(0) = F_f(\infty) = (\varphi \circ f \circ \psi^{-1})(\infty) = \varphi(f(a_4)).
        $$
        This means that $0$ is a fixed point of $G_f$ is and only if $f(a_4) = \varphi^{-1}(0) = a_1$. Furthermore, according to Remark \ref{rem: fixed point of g_f}, $0$ is a repelling fixed point of $G_f$ if and only if $f(a_4) = a_1$ and $a_4$ is a regular point of $f$. Similarly, $1$ is a repelling fixed point of~$G_f$ if and only if $f(a_3) = a_2$ and $\deg(f, a_3) = 1$. Lastly, $\infty$ is a repelling fixed point of the map~$G_f$ if and only if $f(a_2) = a_3$ and $\deg(f, a_2) = 1$. 

        \item For the case $j = 2$, we have $M_{4, 2}(z) = (z - 1) / z$. Similarly to the previous case, one of the points $0$, $1$, and $\infty$ is a repelling fixed point of $G_f$ if and only if $f(a_3) = a_1$ and $\deg(f, a_3) = 1$, or $f(a_4) = a_2$ and $\deg(f, a_4) = 1$, or $f(a_1) = a_3$ and $\deg(f, a_1) = 1$.

        \item For the case $j = 3$, we have $M_{4, 3}(z) = 1 - z$. Here, one of the points $0$, $1$, and $\infty$ is a repelling fixed point of $G_f$ if and only if $f(a_2) = a_1$ and $\deg(f, a_2) = 1$, or $f(a_1) = a_2$ and $\deg(f, a_1) = 1$, or $f(a_4) = a_3$ and $\deg(f, a_4) = 1$.

        \item For the case $j = 4$, we have $M_{4, 4} = \id_{\widehat{\C}}$. In this case, one of the points $0$, $1$, and $\infty$ is a repelling fixed point of $G_f$ if and only if $f(a_k) = a_k$ and $\deg(f, a_k) = 1$ for some $k = 1, 2, 3$.
    \end{itemize}

    Summarizing the calculations above and applying Proposition \ref{prop: fixed points and obstructed maps}, we obtain item (\ref{it: totally unobstructed}).

    To establish item (\ref{it: inf obstructed}), we largely follow the approach used in the proof of \cite[Theorem~9.2(V)]{Pullback_invariants}. Suppose that $f$ is not totally unobstructed rel.\ $A$, i.e., there exists an obstructed Thurston map in $\Hurw_{f, A}$. Without loss of generality, we assume that this map is $f \colon (S^2, A) \rto$. Theorem \ref{thm: A} shows that there exists a Levy fixed curve $\gamma$ for $f \colon (S^2, A) \rto$. Define $f_n := T_{\gamma}^{\circ n} \circ f \colon (S^2, A) \rto$, where $n \in \Z$ and $T_\gamma \in \Homeo^+(S^2, A)$ is the \textit{Dehn twist} about a curve~$\gamma$. Clearly, each Thurston map $f_n$ has a Levy fixed curve $\gamma$, and therefore, is obstructed rel.\ $A$ by Proposition \ref{prop: levy cycles are obstructions}. We will show that these Thurston maps are pairwise combinatorially inequivalent rel.\ $A$. 
    
    Suppose the contrary. Then there exist two homeomorphisms $\phi_1, \phi_2 \in \Homeo^+(S^2)$ such that $\phi_1(A) = \phi_2(A) = A$, $\phi_1$ is isotopic rel.\ $A$ to $\phi_2$, and $f_n = \phi_1 \circ f_m \circ \phi_2^{-1}$ for some $m \neq n$. 
    Therefore, $f_n\colon (S^2, A) \rto$ has $\phi_1(\gamma)$ as a Levy fixed curve. However, Theorem \ref{thm: A} states that this Levy fixed curve is unique up to homotopy in $S^2 - A$. This implies that $\gamma$ and $\phi_1(\gamma)$ are homotopic in $S^2 - A$, and thus, $\phi_1$ and $\phi_2$ are isotopic rel.\ $A$ to $T_\gamma^{\circ k}$ for some $k \in \Z$. 

    One can easily see that $f$ commutes with the Dehn twist $T_\gamma$ up to isotopy rel.\ $A$, meaning $T_\gamma \circ f$ is isotopic rel.\ $A$ to $f \circ T_\gamma$. Indeed, up to isotopy rel.\ $A$, we can assume that $f$ is the identity on a certain annulus in $S^2 - A$ with a core curve $\gamma$. Considering the previous discussion, we conclude that $f$ is isotopic rel.\ $A$ to $f \circ T_\gamma^{\circ (m - n)}$. The following claim proves that it is not possible, and item \eqref{it: inf obstructed} follows.

    \begin{claim}
        Suppose that $f$ is isotopic rel.\ $A$ to $f \circ \phi$, where $\phi \in \Homeo^+(S^2, A)$. Then $\phi$ is isotopic rel.\ $A$ to $\id_{S^2}$.
    \end{claim}

    \begin{subproof}
        According to Definition \ref{def: isotopy of thurston maps}, we can assume without loss of generality that $f = f \circ~\phi$. There exist orientation-preserving homeomorphisms $\varphi, \psi \colon \widehat{\C} \to S^2$ such that the map $g := \varphi \circ f \circ \psi^{-1} \colon \widehat{\C} \dto \widehat{\C}$ is holomorphic. One can easily check that $g = g \circ h$, where $h := \psi \circ \phi \circ \psi^{-1} \colon \widehat{\C} \to \widehat{\C}$. Since $h$ must be a M\"obius transformation and $h$ fixes the points of the set $\psi(A)$, it follows that $h = \id_{\widehat{\C}}$. Thus, $\phi$ is also the identity map, proving the claim.
    \end{subproof}

    Lemma \ref{lemm: inf many repelling fixed points} implies that the map $G_f$ has infinitely many fixed points when the map $f$ is transcendental. According to Proposition \ref{prop: fixed points and realized maps}, every such fixed point, apart from 0, 1, and $\infty$, corresponds to a realized Thurston map in $\Hurw_{f, A}$. However, some of these maps might be combinatorially equivalent rel.\ $A$. Nevertheless, we will show that only finitely many of them can be pairwise combinatorially equivalent rel.\ $A$.
    
    Consider two Thurston maps $f_1$ and $f_2$ realized rel.\ $A$ by postsingularly finite holomorphic maps $g_1 \colon (\widehat{\C}, P_1) \rto$ and $g_2 \colon (\widehat{\C}, P_2) \rto$, respectively, where $P_1 = \{0, 1, \infty, x_1\}$ and $P_2 = \{0, 1, \infty, x_2\}$, with $x_1$ and $x_2$ being distinct fixed points of the map $G_f$. If $f_1$ and~$f_2$ are combinatorially equivalent rel.\ $A$, then $f_1$ is realized rel.\ $A$ by both $g_1 \colon (\widehat{\C}, P_1) \rto$ and $g_2 \colon (\widehat{\C}, P_2) \rto$. By item \eqref{it: unique conjugacy} of Theorem~\ref{thm: A}, there exists of a M\"obius transformation $M$ such that $M \circ g_1 = g_2 \circ M$ and $M(P_1) = P_2$. In particular, $M$ is not the identity map and $\{0, 1, \infty\} \subset M(\{0, 1, \infty, x_1\})$. Since a M\"obius transformation is uniquely determined by its values at three distinct points of $\widehat{\C}$, there can be at most 24 such Thurston maps that are pairwise combinatorially equivalent rel.\ $A$, and item \eqref{it: inf realized} follows 
\end{proof}

Let us now proceed to prove Corollary \ref{corr: intro} from Section \ref{subsubsec: hurwitz}. First of all, we recall the definition of a \textit{parameter space}.

\begin{definition}\label{def: parameter space}
    Let $g \colon \C \to \widehat{\C}$ be a non-constant meromorphic map of finite type. Then the parameter space of $g$ is defined as follows:
    \begin{align*}
        \Par(g) := \{\varphi \circ g \circ \psi \colon \C \to \widehat{\C} \text{ holomorphic } \text{for some } \varphi \in \Homeo^+(\widehat{\C}) \text{ and } \psi \in \Homeo^+(\C)\}.
    \end{align*}
\end{definition}

\begin{proof}[Proof of Corollary \ref{corr: intro}]
    Note that $g$ should have at least two singular values and, according to Great Picard's Theorem, at most two \textit{exceptional} values, i.e., points $w \in \widehat{\C}$ such that the preimage $g^{-1}(w)$ is finite. Clearly, every exceptional value is an asymptotic value of $g$.
    Therefore, by post-composing $g$ with a M\"obius transformation, we can assume that $\{0, \infty\} \subset S_g \subset \{0, 1, \infty\}$ and $g^{-1}(1)$ is infinite. By pre-composing $g$ with an affine transformation, we can assume that $g(0) = 1$ and $g(1) = 1$. Let $x, y \neq 0, 1, \infty$ be two points in $\widehat{\C}$ so that $g(x) = y$.

    Next, choose four distinct point $a$, $b$, $c$, and $d$ in $S^2$, and two orientation-preserving homeomorphisms $\varphi, \psi \colon S^2 \to \widehat{\C}$ such that:
    \begin{itemize}
        \item $\varphi(a) = 0$, $\varphi(b) = 1$, $\varphi(c) = y$, and $\varphi(d) = \infty$, and
        \item $\psi(a) = 0$, $\psi(b) = x$, $\psi(c) = 1$, and $\psi(d) = \infty$.
    \end{itemize}

    Then $f := \varphi^{-1} \circ g \circ \psi \colon S^2 \dto S^2$ is a topologically holomorphic map. Moreover, $\{a, d\} \subset S_f \subset \{a, b, d\}$, and $f(a) = b$, $f(b) = c$, and $f(c) = b$, while $d \in S_f$ is the essential singularity of the map $f$. In other words, $f$ is a Thurston map with the postsingular set $P_f = \{a, b, c, d\}$. Moreover, it is easy to see that $f$ satisfies conditions \ref{it: set A} and \ref{it: set B}. Indeed, by setting $B = \{a, b, d\}$, we have $\overline{f^{-1}(B)} \cap P_f = \{a, c, d\}$.

    Item (\ref{it: inf realized}) of Theorem \ref{thm: B} implies that the Hurwitz class $\Hurw_f$ contains infinitely many realized Thurston maps that are pairwise combinatorially inequivalent. Clearly, each of these maps is realized by a postsingularly finite map from $\Par(g)$. Obviously, these maps must be pairwise (topologically or conformally) non-conjugate, leading to the desired result.
\end{proof}

\begin{remark}\label{rem: inf many psf entire maps}
    If $g \colon \C \to \C$ is a non-constant entire map of finite type, its \textit{entire parameter space} is defined by
    \begin{align*}
    \Par_E(g) := \{\varphi \circ g \circ \psi \colon \C \to \C \text{ holomorphic } \text{for some } \varphi, \psi \in \Homeo^+(\C)\}. 
\end{align*}
    Following the proof of Corollary \ref{corr: intro}, one can show that if $g$ is a transcendental and $|S_g| \leq~3$, then $\Par_E(g)$ contains infinitely many postsingularly finite entire maps with four postsingular values that are pairwise non-conjugate.
\end{remark}

\begin{remark}\label{rem: uncountably many}
    Using the framework of \textit{line complexes} (see \cite[Section XI]{Goldberg} or \cite[Section 2.7]{our_approx}), it can be shown that there are uncountably many distinct parameter spaces. Therefore, Corollary \ref{corr: intro} implies that conditions \ref{it: set A} and \ref{it: set B} are met by uncountably many pairwise combinatorially inequivalent realized Thurston maps with four postsingular values. Furthermore, by applying item \eqref{it: totally unobstructed} of Theorem \ref{thm: A}, it is easy to verify that the Thurston maps constructed in the proof Corollary \ref{corr: intro} are not totally unobstructed if $z = 1$ is a regular point of the map $g$. This shows that the family of Thurston maps under consideration also includes uncountably many pairwise combinatorially inequivalent obstructed Thurston maps. According to Remark \ref{rem: inf many psf entire maps}, the same observations hold even if we restrict to the class of entire Thurston maps.
\end{remark}

\subsection{Examples} \label{subsec: examples} In this section, we provide examples of several families of Thurston maps that satisfy conditions \ref{it: set A} and \ref{it: set B}. We also demonstrate how the framework of Sections \ref{subsec: characterization problem} and \ref{subsec: hurwitz classes} applies to these rather concrete cases. 

\begin{example}[Exponential maps]\label{ex: exponential}
Let $f \colon S^2 \dto S^2$ be an entire Thurston map with $P_f = \{a, b, c, d\}$ and $S_f = \{a, d\}$, where $d$ is the essential singularity of $f$. We recall that Thurston maps of this type are called \textit{exponential} Thurston maps. It is easy to see that  $a$ must be an omitted value of the map $f$. In particular, $f$ has one of the following two dynamical portraits on the set $\{a, b, c\}$ as illustrated in Figure \ref{fig: exp portraits}: either the singular value $a$ has pre-period 1 and period 2, or it has pre-period 2 and period 1.

\begin{figure}[h]
\begin{minipage}{0.3\textwidth}   
    \begin{flushleft}
    \begin{tikzcd}
        a \arrow[r] & b \arrow[r] & c \arrow[l, bend right = 50, swap]
    \end{tikzcd}
    \end{flushleft}
\end{minipage}
\begin{minipage}{0.3\textwidth}
\begin{center}
    \begin{flushright}
    \begin{tikzcd}
        a \arrow[r] & b \arrow[r] & c \arrow[loop,out=30,in=-30,distance=20]
    \end{tikzcd}
    \end{flushright}
\end{center}
\end{minipage}
\caption{Possible orbit of the singular value of an exponential Thurston map with four postsingular values.}\label{fig: exp portraits}
\end{figure}
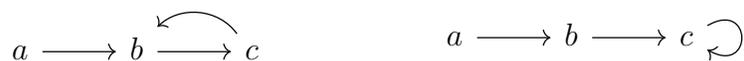

In both cases, the map $f$ satisfies properties \ref{it: set A} and \ref{it: set B}. In particular, Theorem \ref{thm: A} shows that $f$ is realized if and only if it has no Levy fixed curve, which must, in fact, be degenerate. This, in particular, provides a new proof for the more general result \cite[Theorem 2.4]{HSS} in the case of four postsingular values.

According to item (\ref{it: totally unobstructed}) of Theorem \ref{thm: B}, if the singular value $a$ has pre-period 2 and period~1, then $f$ it totally unobstructed.  However, if $a$ has pre-period~1 and period 2, the Thurston map $f$ is never totally unobstructed since $f(\{b, c\}) = \{b, c\}$ and $\deg(f, b) = \deg(f, c) = 1$ because otherwise either $b$ or $c$ would be a singular value of $f$.
Moreover, in this case the Hurwitz class $\Hurw_f$ contains infinitely many pairwise combinatorially inequivalent obstructed Thurston maps by item~\eqref{it: inf obstructed} of Theorem \ref{thm: B}. In both cases, item~(\ref{it: inf realized}) of Theorem \ref{thm: B} states that the Hurwitz class of $f$ contains infinitely many pairwise combinatorially inequivalent realized Thurston maps.

We further assume that $a = a_1$, $b = a_2$, $c = a_3$, and $d = a_4$, and adopt the notation introduced at the beginning of Section \ref{sec: thurston theory}. Our goal is to derive explicit formula for the map~$G_f$, analyze its dynamics, and observe the phenomena described in Propositions \ref{prop: fixed points and realized maps} and \ref{prop: fixed points and obstructed maps}, as well as in the proof of Theorem \ref{thm: B}.

First, we consider the case when the singular value $a$ has pre-period 1 and period 2. Let $B = \{a, b, d\} = \{a_1, a_2, a_4\}$, and then $C = \overline{f^{-1}(B)} \cap P_f = \{a, c, d\} = \{a_1, a_3, a_4\}$. In particular, here $i = 3$ and $j = 2$.

Let us compute the map $F_f$. It is evident that $F_f$ is a transcendental entire function. Moreover, $S_{F_f} = \{0, \infty\}$. By the classical theory of covering maps, $F_f(z) = \alpha \exp(\lambda z)$ for some $\alpha, \lambda \in \C^*$. Given that $F_f(0) = 1$ and $F_f(1) = 1$, it follows that $\alpha = 1$ and $\lambda = 2\pi i k, k \in \Z^*$, where $k$ is determined by the Hurwitz equivalence class of~$f$, as stated in item~\eqref{it: hurwitz class} of Proposition~\ref{prop: 1-dim pullback maps}. At the same time, $W_f = \C - \{l/k: l \in \Z\}$ and $M_{3, 2}(z) = 1/z$. Therefore, $G_f(z) = \exp(2\pi i k / z)$.

In particular, $0$ is the essential singularity of $G_f$, $1$ is a fixed repelling fixed point of $G_f$ of multiplier $2\pi i k$, and $G_f(\infty) = 1$. Moreover, by Lemma \ref{lemm: inf many repelling fixed points}, the map $G_f$ has infinitely many repelling fixed points. Thus, Propositions \ref{prop: fixed points and realized maps} and~\ref{prop: fixed points and obstructed maps} already imply that the Hurwitz class~$\Hurw_f$ contains both realized and obstructed Thurston maps.

Now, let the singular value $a$ has pre-period 2 and period 1. Let $B = \{a, c, d\} = \{a_1, a_3, a_4\}$ and then $C = \{b, c, d\} = \{a_2, a_3, a_4\}$. Here, $i = 2$ and $j = 1$.
Similarly to the previous case, we find that $F_f(z) = \exp(2\pi i k z)$ with $k \in \Z^*$, $W_f = \C - \{l/k: l \in \Z\}$, $M_{2, 1}(z) = z/(z - 1)$, and $G_f(z) = \exp(2\pi i k / (z - 1))$. In particular, $1$ is the essential singularity of $G_f$, and $G_f(0) = G_f(\infty) =  1$. Thus, none of the points $0$, $1$, or~$\infty$ is a fixed point of the map $G_f$. Therefore, Proposition~\ref{prop: fixed points and obstructed maps} implies that $f$ is indeed totally unobstructed.

\end{example}

\begin{example}[Entire maps with three singular values]\label{ex: entire}
    Let $f \colon S^2 \dto S^2$ be an entire Thurston map with the postsingular set $P_f = \{a, b, c, d\}$, where $S_f =\{a, b, d\}$ and $d$ is the essential singularity of $f$. If $f$ satisfies condition~\ref{it: set B}, then it should have one of the  three (up to relabeling) possible dynamical portraits on the set $\{a, b, c\}$ as illustrated in Figure \ref{fig: portraits}. Additionally, there are four more dynamical portraits when condition~\ref{it: set B} is not satisfied.

    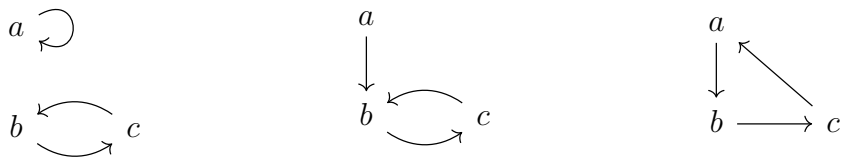
\begin{figure}[h]
    \begin{minipage}{0.225\textwidth}
        \begin{flushleft}
        	\begin{tikzcd}    
                a \arrow[loop,out=30,in=-30,distance=20,""]\\
                b \arrow[r, bend right = 35, swap] & c \arrow[l, bend right = 35, swap]
            \end{tikzcd}
        \end{flushleft}
    \end{minipage}
    \begin{minipage}{0.225\textwidth}
        \begin{center}
        \begin{tikzcd}
            a \arrow[d]\\
            b \arrow[r, bend right = 35, swap] & c \arrow[l, bend right = 35, swap]
        \end{tikzcd}
        \end{center}
    \end{minipage}
    \begin{minipage}{0.225\textwidth}
        \begin{flushright}
            \begin{tikzcd}
            a \arrow[d]\\
            b \arrow[r] & c \arrow[lu]
        \end{tikzcd}
        \end{flushright}
    \end{minipage}
    \caption{Possible dynamical portraits for an entire Thurston map with three singular and four postsingular values that satisfies condition \ref{it: set B}.}\label{fig: portraits}
    \end{figure}

    Theorem \ref{thm: A} states that a Thurston map with one of the dynamical portraits as in Figure~\ref{fig: portraits} is realized if and only if it has no weakly degenerate Levy fixed curve. Furthermore, according to item (\ref{it: totally unobstructed}) of Theorem \ref{thm: B}, $f$ is totally unobstructed for the first two dynamical portraits (from left to right) in Figure \ref{fig: portraits} if and only if $\deg(f, c) = 1$, and $f$ is always totally unobstructed for the third dynamical portrait.

    Let $a = a_1$, $b = a_2$, $c = a_3$, and $d = a_4$. Then we take $B = S_f = \{a, b, d\} = \{a_1, a_2, a_4\}$, and then $C = \{a, c, d\} = \{a_1, a_3, a_4\}$. In particular, $i=3$ and $j=2$. It can can be verified that $F_f$ is an entire function with $S_{F_f} = \{0, 1, \infty\}$. At the same time, $M_{3, 2}(z) = 1/z$, and therefore $G_f(z) = F_f(1/z)$. In particular, $0$ is the essential singularity of $G_f$. Thus, we have the following behavior of the map $G_f$ on the ``cusps'' 0 and 1 of the moduli space $\Sigma \sim \M_A$:
    \begin{itemize}
        \item for the first dynamical portrait, $G_f(\infty) = 0$ and $1$ is a fixed point of $G_f$ that, according to Lemma \ref{lemm: classification of fixed point}, is repelling if $\deg(f, c) = 1$, or superattracting otherwise;

        \item for the second dynamical portrait, $G_f(\infty) = 1$ and $1$ is a fixed point of $G_f$ that is repelling if and only if $\deg(f, c) = 1$, or superattracting otherwise;

        \item for the third dynamical portrait, $G_f(1) = 0$ and $G_f(\infty) = 1$. In particular, neither of $0$, $1$, and $\infty$ is a fixed point of the map $G_f$.
    \end{itemize}
\end{example}

\begin{example}[Non-entire examples]\label{ex: non-entire}
    Most of the observations in Example \ref{ex: entire} do not depend on the condition $f^{-1}(d) = \emptyset$, i.e., that the Thurston map $f$ is entire. Furthermore, there are more non-entire examples of Thurston maps that satisfy conditions \ref{it: set A} and \ref{it: set B}. For instance, if $f\colon S^2 \dto S^2$ is a Thurston map with $|S_f| \leq 3$, where the postsingular set $P_f = \{a, b, c, d\}$ does not contain an essential singularity (e.g., $f$ could be a finite degree map), and $f$ has one of the postsingular portraits shown in Figure \ref{fig: non entire portraits}. In particular, Theorem \ref{thm: A} provides a novel proof of celebrated Thurston's characterization theorem \cite[Theorem 1]{DH_Th_char} for a specific family of finite degree Thurston maps with four postcritical~values.

    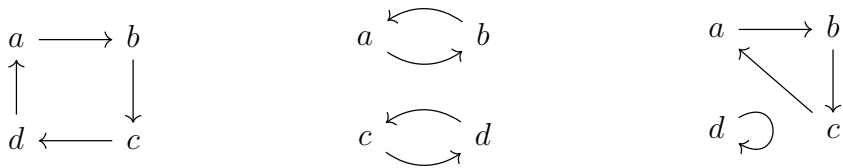
\begin{figure}[h]
    \begin{minipage}{0.225\textwidth}
        \begin{flushleft}
        	\begin{tikzcd}
                a \arrow[r] & b \arrow[d]\\
                d \arrow[u] & c \arrow[l]
            \end{tikzcd}
        \end{flushleft}
    \end{minipage}
    \begin{minipage}{0.225\textwidth}
        \begin{center}
        \begin{tikzcd}
            a \arrow[r, bend right = 35, swap] & b \arrow[l, bend right = 35, swap]\\
            c \arrow[r, bend right = 35, swap] & d \arrow[l, bend right = 35, swap]
    \end{tikzcd}
        \end{center}
    \end{minipage}
    \begin{minipage}{0.225\textwidth}
        \begin{flushright}
            \begin{tikzcd}
            a \arrow[r] & b \arrow[d]\\
            d \arrow[loop,out=30,in=-30,distance=20,""] & c \arrow[lu]
        \end{tikzcd}
        \end{flushright}
    \end{minipage}
    \caption{Examples of postsingular portraits of non-entire Thurston maps that satisfy conditions \ref{it: set A} and \ref{it: set B}.}
    \label{fig: non entire portraits}
    \end{figure}

    Of course, there are more examples, e.g., a Thurston map $f$ with $P_f = \{a, b, c, d\}$ satisfies condition \ref{it: set B} if $S_f = \{a, b, c\}$, $d$ is the essential singularity of the map $f$, $f(a) = d$, and $b$ and $c$ form a 2-cycle for the map $f$.
\end{example}

\begin{example}[Maps with three postsingular values]\label{ex: three postsingular values}
    Suppose that $f \colon (S^2, A) \rto$ is a Thurston map such that $|A| = 4$ and $|P_f| \leq 3$. It is easy to see that if there exists a marked point $a \in A - P_f$ that is not periodic (i.e., it is either pre-periodic or lands to the essential singularity of $f$ under the iteration), then the pullback map $\sigma_{f}$ is constant by Proposition \ref{prop: dependence} and the Thurston map $f \colon (S^2, A) \rto$ is realized, as noted in Remark \ref{rem: constant sigma map}.

    On the other hand, if every marked point $a \in A - P_f$ is periodic, then conditions~\ref{it: set A} and~\ref{it: set B} are clearly satisfied. For instance, if $|P_f| = 3$, we can simply take $B = P_f$.
\end{example}

\appendix

\section{Few facts about dynamics of meromorphic maps}

We require the following two results regarding the dynamics of meromorphic maps. Although these results are mostly folklore, we provide short proofs for the completeness.

\begin{lemma}\label{lemm: classification of fixed point}
    Every postsingularly finite meromorphic function has only finitely many superattracting periodic orbits, and all other periodic orbits are repelling.
\end{lemma}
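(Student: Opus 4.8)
The plan is to split the periodic orbits according to the modulus of their multiplier and to rule out every type except the repelling ones, while bounding the number of superattracting ones. Throughout, write $P := P_f$; since $f$ is psf the set $P$ is finite, so $\overline{P_f}=P_f$, and each singular value has a finite (hence eventually periodic) forward orbit. Two classical ingredients will do the work, used in the form valid for meromorphic maps: \emph{Fatou's theorem} --- the immediate basin of every attracting or parabolic periodic orbit contains a singular value, and in fact one whose whole forward orbit stays in that basin and accumulates on the orbit (see \cite[Theorem~8.6]{Milnor} and the surrounding discussion for the rational case; the transcendental meromorphic version is standard) --- and the fact that the boundary of every Siegel disk or Herman ring, and every Cremer periodic point, lies in $\overline{P_f}$.

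The counting and the rotation/Cremer cases are immediate from finiteness of $P$. A superattracting orbit of period $q$ contains a critical point $c$ with $f^{\circ q}(c)=c$; its orbit equals the forward orbit of the critical value $f(c)$, hence lies in $P_f$. Distinct superattracting orbits are disjoint, so they give pairwise disjoint nonempty periodic subsets of the finite set $P_f$; there are only finitely many. If $f$ had a Siegel disk or Herman ring, its boundary would be an infinite set --- a nondegenerate continuum, or a union of two of them --- contained in the finite set $\overline{P_f}$, which is absurd; and a Cremer periodic point would be an accumulation point of $\overline{P_f}$, which a finite set does not possess.

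It remains to exclude parabolic orbits and attracting-but-not-superattracting orbits, and this is where I would use Fatou's theorem in earnest. For a parabolic orbit, a singular value lying in its immediate parabolic basin has a forward orbit confined to the attracting petals; these contain no periodic points, so that orbit converges to the cycle without ever reaching it, hence is infinite, contradicting that $f$ is psf. For an attracting orbit $C$ of period $q$ with multiplier $\lambda$, $0<|\lambda|<1$, no point of $C$ is critical; writing $U_0$ for the immediate basin component through a point of $C$, the return map $f^{\circ q}\colon U_0\to U_0$ is surjective but cannot be injective, since an injective Koenigs linearizer at the attracting point would force $\lambda U_0'=U_0'$ for its image $U_0'$ and hence $U_0\cong\C$, impossible because $U_0$ omits the infinite Julia set. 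Therefore $U_0$ carries a singular value of $f^{\circ q}$; tracing it back through $f$ and invoking the sharper form of Fatou's theorem yields a singular value in the cyclic basin whose forward orbit accumulates on $C$ without meeting it, once more contradicting psf. (Alternatively, one may quote that a psf meromorphic map carries a subhyperbolic orbifold metric, strictly expanding off a neighbourhood of $P_f$, which forces every non-repelling periodic orbit to be superattracting.)

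The main obstacle is precisely this last implication: that the singular value associated with a non-superattracting attracting cycle can be chosen with an \emph{infinite} forward orbit, i.e.\ that the singular orbit in such a basin is genuinely ``free'' rather than eventually periodic. This is the meromorphic counterpart of the classical statement that postsingularly finite maps have only superattracting and repelling cycles, and a fully self-contained argument would essentially recreate the subhyperbolicity of psf maps; I would therefore present it as a cited black box from the literature rather than reprove it, and keep the rest of the write-up at the level sketched above.
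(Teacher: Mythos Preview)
Your proposal is correct and follows essentially the same route as the paper: partition periodic orbits by multiplier type, bound superattracting cycles by inclusion in $P_f$, and eliminate attracting, parabolic, Siegel, and Cremer cases via Fatou-type theorems asserting that the relevant basin or boundary must contain infinitely many postsingular points. The paper simply cites \cite[Theorem~7]{Bergweiler}, \cite[Theorem~7.2]{Milnor}, and \cite{Adam_Thesis} for each of these steps rather than sketching the mechanism, so your caution about the attracting-but-not-superattracting case---that one needs a singular value whose orbit genuinely never lands on the cycle---is exactly the content hidden in those citations (it comes from the K\"onigs linearization: the controlling singular value has nonzero K\"onigs coordinate); your mention of Herman rings is harmless but unnecessary, since no periodic point lies in one.
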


\begin{proof}
    Let $g \colon \C \to \widehat{\C}$ be a postsingularly finite meromorphic function. It is evident that $g$ can have only a finite number of superattracting periodic orbits, as the points of each such orbit belong to the postsingular~set of~$g$.
    
    Now, consider a periodic point $z \in \C$ of $g$. If $z$ is attracting (but not superattracting), then according to \cite[Theorem 7]{Bergweiler}, the corresponding cycle of immediate attracting basins contains a singular value $w \in S_g$ that has an infinite orbit, leading to a contradiction. Therefore, if $z$ is in the Fatou set of $g$, then it is the center of a cycle of Siegel disks $U_1, U_2, \dots, U_k$, and postsingular values of~$g$ are dense in $\partial U_i$ for each $i = 1, 2, \dots, k$ \cite[Theorem 7]{Bergweiler}. This implies that the postsingular set of $g$ would be infinite. If $z$ is in the Julia set of $g$, then it is either a Cremer periodic point or it lies on the boundary of a cycle of parabolic basins \cite[Theorem 7.2]{Milnor}. In both cases, $z$ is an accumulation point of the postsingular values of $g$ (see \cite[Theorem 7]{Bergweiler} and \cite[Proposition 16]{Adam_Thesis}; see also \cite[Lemma~72]{Adam_Thesis}). Thus, $z$ is either a superattracting or repelling periodic point of the function~$g$.
\end{proof}

The next result that we require states that a transcendental meromorphic function of finite type has infinitely many repelling fixed points. This was established in the more general context of finite type maps in \cite[Proposition~14]{Adam_Thesis}. Furthermore, in the paper~\cite{fixed_points}, it was shown that the same is true for transcendental meromorphic functions of \textit{bounded type} (i.e., having bounded singular set) under the assumption the $\infty$ is a \textit{logarithmic singularity} of the considered function. In the following lemma, we show that this assumption can be removed and, in fact, the result holds for an arbitrary transcendental meromorphic function~of~bounded~type.

\begin{lemma}\label{lemm: inf many repelling fixed points}
    Every transcendental meromorphic function of bounded type has infinitely many repelling fixed points.
\end{lemma}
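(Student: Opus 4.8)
The plan is to split into two cases according to whether or not $\infty$ is a logarithmic singularity of $f$. Fix $R_0>\sup\{|w|:w\in S_f\cap\C\}$, which is finite since $f$ has bounded type, and for $R\ge R_0$ set $V_R:=\{w\in\widehat{\C}:|w|>R\}$ and $W_R:=f^{-1}(V_R)\subset\C$. As $V_R\setminus\{\infty\}$ carries no singular value of $f$, the map $f\colon W_R\setminus f^{-1}(\infty)\to V_R\setminus\{\infty\}$ is a covering map; since a covering of a punctured disk is either the universal (exponential) cover or a cyclic cover $\zeta\mapsto\zeta^{d}$, each component $U$ of $W_R$ is either (i) an unbounded logarithmic tract, containing no pole, on which $f$ is a universal cover of $V_R\setminus\{\infty\}$, or (ii) a bounded simply connected domain containing exactly one pole $p$ of $f$, on which $f$ is proper of degree $\operatorname{ord}_{p}f$. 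If some component is of type (i), then $\infty$ is a logarithmic singularity of $f$ and the lemma follows at once from \cite{fixed_points}. So assume henceforth that no component is of type (i). If $f$ had only finitely many poles, then $W_{R_0}$ would have only finitely many components (each bounded component contains a pole), their union would be bounded, $f$ would be bounded near $\infty$, hence rational, contradicting that $f$ is transcendental. Therefore $f$ has infinitely many poles $p_1,p_2,\dots$ with $p_k\to\infty$.

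Next I would locate a fixed point near each sufficiently large pole. Only finitely many components of $W_{R_0}$ meet the compact disk $\overline{\mathbb{D}}_{R_0}$ (on it $f$ attains $\infty$ only at the finitely many poles it contains), so for all but finitely many $k$ the component $U_k\ni p_k$ satisfies $\overline{U_k}\subset\{|z|>R_0\}$, and these $U_k$ are pairwise distinct. For such $k$ one has $|f(z)|=R_0<|z|$ on $\partial U_k$, so the argument principle, comparing $f(z)-z$ with $-z$ on $\partial U_k$ (the latter function has no zeros and no poles in $U_k$, since $0\notin U_k$), shows that $f(z)-z$ has exactly $\operatorname{ord}_{p_k}f\ge 1$ zeros in $U_k$. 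Pick a fixed point $z_k^{*}\in U_k$. Since $|z_k^{*}|>R_0>\sup|S_f|$ and all critical values of $f$ lie in $S_f$, the point $z_k^{*}$ is neither a critical point of $f$ nor a superattracting fixed point of $f$.

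Finally, I would check that $z_k^{*}$ is repelling for all large $k$. Because $f\colon U_k\setminus\{p_k\}\to V_{R_0}\setminus\{\infty\}$ is a finite covering, it is a local isometry for the hyperbolic metrics, so, using $f(z_k^{*})=z_k^{*}$ together with $\rho_{U_k}(z)\ge 2/\operatorname{dist}(z,\partial U_k)$ and $\rho_{\{|w|>R_0\}}(w)=\big(|w|\log(|w|/R_0)\big)^{-1}$,
$$
 |f'(z_k^{*})|=\frac{\rho_{U_k\setminus\{p_k\}}(z_k^{*})}{\rho_{\{|w|>R_0\}}(z_k^{*})}\;\ge\;\frac{\rho_{U_k}(z_k^{*})}{\rho_{\{|w|>R_0\}}(z_k^{*})}\;\ge\;\tfrac12\,\frac{|z_k^{*}|\,\log(|z_k^{*}|/R_0)}{\operatorname{diam}(U_k)}.
$$
Thus it suffices to know that $\operatorname{diam}(U_k)\le\tfrac12|p_k|$ along infinitely many $k$, for then $|z_k^{*}|\ge\tfrac12|p_k|\to\infty$ forces $|f'(z_k^{*})|\to\infty$. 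Establishing this control on the sizes of the pole-components is, I expect, the main technical point of the proof; it is exactly where boundedness of $S_f$ is used a second time, and it can be obtained by a rescaling/normality argument: if $\operatorname{diam}(U_k)>\tfrac12|p_k|$ for all large $k$, then rescale by $|p_k|$; since $f$ omits the fixed disk $\overline{\mathbb{D}}_{R_0}$ on each $U_k$, the functions $z\mapsto f(|p_k|z)$ omit $\overline{\mathbb{D}}_{R_0}$ and hence form a normal family, and a nonconstant limit would be a holomorphic map omitting a disk yet arising as a limit of $f$ restricted to arbitrarily far scales near $\infty$, which is incompatible with $S_f$ being bounded. Granting this estimate, the points $z_k^{*}$, lying in pairwise disjoint domains $U_k$, yield infinitely many repelling fixed points of $f$, which completes the proof.
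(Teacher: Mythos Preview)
Your overall architecture matches the paper's proof: reduce to the non-logarithmic case via \cite{fixed_points}, observe that all but finitely many pole-components $U_k$ of $f^{-1}(V_{R_0})$ lie compactly inside $V_{R_0}$, and produce a repelling fixed point of $f$ inside each such component. The divergence occurs at the last step, and there your argument has a genuine gap.

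The problematic step is the diameter bound $\operatorname{diam}(U_k)\le\tfrac12|p_k|$. You yourself flag it as ``the main technical point'' and then sketch a normality argument which, as written, does not go through. After rescaling by $|p_k|$, the domains $\widetilde U_k:=U_k/|p_k|$ need not share any common open set on which the family $z\mapsto f(|p_k|z)$ could be normal: even with $\operatorname{diam}(\widetilde U_k)>\tfrac12$ and $p_k/|p_k|\in\widetilde U_k$, the $\widetilde U_k$ may be long, thin, and pairwise disjoint. Moreover, the asserted contradiction ``incompatible with $S_f$ being bounded'' is not argued at all; it is unclear why a nonconstant subsequential limit omitting a disk would violate bounded type. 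So the repellingness of $z_k^{*}$ remains unproven.

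The paper bypasses this difficulty entirely by working with inverse branches rather than derivative estimates. Given a component $U$ with $\overline U\subset D$ (where $D=V_{R_0}$), choose a Jordan arc $\alpha\subset\overline D$ from $\infty$ to $\partial D$ that avoids $\overline U$. Then $D\setminus\alpha$ is simply connected, $U\subset D\setminus\alpha$, and the preimage $g^{-1}(\alpha)\cap U$ cuts $U$ into $d=\operatorname{ord}_p f$ simply connected pieces $U_1,\dots,U_d$, each mapped biholomorphically onto $D\setminus\alpha$. Since $\overline{U_i}\subset\overline U\subset D\setminus\alpha$, the inverse branch $(g|U_i)^{-1}\colon D\setminus\alpha\to U_i\subset D\setminus\alpha$ is a strict hyperbolic contraction by Schwarz--Pick (or the paper's Proposition~\ref{prop: mcmullens bound}). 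The Banach fixed point theorem yields a fixed point in each $U_i$ which is attracting for the inverse branch, hence automatically repelling for $g$. No estimate on $\operatorname{diam}(U_k)$ is ever needed. If you want to salvage your route, the cleanest fix is to replace the argument-principle/derivative-estimate combination by this slit-and-invert trick.
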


\begin{proof}
    Let $g \colon \C \to \widehat{\C}$ be a transcendental meromorphic function of bounded type and $D \subset \widehat{\C}$ be an open Jordan region containing $\infty$ such that $S_g \cap \overline{D} = \{\infty\} \cap \overline{D}$. If $f^{-1}(D)$ has a connected component that is unbounded (in $\C$), then the result follows directly from \cite{fixed_points}. Now, suppose that every connected component of $f^{-1}(D)$ is bounded. In this case, $f^{-1}(D)$ has infinitely many connected components, and all but finitely many of them are compactly contained in $D$. Let $U$ be one such component, i.e., $U$ is a connected component of $g^{-1}(D)$ such that $\overline{U} \subset D$. Let $z$ be a unique pole of $g$ in $U$ and let $d := \deg(g, z)$. Note that $U$ is an open Jordan region and $g|U - \{z\} \colon U - \{z\} \to D - \{\infty\}$ is~covering~map~of~degree~$d$.
    
    Consider a Jordan arc $\alpha \subset \overline{D}$ connecting $\infty$ with a point on $\partial D$, with the conditions that $|\alpha \cap \partial D| = 1$ and $\alpha \cap \overline{U} = \emptyset$. Then $g^{-1}(\alpha)$ subdivides $U$ in $d$ simply connected domains $U_1, U_2, \dots, U_d$. Furthermore, the restriction $g|U_i \colon U_i \to D - \alpha$ is a biholomorphism. Proposition \ref{prop: mcmullens bound} and Remark \ref{rem: mcmullens bound} imply that the inverse $(g|U_i)^{-1}$ is uniformly distance-decreasing with respect to the hyperbolic metric on $D - \alpha$, because $U_i = g(D - \alpha)$ is compactly contained in $D - \alpha$. Therefore, by the Banach fixed point theorem, $g$ has a fixed point in each $U_i$ for $i = 1, 2, \dots, d$. These fixed points are attracting for $(g|U_i)^{-1}$ and thus repelling for the map $g$. By applying the same argument to every connected component of $g^{-1}(D)$ that is compactly contained in $D$, we conclude that the map $g$ has infinitely many~repelling~fixed~points.
\end{proof}

\bibliographystyle{alpha}
\bibliography{lib.bib}

\end{document}